\def\O{\Omega}
\def\l{\lambda}
\def\E{E}
\def\VK{V^{\E}}
\renewcommand\sp{\mathop{\mathrm{Sp}}\nolimits}
\newcommand\bv{\boldsymbol{v}}
\def\Vh{V_h}
\def\PiK{\Pi^{\E}}
\def\CE{\mathcal{E}}
\renewcommand\sp{\mathop{\mathrm{sp}}\nolimits}
\def\CB{\mathscr{B}}
\def\CT{\mathcal{T}}
\def\O{\Omega}
\def\P{\mathbb{P}}
\def\PiK{\Pi^{\nabla,E}}
\def\Pio{\Pi^{E}}
\def\Vh{V_h}
\def\VK{V^{E}_h}
\def\WK{\widetilde{V}_h^E}
\def\l{\lambda}
\def\CT{{\mathcal T}}
\newcommand\R{\mathbb{R}}
\renewcommand\L{\mathrm{L}}
\renewcommand\O{\Omega}
\renewcommand\sp{\mathop{\mathrm{sp}}\nolimits}
\newcommand{\vertiii}[1]{{\left\vert\kern-0.25ex\left\vert\kern-0.25ex\left\vert #1 
    \right\vert\kern-0.25ex\right\vert\kern-0.25ex\right\vert}}
\crefname{hypothesis}{Hypothesis}{Hypotheses}
\title{A priori and a posteriori error analysis for a VEM discretization of the convection-diffusion eigenvalue problem\thanks{Submitted to the editors DATE.
\funding{DA and FL were partially supported by DIUBB through project 2120173 GI/C Universidad del B\'io-B\'io. FL was partially supported by 
ANID-Chile through FONDECYT project 11200529 (Chile). GR was supported by Universidad de Los Lagos Regular R02/21 and ANID-Chile through FONDECYT project 1231619 (Chile).
}}}
\author{Danilo Amigo\thanks{GIMNAP-Departamento de Matem\'atica, Universidad del B\'io-B\'io, Casilla 5-C, Concepci\'on, Chile. 
\texttt{danilo.amigo2101@alumnos.ubiobio.cl}.}
\and
Felipe Lepe\thanks{GIMNAP-Departamento de Matem\'atica, Universidad del B\'io-B\'io, Casilla 5-C, Concepci\'on, Chile. 
\texttt{flepe@ubiobio.cl}.}
\and
Gonzalo Rivera\thanks{Departamento de Ciencias Exactas, Universidad de Los Lagos, Osorno, Chile.
\texttt{gonzalo.rivera@ulagos.cl}}}
\begin{document}

\nolinenumbers
\maketitle

\begin{abstract}
In this paper we propose and analyze a  virtual element method for the two dimensional non-symmetric diffusion-convection eigenvalue problem in order to derive a priori and  a posteriori error estimates. Under the classic assumptions of the meshes, and with the aid of the classic theory of compact operators, we prove error estimates for the eigenvalues and eigenfunctions. Also, we develop an a posteriori error estimator  which, in one hand, results to be  reliable and on the other, with standard bubble functions arguments, also results to be efficient. We test our method on domains where the complex eigenfunctions are not sufficiently regular, in order to assess the performance of the estimator that we compare with the uniform refinement given by the a priori analysis. \end{abstract}

\begin{keywords}
virtual element methods, convection-diffusion equations, eigenvalue problems, a priori error estimates, a posteriori error analysis, polygonal meshes.

\end{keywords}

\begin{AMS}
49K20, 
49M25, 
65N12, 
65N15,  
65N25, 
65N50. 
\end{AMS}

\section{Introduction}
\label{sec:introduccion}
The virtual element method (VEM) has proved through the years the efficiency and versatility on the approximation of the solutions of partial differential equations (PDEs). This has led to a number of works where  this popularity of the VEM is confirmed. We mention the book \cite{ABM2022} where recent advances on the development of the VEM are documented. In particular, the VEM has resulted to be a
good numerical strategy to approximate the eigenfunctions and eigenvalues of spectral problems arising on the continuum mechanics and related problems, where conforming and no-conforming virtual methods have emerged  such as \cite{MR3867390, MR3340705,MR3895875} and the references therein. These references show the accuracy of the VEM when the spectrum is computed and the easy way to handle with the spurious eigenvalues, where the methods result to be precisely spurious free. In this sense, the literature 
related to the a priori analysis for eigenvalue problems is very rich and the research on this  is in constant progress. However, the literature available on the analysis of a posteriori error estimators for eigenvalue problems using VEM is, for the best of the author's knowledge,  a topic that has not been sufficiently studied. On this subject, we recall the works  \cite{adak2022vem,MR4550402,MR4050542,MR3715326,MR4497827} 
 where for different eigenvalue problems, adaptive strategies have been considered. Clearly the literature for a posteriori error analysis of eigenvalue problems must be taken into consideration.

It is well known that there exist different manners for which the eigenfunctions of the eigenvalue problems may result to be non sufficiently smooth. This can be due to the presence of certain parameters that may affect the regularity of the eigenfunctions (we can think for instance on the elasticity eigenvalue problems and the influence of the Poisson ratio when it is close to $1/2$) or the classical geometric singularities on the domains, particularly the non-convexity of some domains or the presence of fractures.  This is precisely the motivation for the development of reliable and efficient a posteriori error indicators, in order to recover the optimal order of convergence through adaptive refinements in the regions where the singularities arise. In particular, our contribution is focused on a priori and a posteriori error analysis for non-symmetric eigenvalue problems based on the VEM. This nature of eigenvalue problems demand a more sharp analysis, since not only the primal eigenvalue problem deserves attention, but also the dual eigenvalue problem. In \cite{MR3212379} there is a complete treatment, from the finite element point of view, to analyze a posteriori error estimators for a non-symmetric eigenvalue problem. Regarding non-symmetric spectral problems using the VEM we refer to the following contributions \cite{MR3895875,Mora2021A2425,Mora2021,MR4497827} where once again the VEM confirms its accuracy on the approximation of the eigenvalues and eigenfunctions.

In our paper we consider the diffusion-convection eigenvalue problem which reads as follows: For an open and  bounded  domain $\Omega \subset \mathbb{R}^2$ with polygonal boundary $\partial\Omega$, find $u \neq 0$ such that 
\begin{equation}
\label{eq:state_equation}
\nabla \cdot (-\kappa(\mathbf{x}) \nabla u)  + \mathbf{\vartheta}(\mathbf{x}) \cdot \nabla u = \lambda u \quad \textrm{in}~\Omega, \qquad u = 0 \quad \textrm{on}~\partial\Omega,
\end{equation}
where  $\kappa$ is a  smooth function $\Omega\rightarrow\mathbb{R}$ with $\kappa(\mathbf{x})\geq\kappa_0>0$ for all $\mathbf{x}\in\Omega$ and  $\mathbf{\vartheta}$ is a sufficiently smooth vector-valued function  $\Omega\rightarrow\mathbb{R}^2$. Clearly this problem is non-symmetric and leads to complex eigenvalues and eigenfunctions. This problem has been already analyzed in \cite{MR3133493} with an a posteriori error estimator constructed by means of the finite element method. Precisely the estimator, which is of the residual type,  results to be reliable and efficient as is expected. The difference on our contributions  lie on the fact that an inf-sup condition is a key point to perform the analysis, which in \cite{MR3133493}  is no needed, since under suitable norms (an energy norm considering only the real part of the complex sesquilinear forms) all the estimates are derived. We directly use the $H^1$ and $L^2$ norms defined on complex fields as in \cite{lepe2023vem} for a virtual element method allowing for small edges, where a priori error estimates are obtained. However, the inf-sup condition that we need is not the one proved in \cite{lepe2023vem} (since this reference uses other type of VEM), but is the one already well established in \cite{beiraosec} for a classic VEM.  Clearly these highlights are important to take into consideration for the VEM that we analyze, where now the difficulties arise on the fact that the virtual element solution of the spectral problem is not  computable and the standard projections, virtual interpolators, and approximation properties 
now play a role on the estimates, leading to a different algebraic and numerical difficulties on the analysis. An example of this is available on the recent paper \cite{MR4497827} where a priori and a posteriori error estimators for a non-symmetric eigenvalue problem are derived. It is also necessary to differentiate the a priori analysis presented in this paper with the one presented in \cite{lepe2023vem}. In order to obtain a computable primal and dual residual estimator, it is necessary to write the sesquilinear form of the right-hand side depending on the classical stability bilinear form for the inner product in $L^2$ (see \cite{AABMR13}). However, this modification to the right-hand side bilinear form automatically implies that one cannot define a solution operator $T_h$ from the continuous space into the discrete space, since this operator is not computable. Therefore, the natural alternative is to define the operator $T_h$ from the virtual space with itself, which would imply using the theory of non-compact operators (see \cite{DNR1,DNR2}). However, inspired by \cite{MR4050542} we will define a suitable operator to be able to use the theory of and just prove the convergence in norm of operators.

\subsection{Organization of the paper}
The outline of the paper is the following: In Section \ref{sec:model} we present the model problem under consideration, presenting the functional space in which our work is supported, sesquilinear forms, well posedness of the problem, the continuous solution operators and the regularity of the primal and dual solutions. Section \ref{sec:virtual} is dedicated to introduce  the virtual element method, where the assumptions on the meshes are  presented, the virtual element spaces with the corresponding degrees of freedom, projections and discrete sesquilinear forms which are needed to present the discrete counterpart of the continuous eigenvalue problem introduced in the previous section.  The core of our paper begins in Section \ref{sec:a_priori} where we study the discrete discrete eigenvalue problem. More precisely, we derive the necessary error estimates for the the eigenvalues and eigenfunctions for the primal and dual eigenvalue problems. In Section \ref{sec:a_post} we introduce and analyze the a posteriori error estimators of our interest which are considered for both, the primal and dual eigenvalue problems. We prove on this section the reliability and efficiency of the proposed estimators. Finally in Section \ref{sec:numerics} we report a series of numerical tests where we are able to assess the performance of the proposed estimator, implementing different polygonal meshes for non-convex domains.
\noindent\section{The variational problem}
\label{sec:model}
long our paper the relation $\texttt{a} \lesssim \texttt{b}$ indicates that $\texttt{a} \leq C \texttt{b}$, with a positive constant $C$ which is independent of $\texttt{a}$, $\texttt{b}$. Now, since the eigenvalue problem that we are considering is non-symmetric, it is necessary to introduce complex Hilbert spaces in order to perform the analysis. Let us consider the space $H_0^1(\O,\mathbb{C})$ which corresponds to the classic $H^1$ space but defined on a complex field. We endow this space with the following inner product
\begin{equation*}
(v,w)_{1,\O}=\int_{\O}v\overline{w}+\int_{\O}\nabla v\cdot\nabla\overline{w}\quad\forall v,w\in H_0^1(\O,\mathbb{C}),
\end{equation*}
where $\overline{w}$ denotes the conjugated of $w$. Let us remark that the norm induced by the complex inner product defined above is the standard for the space $H^1$.

In order to simplify the presentation of the material, let us define $V:=H_0^{1}(\O,\mathbb{C})$. Let us begin with the variational formulation of problem  \eqref{eq:state_equation}: Find $\lambda\in\mathbb{C}$ and $0\neq u\in V$
such that  
\begin{equation}
\label{eq:spectral1}
\CB(u,v)=\lambda  c(u,v)  \quad \forall v \in V,
\end{equation}
where $\CB(\cdot,\cdot)$ is the sesquilinear form defined by $\CB(w,v):=a(w,v) + b(w,v)$ for all $w,v\in V$
and $c(\cdot,\cdot)$ is the sesquilinear form defined by 
$c(w,v):=(w, v)_{0,\O}$, whereas $a(\cdot,\cdot)$ and $b(\cdot,\cdot)$ are the sesquilinear forms defined as follows
\begin{equation}\label{eq:ab}
\begin{split}
a: V\times V\longrightarrow \mathbb{C}; \quad a(w,v) := \int_{\Omega}\kappa(\mathbf{x}) \nabla w\cdot\nabla \bar{v}, \quad \forall w,v \in V, \\
b: V\times V \longrightarrow \mathbb{C}; \quad b(w,v) := \int_{\Omega}( \mathbf{\vartheta}(\mathbf{x})\cdot \nabla w)\bar{v}, \quad \forall w,v \in V.
\end{split}
\end{equation}

The assumptions on the coefficients $\kappa$ and $\vartheta$ lead us to the correct definition of the sesquilinear forms $a(\cdot,\cdot)$ and $b(\cdot,\cdot)$ and hence, the continuity of $\CB(\cdot,\cdot)$, i.e, there exists a constant $M_1>0$ such that $\mathscr{B}(w,v)\leq M_1\|w\|_{1,\O}\|v\|_{1,\O}$ for all $v,w\in V$ with $M_{1} := \max\{\|\kappa\|_{\infty,\O},\|\vartheta\|_{\infty,\O}\}$.
Also, under the assumption that $\vartheta(\mathbf{x})$ is divergence-free, the following condition holds (see \cite[Equation (3.5)]{beiraosec})
\begin{equation}
\label{eq:inf-supB}
\displaystyle\sup_{v\in H_0^1(\Omega)}\frac{\mathscr{B}(w,v)}{\|v\|_{1,\O}}\geq \beta\|w\|_{1,\O}\quad \forall w\in V,
\end{equation}
where $\beta>0$ is a constant independent of $v$. This allows us to introduce the solution operator $T: V \longrightarrow V$, which is defined for a source $f\in V$ by $Tf := \widetilde{u}$ where $\widetilde{u} \in V$ is the solution of the following source problem 
\begin{equation*}
\CB(\widetilde{u},v)=c(f,v)  \quad \forall v \in V.
\end{equation*}
We remark that $T$ is well defined. On the other hand, we observe that $(\lambda,u) \in \mathbb{C} \times V$ is solution of \eqref{eq:spectral1} if and only if $(\mu,u) \in \mathbb{C} \times V$ is eigenpair of $T$, with $\mu = 1/\lambda$. Finally, since the bilinear form $\CB(\cdot,\cdot)$ is non-symmetric, the operator $T$ is not selfadjoint. This requires to consider the dual eigenvalue problem. Let us denote by $V^*$ the dual space of $V$. The dual eigenvalue problem reads as follows: Find $\lambda^{*} \in \mathbb{C}$ and $0 \neq u^{*} \in V^*$ such that 
\begin{equation}\label{eq:dual_problem}
\CB(v,u^{*}) = \overline{\lambda^{*}} c(v,u^{*}) \quad \forall v \in V^*,
\end{equation}
where, using integration by parts and the fact that $\vartheta(\mathbf{x})$ is divergence free, we have
\begin{equation*}
\CB(v,u^{*}) = a(u^{*},v) - b(u^{*},v), \quad \forall v \in V^*. 
\end{equation*}
Then, we define the dual solution operator $T^{*}: V^{*} \longrightarrow V^{*}$, which is defined for a source $f^{*}\in V^{*}$ by $T^{*}f^{*} := \widetilde{u}^{*}$ where $\widetilde{u}^{*} \in V^{*}$ is the solution of the following source problem 
\begin{equation*}
\CB(v,\widetilde{u}^{*})=c(v,f^{*})  \quad \forall v \in V.
\end{equation*}

Now, for the implementation of the virtual element method of our interest, the regularity of  \eqref{eq:spectral1} is a key ingredient in order to
obtain approximation properties. In fact, there exists $s > 0$, depending on $\O$, such that the solution  of \eqref{eq:spectral1} satisfies $u \in H^{1+s}(\O,\mathbb{C})$ and
\begin{equation}
\label{eq:regularity}
\|u\|_{1+s,\Omega}\lesssim \|u\|_{0,\Omega}.
\end{equation}

Finally, due to the compact inclusion $H^{1+s}(\O,\mathbb{C})$ onto $V$, we deduce that $T$ is compact. Therefore, the following spectral characterization of $T$ holds.
\begin{lemma}[Spectral characterization of $T$]
The spectrum of $T$ is such that $\sp(T)=\{0\}\cup\{\mu_k\}_{k\in\mathbb{N}}$, where $\{\mu_k\}_{k\in\mathbb{N}}$ is a sequence of complex eigenvalues that converge to zero, according to their respective multiplicities.
\end{lemma}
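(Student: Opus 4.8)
The plan is to deduce the statement directly from the Riesz--Schauder spectral theory for compact operators, once the compactness of $T$ has been established. Since the regularity estimate \eqref{eq:regularity} together with the compact inclusion of $H^{1+s}(\O,\mathbb{C})$ into $V$ already guarantees that $T:V\to V$ is a compact linear operator on the infinite-dimensional complex Hilbert space $V$, the only task left is to describe the structure of its spectrum, and no quantitative estimate is required: the result is a qualitative structural statement.

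First I would observe that, because $V$ is infinite-dimensional, $T$ cannot possess a bounded inverse. Indeed, if $0$ belonged to the resolvent set, then $I=TT^{-1}$ would be the composition of a compact operator with a bounded one, hence compact; this forces $\dim V<\infty$, a contradiction. Therefore $0\in\sp(T)$, which accounts for the isolated point $\{0\}$ in the claimed decomposition.

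Next, invoking the classical spectral theorem for compact operators (the theory of compact operators used throughout the paper), every nonzero point of $\sp(T)$ is an eigenvalue of finite algebraic multiplicity, and the set of these nonzero eigenvalues is at most countable with the origin as its only possible accumulation point. I would then enumerate them as a sequence $\{\mu_k\}_{k\in\mathbb{N}}$, repeated according to their respective multiplicities; the accumulation-at-zero property yields $\mu_k\to 0$, completing the decomposition $\sp(T)=\{0\}\cup\{\mu_k\}_{k\in\mathbb{N}}$.

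The one point deserving emphasis, rather than a genuine obstacle, is that $T$ is not selfadjoint, since $\CB(\cdot,\cdot)$ is non-symmetric; consequently the spectral theorem for selfadjoint compact operators does not apply, and the eigenvalues $\mu_k$ need not be real. This is precisely why the general Riesz--Schauder theory for (not necessarily selfadjoint) compact operators is the appropriate tool, and it is fully consistent with the complex nature of the eigenvalues announced in the statement. The analogous characterization for the dual operator $T^{*}$ follows by the same argument, noting that $T^{*}$ is likewise compact and that its nonzero eigenvalues are the complex conjugates of those of $T$.
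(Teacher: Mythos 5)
Your proposal is correct and follows exactly the route the paper takes: the paper states this lemma without an explicit proof, deducing it directly from the compactness of $T$ (obtained via the regularity estimate and the compact embedding of $H^{1+s}(\O,\mathbb{C})$ into $V$) together with the classical Riesz--Schauder theory for compact, not necessarily selfadjoint, operators. Your write-up simply makes explicit the standard details (why $0\in\sp(T)$ in infinite dimensions, finite multiplicities, accumulation only at the origin) that the paper leaves implicit.
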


For the dual eigenvalue problem, an additional regularity for the eigenfunctions is also needed. In particular, there exists $s^*>0$ such that for $u^*\in V^*$ solution of \eqref{eq:dual_problem}, the following estimate holds
\begin{equation}
\label{eq:regularity_dual}
\|u^*\|_{1+s^*,\Omega}\lesssim \|u^*\|_{0,\Omega}.
\end{equation}
Finally the spectral characterization of $T^*$ is given as follows.
\begin{corollary}[Spectral characterization of $T^*$]
The spectrum of $T^*$ is such that $\sp(T^*)=\{0\}\cup\{\mu_k^*\}_{k\in\mathbb{N}}$, where $\{\mu_k^*\}_{k\in\mathbb{N}}$ is a sequence of complex eigenvalues that converge to zero, according to their respective multiplicities.
\end{corollary}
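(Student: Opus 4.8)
The plan is to obtain the conclusion as a direct consequence of the spectral characterization of $T$ already established in the Lemma, exploiting the fact that $T^*$ is nothing but the adjoint of $T$ once $V^*$ is identified with $V$ through the Riesz representation theorem. First I would check that $T^*$ is well defined: the dual source problem $\CB(v,\widetilde{u}^*)=c(v,f^*)$ for all $v\in V$ must be uniquely solvable. This follows from an inf-sup condition for the transposed form $(v,w)\mapsto\CB(v,w)=a(w,v)-b(w,v)$, which in a Hilbert space setting holds with the same constant $\beta$ as the original inf-sup condition \eqref{eq:inf-supB} (together with the non-degeneracy $\sup_{w}\CB(w,v)>0$ for $v\neq 0$, again guaranteed by the divergence-free assumption on $\vartheta$). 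This makes $T^*\colon V^*\to V^*$ a bounded linear operator.

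Next I would establish that $T^*$ is compact, mirroring the argument used for $T$. By the additional regularity estimate \eqref{eq:regularity_dual}, every $\widetilde u^*=T^*f^*$ belongs to $H^{1+s^*}(\O,\mathbb{C})$ with $\|\widetilde u^*\|_{1+s^*,\O}\lesssim\|f^*\|_{0,\O}\lesssim\|f^*\|_{1,\O}$; since the inclusion $H^{1+s^*}(\O,\mathbb{C})\hookrightarrow V$ is compact, $T^*$ sends bounded sets into relatively compact sets and is therefore compact. Alternatively, compactness is immediate from Schauder's theorem, the adjoint of the compact operator $T$ being automatically compact.

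With compactness in hand, the classical Riesz--Schauder spectral theory for compact operators on an infinite-dimensional Hilbert space applies verbatim: the spectrum of $T^*$ consists of $\{0\}$ together with an at most countable set of nonzero eigenvalues of finite multiplicity whose only possible accumulation point is $0$, which yields $\sp(T^*)=\{0\}\cup\{\mu_k^*\}_{k\in\mathbb{N}}$ with $\mu_k^*\to 0$. To align the multiplicities with those of $T$, I would invoke the standard relation between the spectrum of a compact operator and that of its adjoint, namely $\mu_k^*=\overline{\mu_k}$, so that the nonzero eigenvalues of $T^*$ are the complex conjugates of those of $T$ with identical algebraic and geometric multiplicities. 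The only point requiring genuine care, and hence the main obstacle, is the well-posedness of the transposed problem, i.e. verifying that the inf-sup condition transfers to the adjoint form; once this is secured, the remainder is a routine application of the Lemma and of compact-operator theory.
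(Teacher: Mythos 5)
Your proposal is correct and follows essentially the same route the paper (implicitly) takes: compactness of $T^*$ via the additional regularity \eqref{eq:regularity_dual} and the compact embedding $H^{1+s^*}(\O,\mathbb{C})\hookrightarrow V$ (or, equivalently, Schauder's theorem applied to the compact operator $T$), followed by the Riesz--Schauder theory and the relation $\mu_k=\overline{\mu_k^*}$ that the paper records just after the corollary. The extra care you devote to the well-posedness of the transposed problem via the adjoint inf-sup condition is a sensible addition but not a departure from the paper's argument.
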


It is important to take into account the following:  if  $\mu$ is an eigenvalue of $T$ with multiplicity $m$ and  $\mu^{*}$ is an eigenvalue of $T^{*}$ with the same multiplicity, then $\mu=\overline{\mu^{*}}$. Moreover,  for the discrete solution operators, if $\mu_h$ is an eigenvalue of $T_h$ and $\mu_h^{*}$ is an eigenvalue of $T_h^{*}$, then $\mu_h=\overline{\mu_h^{*}}$.

\section{The virtual element method}
\label{sec:virtual}

In this section we briefly review the  virtual element method that for the system \eqref{eq:spectral1}. First we recall the mesh construction and the assumptions considered in \cite{BBCMMR2013} for the virtual element
method. 
Let $\left\{\CT_h\right\}_h$ be a sequence of decompositions of $\Omega$ into polygons, $E$. Let us denote by $h_E$  the diameter of the element $E$ and $h$ the maximum of the diameters of all the elements of the mesh, i.e., $h:=\max_{E\in\Omega}h_E$.  Moreover, for simplicity in what follows we assume that $\kappa $ is  piecewise constant with  respect to the decomposition $\mathcal{T}_h$, i.e., it is  piecewise constant for all $E\in \mathcal{T}_h$.

 For the analysis of the VEM, we will make as in \cite{BBCMMR2013} the following
assumptions:
\begin{itemize}
\item \textbf{A1.} There exists $\rho > 0$ such that, for all meshes
$\CT_h$, each polygon $E\in\CT_h$ is star-shaped with respect to a ball
of radius greater than or equal to $\rho h_{E}$.
\item \textbf{A2.} The distance between any two vertexes of $E$ is $\geq Ch_{E}$, where $C$ is a positive constant.
\end{itemize}

For any simple polygon $ E$ we define 
\begin{align*}
	\widetilde{V}_h^E:=\{v_h\in H^{1}(E,\mathbb{C}):\Delta v_h \in \mathbb{P}_1(E),
	v_h|_{\partial E}\in C^0(\partial E), v_h|_{e}\in \mathbb{P}_1(e) \  \forall e \in \partial E  \}.
\end{align*}

Now, in order to choose the degrees of freedom for $\widetilde{V}_h^E$ we define
\begin{itemize}
	\item $\mathcal{V}_E^h$: the value of $w_{h}$ at each vertex of $E$,
\end{itemize}
as a set of linear operators from $\widetilde{V}_h^E$ into $\R$. In \cite{AABMR13} it was established that $\mathcal{V}_E^h$ constitutes a set of degrees of freedom for the space $\widetilde{V}_h^E$.

 On the other hand, we define  the projector $\PiK:\ \WK\longrightarrow\P_1(E)\subseteq\WK$ for
each $v_{h}\in\WK$ as the solution of 
\begin{equation*}
\int_E (\nabla\PiK v_{h}-\nabla v_{h})\cdot\nabla q=0
\quad\forall q\in\P_1(E),\qquad
\overline{\PiK v_{h}}=\overline{v_{h}},
\end{equation*}
where  for any sufficiently regular
function $v$, we set $\overline{v}:=\vert\partial E\vert^{-1}(v,1)_{0,\partial E}$.
%
We observe that the term $\PiK v_{h}$ is well defined and computable from the degrees of freedom  of $v$ given by $\mathcal{V}_E^h$, and in addition the projector $\PiK$ satisfies the identity  $\PiK(\P_{1}(E))=\P_{1}(E)$ (see for instance \cite{AABMR13}).

We are now in position  to introduce our local virtual space
\begin{equation*}\label{Vk}
\VK
:=\left\{v_{h}\in 
\WK: \displaystyle \int_E \PiK v_{h}p=\displaystyle \int_E v_{h}p,\quad \forall p\in 
\mathbb{P}_1(E)\right\}.
\end{equation*}
Now, since $\VK\subset \WK$ the operator $\PiK$ is well defined on $\VK$ and computable  only on the basis of the output values of the operators in $\mathcal{V}_E^h$.
In addition, due to the particular property appearing in definition of the space $\VK$, it can be seen that for every $p \in \mathbb{P}_1(E)$ and every $v_h\in \VK$ the term $(v_h,p)_{0,E}$
is computable from $\PiK v_h$, and hence  the  ${\mathrm L}^2(E,\mathbb{C})$-projector operator $\Pio: \VK\to \P_1(E)$ defined  by
$$\int_E \Pi^{E}v_h p=\int_E v_h p\qquad \forall p\in \mathbb{P}_1(E),$$
depends only on the values of the degrees of freedom of $v_h$.  Actually, it is easy to check that the projectors $\PiK$ and $\Pi^{E}$ are the same operators  on the space $\VK$ (see \cite{AABMR13} for further details).

Finally, for every decomposition $\CT_h$ of $\Omega$ into simple polygons $ E$ we define the global virtual space
\begin{equation}
\label{eq:globa_space}
\Vh:=\left\{v\in H^{1}(E,\mathbb{C}):\ v|_{ E}\in\VK\quad\forall E\in\CT_h\right\},
\end{equation}
and the global degrees of freedom are obtained by collecting the local ones, with the nodal and interface degrees of freedom corresponding to internal entities counted only once
those on the boundary are fixed to be equal to zero in accordance with the ambient space $V$. Finally, we denote by $\Pi^{\nabla}$ the global virtual projection and by $\Pi$ the global $L^{2}$-orthogonal projection.

\subsection{Discrete formulation}
 In order to construct the discrete scheme, we need some preliminary definitions. First, we split the sesquilinear form $\CB(\cdot, \cdot)$ as follows:
 \begin{equation*}
\label{eq:bilineal_form_B_split}
\CB(w,v):=\sum_{ E\in\CT_h} \CB^{E}(w,v):=\sum_{ E\in\CT_h} a^{E}(w,v)+ b^{E}( w,v)\,\, \forall w,v\in  V,
\end{equation*}
\noindent where
\begin{equation*}
	 a^{E}(w,v):=\int_E \kappa(\mathbf{x}) \nabla w\cdot\nabla \bar{v}, \quad
	  b^{E}(w,v):=\int_E(\mathbf{\vartheta}(\mathbf{x})\cdot \nabla w)\bar{v}.\quad
\end{equation*}

Now, in order to propose the discrete counterpart of $a(\cdot,\cdot)$ (cf. \eqref{eq:ab}), we consider any symmetric and semi-positive definite sesquilinear form  $S^{E}:\VK\times\VK\to \mathbb{R}$ satisfying
\begin{equation}
\label{eq:staba}
a_{0}a^{E}(w_{h},w_{h}) \leq S^{E}(w_{h},w_{h}) \leq a_{1}a^{E}(w_{h},w_{h}), \quad  \forall w_{h} \in \VK \cap \ker(\PiK),
\end{equation}
where $a_{0}$ and $a_{1}$ are positive constants depending on the mesh assumptions and on $\kappa$. Next, we introduce another symmetric and positive definite sesquilinear form $S_{0}^{E}:\VK\times\VK\to \mathbb{R}$ satisfying
\begin{equation}
\label{eq:stabc}
c_{0}c^{E}(w_{h},w_{h}) \leq S_{0}^{E}(w_{h},w_{h}) \leq c_{1}c^{E}(w_{h},w_{h}), \quad  \forall w_{h} \in \VK \cap \ker(\Pi^{E}),
\end{equation}
where $c_{0}, c_{1}$ are two uniform positive constants. Then, we introduce on each element $E$ the local (and computable) sesquilinear forms

\begin{itemize}
\item $a_h^E(w_h, v_h)  :=a^{E}(\PiK w_h,\PiK v_h) +S^E( w_h-\PiK w_h, v_h-\PiK  v_h)$,
\vspace{0.3cm}
\item $b_h^E( w_h, v_h)  :=b^E(\PiK w_h,\Pi^{E}v_h)$,
\vspace{0.3cm}
\item $c_{h}^{E}(w_h, v_h) := (\Pi^{E}w_h, \Pi^{E}v_h)_{0,\O} + S_{0}^E( w_h-\Pi^{E} w_h, v_h-\Pi^{E}  v_h)$,\\
\end{itemize}
for all $w_h,v_h\in \VK.$ Moreover, $a_{h}^{E}(\cdot,\cdot)$ satisfies the classical properties of consistency and stability presented in \cite[Section 3.1]{BBCMMR2013}.
We also introduce the broken $H^1$-seminorm $\left|v\right|_{1,h}^2
:=\sum_{E\in\CT_h}\left\|\nabla v\right\|_{0,E}^2.$

As is customary, the discrete sesquilinear forms $\CB_h(\cdot,\cdot)$, $c_{h}(\cdot,\cdot)$ can be expressed componentwise as follows
 \begin{equation*}
\begin{split}
\label{eq:bilineal_form_B_split_{h}}
&\CB_{h}(w_{h},v_{h}):= \sum_{ E\in\CT_h}\CB_{h}^{E}(w_{h},v_{h})
= \sum_{ E\in\CT_h} a_{h}^{E}( w_{h}, v_{h})+ b_{h}^{E}( w_{h},v_{h}), \\
&c_{h}(w_h,v_h) := \sum _{E\in\CT_h} c_{h}^{E}(w_h,v_h).
\end{split}
\end{equation*}
\subsection{Spectral discrete problem}
Now we introduce the VEM discretization of problem \eqref{eq:spectral1}. To do this task, we requiere the global space $V_h$
defined in \eqref{eq:globa_space} together with the assumptions introduced in Section \ref{sec:virtual}.

The spectral problem reads as follows: Find $\lambda_h\in\mathbb{C}$ and $0\neq u_h\in V_h$ such that
\begin{equation}
\label{eq:spectral_disc}
\CB_{h}(u_h,v_h)=\lambda_h c_h(u_h,v_h) \quad \forall v_h \in V_h.
\end{equation}

From \cite[Lemma 5.7]{beiraosec}, we invoke the existence  of  a constant $\underline{\widehat{\beta}}>0$ such that, for all $h<h_0$ there holds
\begin{equation}\label{eq:inf-supBhab}
\displaystyle\sup_{w_h\in V_h}\frac{\mathscr{B}_{h}(v_h,w_h)}{\|w_h\|_{1,\O}}\geq\underline{\widehat{\beta}}\|v_h\|_{1,\Omega}\quad\forall v_h\in V_h.
\end{equation}

This allows us to introduce the discrete solution operator $T_h:V_h\rightarrow V_h$, which is defined by $T_hf_h:=\widetilde{u}_h$ where $\widetilde{u}_h\in V_h$ is the solution of the following source problem
\begin{equation}
\label{eq:source_discrete}
\CB_{h}(\widetilde{u}_h,v_h)=c_h(f_h,v_h)\quad \forall v_h \in V_h.
\end{equation}
As in the continuous case, is important to consider the discrete dual eigenvalue problem: Find $\lambda_{h}^{*} \in \mathbb{C}$ and $0 \neq u_{h}^{*} \in V_{h}$ such that 
\begin{equation*}
\label{eq:spectral_disc_dual}
\CB_{h}(v_h,u_h^{*})=\overline{\lambda_h^{*}} c_h(v_h,u_h^*) \quad \forall v_h \in V_h.
\end{equation*}
For this dual eigenvalue problem, we introduce the discrete adjoint operator of $T_{h}$, $T_h^*:V_h \rightarrow V_h$, which is defined by $T_h^*f_h:=\widetilde{u}_h^*$ where $\widetilde{u}_h^*\in V_h$ is the solution of the following dual source problem
\begin{equation}
\label{eq:source_discrete_dual}
\CB_{h}(v_h,\widetilde{u}_h^*)=c_h(v_h,f_h)\quad \forall v_h \in V_h.
\end{equation}

On the other hand, the following best approximation property for the $L^{2}$-projector $\Pi^{E}$ holds
\begin{equation}\label{eq:bestapproximation}
\|v - \Pi^{E}v\|_{0,E} = \underset{q \in \mathbb{P}_{k}(E)}{\inf} \|v - q\|_{0,E}, \quad \forall v \in L^{2}(E,\mathbb{C}).
\end{equation}
Moreover, the following estimate holds
\begin{equation}\label{eq:errorprojec}
\|v - \Pi^{E}v\|_{0,E} \lesssim h_{E}|v|_{1,E}, \quad \forall v \in H^{1}(E,\mathbb{C}).
\end{equation}

On the other hand, we also have the following well known approximation result for polynomials in star-shaped domain (see for instance \cite{BS-2008}).

\begin{lemma}
\label{eq:polyapprox}
Under the assumptions \textbf{A1} and \textbf{A2}, let $v \in H^{1+s}(E,\mathbb{C})$, with $0 \leq s \leq 1$. Then, there exists $v_{\pi} \in \mathbb{P}_{1}(E)$ such that 
\begin{equation*}
\|v - v_{\pi}\|_{0,E} + h_{E}|v - v_{\pi}|_{1,E} \lesssim h_{E}^{1+s}|v|_{1+s,E}.
\end{equation*}
\end{lemma}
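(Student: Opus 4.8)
The plan is to construct $v_\pi$ as the averaged (Sobolev) Taylor polynomial of degree one and to combine the classical Bramble--Hilbert estimates at the integer endpoints $s=0$ and $s=1$ with an interpolation argument for the intermediate range. Since assumption \textbf{A1} guarantees that $E$ is star-shaped with respect to a ball $B\subset E$ of radius $\geq\rho h_E$, I would fix a mollifier $\phi\in C_0^\infty(B)$ with $\int_B\phi\,\dd\by=1$ and set
\[
v_\pi(\bx):=\int_B\Bigl(v(\by)+\nabla v(\by)\cdot(\bx-\by)\Bigr)\phi(\by)\,\dd\by .
\]
By construction $v_\pi\in\P_1(E)$, the map $Qv:=v_\pi$ is linear and bounded, and it reproduces affine functions, i.e.\ $Qp=p$ for all $p\in\P_1(E)$; moreover the Dupont--Scott theory ensures that the stability and approximation constants associated with $Q$ depend on the geometry only through the chunkiness parameter, which by \textbf{A1} is controlled by $\rho$ uniformly over the mesh family.

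First I would settle the two integer endpoints. For $s=1$ the classical Bramble--Hilbert lemma applied on the star-shaped element yields
\[
\|v-v_\pi\|_{0,E}+h_E\,|v-v_\pi|_{1,E}\lesssim h_E^{2}\,|v|_{2,E},
\]
whereas for $s=0$ the $H^1$-stability of $Q$ together with the reproduction of constants gives the cruder bound
\[
\|v-v_\pi\|_{0,E}+h_E\,|v-v_\pi|_{1,E}\lesssim h_E\,|v|_{1,E}.
\]
In both inequalities the hidden constants are independent of $h_E$, being controlled by $\rho$ through \textbf{A1} and by the vertex-separation condition \textbf{A2}.

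For the intermediate range $0<s<1$ I would interpolate. Since $I-Q$ annihilates $\P_1(E)$, the seminorms appearing on the right-hand sides are equivalent to the quotient norms on $H^{1+s}(E,\mathbb{C})/\P_1(E)$ (Deny--Lions), and $I-Q$, suitably rescaled by powers of $h_E$, is bounded from this quotient space into $H^1(E,\mathbb{C})$ at both endpoints by the two estimates above. Invoking the interpolation identity $[H^1(E,\mathbb{C}),H^2(E,\mathbb{C})]_{s}=H^{1+s}(E,\mathbb{C})$, the endpoint bounds interpolate to the desired
\[
\|v-v_\pi\|_{0,E}+h_E\,|v-v_\pi|_{1,E}\lesssim h_E^{1+s}\,|v|_{1+s,E}.
\]

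The main obstacle is ensuring that the interpolation constant does not deteriorate with the shape of $E$ nor with $h_E$: a naive scaling to a single reference configuration is unavailable, because the admissible elements form a whole family rather than affine images of one fixed polygon. I would circumvent this by appealing to the fractional Bramble--Hilbert estimates of Dupont--Scott directly, whose constants depend only on the chunkiness parameter (hence on $\rho$), thereby absorbing the geometric dependence into \textbf{A1} and \textbf{A2} and yielding uniformity over the whole sequence $\{\CT_h\}_h$.
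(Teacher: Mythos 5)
Your argument is correct and coincides with what the paper relies on: the lemma is stated there without proof as a classical result, with a citation to Brenner--Scott, and your construction via the averaged Taylor polynomial, the integer-endpoint Bramble--Hilbert bounds, and the Dupont--Scott fractional interpolation with constants controlled by the chunkiness parameter under \textbf{A1}--\textbf{A2} is exactly the standard proof behind that citation. No discrepancy to report.
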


Finally, we have the following result, that provides the existence of an interpolant operator on the virtual space (see \cite[Proposition 4.2]{MR3340705}).

\begin{lemma}[Existence of the virtual interpolation]
\label{eq:interpolant}
Under the assumptions \textbf{A1} and \textbf{A2}, let $v \in H^{1+s}(E,\mathbb{C})$, with $0 \leq s \leq 1$. Then, there exists $v_{I} \in V_{h}^{E}$ such that 
\begin{equation*}
\|v - v_{I}\|_{0,E} + h_{E}|v - v_{I}|_{1,E} \lesssim h_{E}^{1+s}|v|_{1+s,E}.
\end{equation*}
\end{lemma}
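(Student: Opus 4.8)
The plan is to take $v_I$ to be the canonical interpolant associated with the degrees of freedom $\mathcal{V}_E^h$ and to estimate the error by a Bramble--Hilbert argument, exploiting that this interpolation operator fixes $\mathbb{P}_1(E)$. Since $s>0$ and $E\subset\R^2$, the Sobolev embedding $H^{1+s}(E,\mathbb{C})\hookrightarrow C^0(\overline E)$ makes the vertex values of $v$ well defined, so I would set $v_I\in\VK$ to be the unique virtual function whose degrees of freedom coincide with the vertex values of $v$; unisolvence of $\mathcal{V}_E^h$ on $\VK$ makes this construction well posed and linear. Because every $q\in\mathbb{P}_1(E)\subseteq\VK$ is interpolated to itself, the map $v\mapsto v_I$ leaves $\mathbb{P}_1(E)$ invariant.

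Next I would pick the polynomial $v_\pi\in\mathbb{P}_1(E)$ furnished by Lemma~\ref{eq:polyapprox}, write $w:=v-v_\pi$, and split
\begin{equation*}
v-v_I=(v-v_\pi)-(v-v_\pi)_I=w-w_I,
\end{equation*}
where $w_I=(v-v_\pi)_I$ by linearity and polynomial invariance. The term $w$ is controlled directly by Lemma~\ref{eq:polyapprox}, which gives $\|w\|_{0,E}+h_E|w|_{1,E}\lesssim h_E^{1+s}|v|_{1+s,E}$; moreover $|w|_{1+s,E}=|v|_{1+s,E}$ since $1+s>1$. The whole matter therefore reduces to bounding $\|w_I\|_{0,E}+h_E|w_I|_{1,E}$ by the same right-hand side.

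The central step is a stability estimate for virtual functions in terms of their degrees of freedom, namely $\|z_h\|_{0,E}\lesssim h_E\bigl(\sum_i|z_h(x_i)|^2\bigr)^{1/2}$ and $|z_h|_{1,E}\lesssim\bigl(\sum_i|z_h(x_i)-\bar z_h|^2\bigr)^{1/2}$ for $z_h\in\VK$, with constants depending only on $\rho$ and $C$ from \textbf{A1}--\textbf{A2}. I would establish this by scaling to a configuration of unit diameter, where star-shapedness \textbf{A1} and the edge non-degeneracy \textbf{A2} confine the admissible element shapes to a compact set, and then invoke equivalence of norms on the finite-dimensional nodal data together with a Poincar\'e--Friedrichs inequality to recover interior control from the boundary data of the virtual function (recall $\Delta z_h\in\mathbb{P}_1(E)$ and $z_h|_e\in\mathbb{P}_1(e)$). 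Applying this to $z_h=w_I$, whose degrees of freedom are the vertex values of $w$, and bounding those nodal values by a scaled trace inequality combined with the Sobolev embedding, $\sum_i|w(x_i)|^2\lesssim h_E^{-2}\|w\|_{0,E}^2+|w|_{1,E}^2+h_E^{2s}|w|_{1+s,E}^2$, I would close the estimate using Lemma~\ref{eq:polyapprox} once more, since each resulting term collapses to the single factor $h_E^{1+s}|v|_{1+s,E}$.

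The main obstacle is precisely the interpolation-stability estimate of the third paragraph: in contrast with the finite element setting there is no single reference element, so the uniformity of the constants must be extracted from \textbf{A1}--\textbf{A2} through the scaling argument, and the bound on the nodal values of $w$ requires carefully tracking the powers of $h_E$ in the scaled trace and Sobolev inequalities so that they combine correctly. Once these scaling balances are verified, the triangle inequality $\|v-v_I\|_{0,E}+h_E|v-v_I|_{1,E}\le\bigl(\|w\|_{0,E}+h_E|w|_{1,E}\bigr)+\bigl(\|w_I\|_{0,E}+h_E|w_I|_{1,E}\bigr)$ yields the claim.
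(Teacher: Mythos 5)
The paper does not prove this lemma at all: it simply invokes \cite[Proposition 4.2]{MR3340705}, so what you have written is a from-scratch reconstruction rather than a variant of an argument appearing in the text. Your outline is the standard one and its skeleton is sound: the canonical nodal interpolant is well defined and linear by unisolvence of $\mathcal{V}_E^h$ on $\VK$, it reproduces $\mathbb{P}_1(E)\subseteq\VK$, the splitting $v-v_I=w-w_I$ with $w=v-v_\pi$ is correct, and your scaling bookkeeping for the nodal values of $w$ (the powers $h_E^{-2}$, $1$, $h_E^{2s}$) is consistent, so that everything collapses to $h_E^{1+s}|v|_{1+s,E}$ via Lemma~\ref{eq:polyapprox}. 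The value of the citation route is that it outsources exactly the two points where your sketch is thinnest, as follows.

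First, the endpoint $s=0$ is genuinely excluded by your construction: the lemma is stated for $0\le s\le 1$, and in two dimensions $H^{1}(E,\mathbb{C})$ does not embed into $C^0(\overline E)$, so the vertex values of $v$ are not defined and the nodal interpolant does not exist (moreover the Sobolev constant you use degenerates as $s\to 0^+$). The proof behind \cite[Proposition 4.2]{MR3340705} avoids this by first applying a Cl\'ement-type quasi-interpolant on the sub-triangulation $\CT_h^E$ and only then matching degrees of freedom, which requires no point values of $v$ itself. Second, the ``central step'' you identify --- the uniform stability $\|z_h\|_{0,E}+h_E|z_h|_{1,E}\lesssim h_E\bigl(\sum_i|z_h(x_i)|^2\bigr)^{1/2}$ for $z_h\in\VK$ --- is not a routine norm-equivalence: the space $\VK$ itself varies with the element (its members solve a PDE on $E$ and satisfy the enhancement constraint), so the compactness argument over shapes admissible under \textbf{A1}--\textbf{A2} must be carried out for a family of element-dependent function spaces, not for a fixed finite-dimensional space on a reference element. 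Rigorous versions of this step exist (and the boundedness of the number of vertices under \textbf{A2} is indeed what makes it possible), but as written it is an assertion rather than a proof, and it is precisely the content that the paper's citation is meant to cover. With $s>0$ assumed and the stability estimate granted, your argument closes correctly.
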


\section{A priori error estimates}
\label{sec:a_priori}
In the present section, we present the a priori error estimates for the eigenfunctions and eigenvalues, as well as error estimates for the dual eigenfunctions. With the aim of taking advantage of the compactness of $T$ to prove convergence in norm, and inspired by \cite{MR4050542}, we introduce the operator $P_h:\L^2(\O,\mathbb{C})\rightarrow V_h\hookrightarrow V$ with range $V_h$, which is defined according to the following property:
$
c(P_h u-u,v_h)=0\quad\forall v_h\in V_h
$, where $c(\cdot,\cdot)$ is nothing else but the complex $L^2$ product. Moreover, we have that  $\|P_h u\|_{0,\O}\leq \|u\|_{0,\O}$. With the aid of this operator $P_h$, let us introduce the following operator               $\widehat{T}_h:T_hP_h: V\rightarrow V_h$ which is correctly defined for any source $f\in V$. Moreover, it is easy to check that $\sp(\widehat{T}_h)=\sp(T_h)\cup\{0\}$ and that the eigenfunctions of $\widehat{T}_h$ and $T_h$ coincide. To define the operator $\widehat{T}_h^*$ we procede in the analogous way, but considering the space $V^*$ instead $V$ to define the composition. 

All the previous calculations allow us to obtain the desire convergence in norm of $\widehat{T}_h$ to $T$ as $h$ goes to zero. This is established in the following result. 
\begin{lemma}
\label{col:conv_in_norm}
The following estimates hold:
\begin{enumerate}
\item For the primal operators, there exists $s>0$ such that 
\begin{equation*}
\|(T - \widehat{T}_h) f\|_{1,\O}\lesssim h^{s}\|f\|_{1,\O},
\end{equation*}
\item For the dual operators,  there exists $s^*>0$ such that 
\begin{equation*}
\|(T^* - \widehat{T}^*_h) f^*\|_{1,\O}\lesssim h^{s^*}\|f^*\|_{1,\O},
\end{equation*}
\end{enumerate}
where the hidden constants on each estimate are independent of $h$.
\end{lemma}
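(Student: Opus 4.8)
The plan is to prove the first (primal) estimate; the dual estimate follows verbatim after replacing $T$, $\CB$, the inf-sup condition \eqref{eq:inf-supBhab} and the regularity \eqref{eq:regularity} by their dual counterparts. Fix $f\in V$ and set $u:=Tf$, so that $\CB(u,v)=c(f,v)$ for all $v\in V$, and $u_h:=\widehat{T}_hf=T_hP_hf$, which by definition of $T_h$ satisfies $\CB_h(u_h,v_h)=c_h(P_hf,v_h)$ for all $v_h\in\Vh$. By the regularity estimate \eqref{eq:regularity} we have $u\in H^{1+s}(\O,\mathbb{C})$ with $\|u\|_{1+s,\O}\lesssim\|u\|_{0,\O}\lesssim\|f\|_{0,\O}\lesssim\|f\|_{1,\O}$, the second inequality being the continuity of the solution operator $T$. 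Hence Lemma \ref{eq:interpolant} provides an interpolant $u_I\in\Vh$ with $\|u-u_I\|_{0,\O}+h|u-u_I|_{1,\O}\lesssim h^{1+s}\|u\|_{1+s,\O}$, and the triangle inequality reduces the proof to estimating $\|u_I-u_h\|_{1,\O}$.

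Since $u_I-u_h\in\Vh$, the discrete inf-sup condition \eqref{eq:inf-supBhab} gives
$$\underline{\widehat{\beta}}\,\|u_I-u_h\|_{1,\O}\le\sup_{0\ne w_h\in\Vh}\frac{\CB_h(u_I-u_h,w_h)}{\|w_h\|_{1,\O}}.$$
I would then transform the numerator into consistency terms to be bounded uniformly in $w_h$. Using $\CB_h(u_h,w_h)=c_h(P_hf,w_h)$ and the identity $\CB(u,w_h)=c(f,w_h)=c(P_hf,w_h)$ (the last equality by the defining property of $P_h$ and $w_h\in\Vh$), I write
$$\CB_h(u_I-u_h,w_h)=\big[\CB_h(u_I,w_h)-\CB(u,w_h)\big]+\big[c(P_hf,w_h)-c_h(P_hf,w_h)\big].$$

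The first bracket is split elementwise into its diffusive and convective parts. On each $E$ I introduce the polynomial $u_\pi$ from Lemma \ref{eq:polyapprox} and exploit the polynomial consistency of $a_h^E$ and $b_h^E$ together with the stability bound \eqref{eq:staba}: adding and subtracting $u_\pi$ the consistent (polynomial) contributions cancel, leaving terms controlled by $|u-u_\pi|_{1,E}$, by $|u_\pi-u_I|_{1,E}\le|u-u_\pi|_{1,E}+|u-u_I|_{1,E}$, and by the projection errors $\|\cdot-\Pio\cdot\|_{0,E}$ estimated through \eqref{eq:errorprojec}; summing over $E$ and invoking Lemmas \ref{eq:polyapprox} and \ref{eq:interpolant} yields $|\CB_h(u_I,w_h)-\CB(u,w_h)|\lesssim h^s\|u\|_{1+s,\O}\|w_h\|_{1,\O}$. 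For the second bracket, using that $c_h^E-c^E$ vanishes on polynomials I reduce it, via the best approximation property \eqref{eq:bestapproximation} and the stability \eqref{eq:stabc}, to a sum of products $\|P_hf-\Pio P_hf\|_{0,E}\,\|w_h-\Pio w_h\|_{0,E}$; here $\|w_h-\Pio w_h\|_{0,E}\lesssim h_E|w_h|_{1,E}$ and $\|P_hf-\Pio P_hf\|_{0,E}\le\|P_hf-f\|_{0,E}+\|f-\Pio f\|_{0,E}\lesssim h_E\|f\|_{1,\O}$, the first summand being controlled by $\|f-P_hf\|_{0,\O}\lesssim h\|f\|_{1,\O}$ from the $L^2$-optimality of $P_h$ and Lemma \ref{eq:interpolant} with $s=0$. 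This makes the second bracket of order $h^2\|f\|_{1,\O}\|w_h\|_{1,\O}$, hence absorbed. Collecting the two brackets, dividing by $\|w_h\|_{1,\O}$, and combining with the interpolation estimate through the triangle inequality gives $\|u-u_h\|_{1,\O}\lesssim h^s\|u\|_{1+s,\O}\lesssim h^s\|f\|_{1,\O}$, which is the claim.

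The delicate point is the convective consistency term, where the two distinct projectors $\Pi^{\nabla}$ and $\Pio$ entering $b_h^E$ must be handled simultaneously, and one must verify that the extra projection errors they introduce remain of order $h^s|u|_{1+s,E}$ rather than degrading the rate; the bookkeeping of the non-symmetric form and the need to keep every bound uniform in $w_h$, so that the inf-sup supremum can legitimately be taken, is where the care is concentrated. A secondary obstacle is the $P_h$-dependent mass term, which forces the best-approximation detour above in order to avoid assuming $H^1$-stability of $P_h$, for which only the $L^2$-bound $\|P_hf\|_{0,\O}\le\|f\|_{0,\O}$ is at our disposal.
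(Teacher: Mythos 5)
Your proposal is correct and follows essentially the same route as the paper's proof: triangle inequality through the interpolant $u_I$, the discrete inf-sup condition \eqref{eq:inf-supBhab} to control $\|u_I-u_h\|_{1,\O}$, and a decomposition of $\CB_h(u_I-u_h,w_h)$ into the mass-term consistency error $c(P_hf,\cdot)-c_h(P_hf,\cdot)$ plus the $a$- and $b$-consistency errors, each handled by polynomial approximation, interpolation and projection estimates. The only cosmetic differences are that you invoke the inf-sup as a genuine supremum over $w_h$ (the paper tests with $w_h=v_h$) and that you bound the mass term directly via best approximation where the paper cites an external lemma; neither changes the argument.
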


\begin{proof}
Let us define $u := Tf$ and $u_{h} := \widehat{T}_{h}f$, and let $u_{I} \in V_{h}$ be the interpolant of $u$. Using triangle inequality, we obtain
\begin{equation*}
\|(T - \widehat{T}_{h})f\|_{1,\O} = \|u - u_{h}\|_{1,\O} \leq \|u - u_{I}\|_{1,\O} + \|u_{h} - u_{I}\|_{0,\O}. 
\end{equation*}
Observe that the first term in the above inequality  is estimated by using Lemma \ref{eq:interpolant} and the boundedness of $T$, so we only need to estimate the second term. To do this task, let us define $v_{h} := u_{h} - u_{I} \in V_{h}$. Then, using \eqref{eq:inf-supBhab} with $w_h=v_h$ we obtain
\begin{multline*}
\widehat{\beta}\|v_{h}\|_{1,\O}^{2} \leq \CB_{h}(v_{h},v_{h}) = \CB_{h}(u_{h},v_{h}) - \CB_{h}(u_{I},v_{h}) \\
= c_{h}(P_{h}f,v_{h}) - \displaystyle{\sum_{E \in \mathcal{T}_{h}} \left[a_{h}^{E}(u_{I}, v_{h}) + b_{h}^{E}(u_{I},v_{h})\right]} \\
= \underbrace{c_{h}(P_{h}f,v_{h}) - c(f,v_{h})}_{\textbf{(I)}} + \underbrace{\displaystyle{\sum_{E \in \mathcal{T}_{h}} \left[a^{E}(u,v_{h}) - a_{h}^{E}(u_{I}, v_{h})\right]}}_{\textbf{(II)}} \\
+ \underbrace{\displaystyle{\sum_{E \in \mathcal{T}_{h}} \left[b^{E}(u,v_{h}) - b_{h}^{E}(u_{I},v_{h})\right]}}_{\textbf{(III)}}.
\end{multline*}
Now, we need to estimate the contributions on the right hand side of the above estimate. Observe that \textbf{(I)} can be estimated using the same arguments  on the  proof of \cite[Lemma 4.2]{MR4050542}, in order to obtain
\begin{equation}\label{eqI}
\textbf{(I)} \lesssim h^{s}\|v_{h}\|_{1,\O}\left(\|f - f_{I}\|_{0,\O} + \|f - f_{\pi}\|_{0,\O}\right),
\end{equation}
where $f_{\pi} \in L^{2}(\O,\mathbb{C})$ with $f_{\pi}\lvert_{E} \in \mathbb{P}_{1}(E)$ is chosen such that Lemma \ref{eq:polyapprox} is satisfied. On the other hand, adding and substracting $\Pi^{E}v_{h}$ in \textbf{(III)}, using the definition of $\PiK$ in \textbf{(II)} and the Cauchy-Schwarz inequality, is easy to see that 
\begin{equation}\label{eqII}
\textbf{(II)} + \textbf{(III)} \lesssim h^{s}\|f\|_{1,\O}\|v_{h}\|_{1,\O} + \|v_{h}\|_{1,\O}\left(|u_{I} - \Pi^{\nabla} u_{I}|_{1,h} + |u - u_{I}|_{1,\O}\right),
\end{equation}
where $u_{\pi} \in L^{2}(\O,\mathbb{C})$ with $u_{\pi}\lvert_{E} \in \mathbb{P}_{1}(E)$ is chosen such that Lemma \ref{eq:polyapprox} is satisfied, and the hidden constant in the previous estimate depends on $\kappa$ and $\vartheta$. Then, applying Lemmas \ref{eq:polyapprox} and \ref{eq:interpolant} on \eqref{eqI} and \eqref{eqII}, together with the boundedness of $T$, we can deduce that $\|v_{h}\|_{1,\O} \lesssim h^{s}\|f\|_{1,\O}$.
Thus, we obtain the desired estimate. Analogous arguments can be used for the term $\|T^{*} - \widehat{T}_{h}^{*}\|_{1,\O}$.
\end{proof}

As a consequence of Lemma \ref{col:conv_in_norm}, we have the following estimates for primal and dual eigenfunctions
\begin{equation}\label{eq:aprioriestimates}
\|u - u_{h}\|_{1,\O} \lesssim h^{s}, \quad \|u^{*} - u_{h}^{*}\|_{1,\O} \lesssim h^{s^{*}},
\end{equation}
where the hidden constants depend on $\kappa$ and $\vartheta$, but not on $h$.

Let us denote by  $\mathfrak{E}$ the eigenspace associated to the eigenvalue $\mu$ of $T$, $\mu_{h}^{(i)}$, $i = 1,\ldots, m$ be discrete eigenvalues of $\widehat{T}_{h}$ (repeated according their respective multiplicities) such that $\mu_{h}^{(i)}$ converges to $\mu$, $1\leq i \leq m$ and let $\mathfrak{E}_{h}$ be the direct sum of their associated eigenspaces. By the relation between the eigenvalues of $T$ and $T^{*}$, let us denote by $\mathfrak{E}^{*}$ the dual of $\mathfrak{E}$.
The following result provides an error estimate for the eigenvalues of our spectral problem. Let us remark that this result depends strongly on the properties of the primal and dual problems.
\begin{theorem}\label{eq:quadord}
The following estimate holds
\begin{equation*}
|\mu - \widehat{\mu}_{h}| \lesssim h^{s+s^{*}}, 
\end{equation*}
where  $\widehat{\mu}_{h}:=\dfrac{1}{m}\sum_{k=1}^m\mu_h^(k)$ and the hidden constant depends on $\kappa$ and $\vartheta$, but not on $h$ and the exponents $s, s^*>0$ are the ones provided by \eqref{eq:regularity} and \eqref{eq:regularity_dual}.
\end{theorem}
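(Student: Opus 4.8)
The plan is to follow the abstract spectral approximation theory of Babu\v{s}ka and Osborn for non-selfadjoint compact operators, taking care of the extra VEM inconsistency terms that are absent in the conforming setting. Since Lemma~\ref{col:conv_in_norm} gives $\|(T-\widehat T_h)|_{\mathfrak E}\|_{1,\O}\lesssim h^{s}$ and $\|(T^*-\widehat T^*_h)|_{\mathfrak E^*}\|_{1,\O}\lesssim h^{s^*}$, the operators converge in norm, so for $h$ small enough the $m$ discrete eigenvalues $\mu_h^{(k)}$ cluster around $\mu$ and $\dim\mathfrak E_h=\dim\mathfrak E=m$. First I would fix a basis $\{\phi_j\}_{j=1}^m$ of $\mathfrak E$ and a dual basis $\{\phi_j^*\}_{j=1}^m$ of $\mathfrak E^*$ normalized by $c(\phi_i,\phi_j^*)=\delta_{ij}$ (recall that $T$ and $T^*$ are adjoint with respect to the pairing $c(\cdot,\cdot)$, since $c(Tf,g)=c(f,T^*g)$). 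By \eqref{eq:regularity} and \eqref{eq:regularity_dual} each $\phi_j\in H^{1+s}(\O,\mathbb C)$ and each $\phi_j^*\in H^{1+s^*}(\O,\mathbb C)$.

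Next I would invoke Osborn's error representation for the arithmetic mean of the cluster, namely
\begin{equation*}
\mu-\widehat\mu_h=\frac1m\sum_{j=1}^m c\bigl((T-\widehat T_h)\phi_j,\phi_j^*\bigr)+R_h,
\qquad
|R_h|\lesssim \|(T-\widehat T_h)|_{\mathfrak E}\|_{1,\O}\,\|(T^*-\widehat T^*_h)|_{\mathfrak E^*}\|_{1,\O}.
\end{equation*}
The remainder $R_h$ is already of the desired order $h^{s+s^*}$ by Lemma~\ref{col:conv_in_norm}, so the whole difficulty is concentrated in the leading term, which is only linear in $T-\widehat T_h$ and, estimated crudely, would give the suboptimal rate $h^{s}$.

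The crux is therefore to show $|c((T-\widehat T_h)\phi_j,\phi_j^*)|\lesssim h^{s+s^*}$ for each $j$ by a duality argument. Writing $u:=T\phi_j$, $u_h:=\widehat T_h\phi_j$, $u^*:=T^*\phi_j^*$ and $u_h^*:=\widehat T^*_h\phi_j^*$, I would first use that $\phi_j^*$ is a dual eigenfunction to convert the $c$-pairing into a $\CB$-pairing (up to a bounded scalar factor), reducing matters to estimating $\CB(u-u_h,\phi_j^*)$. Subtracting the discrete dual solution, $\CB(u-u_h,\phi_j^*)=\CB(u-u_h,\phi_j^*-u_h^*)+\CB(u-u_h,u_h^*)$, the first summand is bounded by $\|u-u_h\|_{1,\O}\,\|\phi_j^*-u_h^*\|_{1,\O}\lesssim h^{s}h^{s^*}$ via Lemma~\ref{col:conv_in_norm} and the dual estimate in \eqref{eq:aprioriestimates}, exactly as in the symmetric theory but with the primal and dual rates decoupled. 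The \emph{main obstacle} is the second summand: in contrast with conforming finite elements, Galerkin orthogonality is broken by the VEM, and using the continuous and discrete source problems \eqref{eq:source_discrete}--\eqref{eq:source_discrete_dual} one finds that $\CB(u-u_h,u_h^*)$ reduces to a sum of consistency terms of the form $c(\phi_j,u_h^*)-c_h(P_h\phi_j,u_h^*)$ and $\CB_h(u_h,u_h^*)-\CB(u_h,u_h^*)$. I would control these exactly as in the proof of Lemma~\ref{col:conv_in_norm}: inserting the polynomial and interpolant approximants $u_\pi,u_I$ and their dual analogues, exploiting the polynomial consistency of $a_h^E,b_h^E,c_h^E$, the stabilization bounds \eqref{eq:staba}--\eqref{eq:stabc} and the projection estimates \eqref{eq:bestapproximation}--\eqref{eq:errorprojec} together with Lemmas~\ref{eq:polyapprox} and~\ref{eq:interpolant}, and using that $\vartheta$ is divergence free to keep the convective contribution consistent; the point is to arrange every residual so that it carries one factor $h^{s}$ coming from the primal data and one factor $h^{s^*}$ coming from the dual data. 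Combining the bound on the leading term with that on $R_h$ yields $|\mu-\widehat\mu_h|\lesssim h^{s+s^*}$, with hidden constant depending only on $\kappa$ and $\vartheta$.
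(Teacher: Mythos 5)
Your proposal follows essentially the same route as the paper's proof: the Babu\v{s}ka--Osborn representation with the quadratic remainder absorbed by Lemma~\ref{col:conv_in_norm}, and the leading term $\CB((T-\widehat{T}_{h})u_{k},u_{k}^{*})$ split into a product-of-errors piece plus the VEM consistency defects $c(\cdot,\cdot)-c_{h}(P_{h}\cdot,\cdot)$ and $\CB_{h}-\CB$, each arranged to carry one factor $h^{s}$ and one $h^{s^{*}}$ (the paper subtracts the interpolant $(u_{k}^{*})_{I}$ where you subtract a discrete dual approximation, and normalizes the dual basis via $\CB$ rather than $c$; these differences are immaterial). One slip to repair: with your definition $u_{h}^{*}:=\widehat{T}_{h}^{*}\phi_{j}^{*}$ the factor $\|\phi_{j}^{*}-u_{h}^{*}\|_{1,\O}$ does \emph{not} tend to zero (it tends to $|1-\overline{\mu^{*}}|\,\|\phi_{j}^{*}\|_{1,\O}$), so you must instead subtract $\overline{\mu^{*}}^{-1}\widehat{T}_{h}^{*}\phi_{j}^{*}$, the interpolant of $\phi_{j}^{*}$ (as the paper does), or the discrete dual eigenfunction, after which Lemma~\ref{col:conv_in_norm} or \eqref{eq:aprioriestimates} supplies the claimed $h^{s^{*}}$ factor and the rest of your argument goes through unchanged.
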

\begin{proof}
Let $\{u_{k}\}_{k=1}^{m}$ be such that $Tu_{k} = \mu u_{k}$ for $k=1,\ldots,m$, and let $\{u_{k}^{*}\}_{k=1}^{m}$  be the dual basis of $\{u_{k}\}_{k=1}^{m}$, such that $\CB(u_{k},u_{l}^{*}) = \delta_{k,l}$, where $\delta_{k,l}$ represents the Kronecker delta. From \cite[Theorem 7.2]{BO}, the following identity holds true 
\begin{equation*}
|\mu - \mu_{h}| \lesssim \dfrac{1}{m}\displaystyle{\sum_{k=1}^{m} \left\lvert\left\langle (T-\widehat{T}_{h})u_{k},u_{k}^{*}\right\rangle\right\lvert} + \|T-\widehat{T}_{h}\|_{\mathcal{L}(V)}\|T^{*}-\widehat{T}_{h}^{*}\|_{\mathcal{L}(V^{*})}.
\end{equation*}
Observe that the last two terms can be estimated directly from \eqref{eq:aprioriestimates}. Hence, we need to bound the first term in the above estimate. To do this task, we observe that the following identity can be obtained
\begin{multline*}
\left\langle (T-\widehat{T}_{h})u_{k},u_{k}^{*}\right\rangle  = \CB((T-\widehat{T}_{h})u_{k},u_{k}^{*}) \\
= \CB((T-\widehat{T}_{h})u_{k},u_{k}^{*} - (u_{k}^{*})_{I}) + \CB(Tu_{k},(u_{k}^{*})_{I}) - \CB(\widehat{T}_{h}u_{k},(u_{k}^{*})_{I}) \\
= \underbrace{\CB((T-\widehat{T}_{h})u_{k},u_{k}^{*} - (u_{k}^{*})_{I})}_{(\textrm{I})} + \underbrace{[c(u_{k},(u_{k}^{*})_{I}) - c_{h}(P_{h}u_{k}, (u_{k}^{*})_{I})]}_{(\textrm{II})} \\
+ \underbrace{[\CB_{h}(\widehat{T}_{h}u_{k},(u_{k}^{*})_{I}) - \CB(\widehat{T}_{h}u_{k},(u_{k}^{*})_{I})]}_{(\textrm{III})}.
\end{multline*}
We now estimate the contributions on the right hand side in the above estimate. For $(\textrm{I})$, using Cauchy-Schwarz inequality and Lemmas \ref{col:conv_in_norm} and \ref{eq:interpolant}, we have
\begin{multline}\label{eq:termC1}
(\textrm{I}) \leq C_{\kappa,\vartheta}\|(T-\widehat{T}_{h})u_{k}\|_{1,\O}\|u_{k}^{*} - (u_{k}^{*})_{I}\|_{1,\O} \lesssim h^{s+s^{*}}|u_{k}|_{1+s,\O}|u_{k}^{*}|_{1,\O} \\
\lesssim h^{s+s^{*}}|u_{k}|_{1,\O}|u_{k}^{*}|_{1,\O}.
\end{multline}
For $(\textrm{II})$, using \eqref{eq:errorprojec} and the stability of $P_{h}$ and $(u_{k}^{*})_{I}$, we have
\begin{multline}\label{eq:termC2}
(\textrm{II}) = c(P_{h}u_{k},(u_{k}^{*})_{I}) - c_{h}(P_{h}u_{k}, (u_{k}^{*})_{I}) \\ 
\leq \displaystyle{\sum_{E \in \mathcal{T}_{h}} \|P_{h}u_{k} - \Pi^{E}P_{h}u_{k}\|_{0,E}\|(u_{k}^{*})_{I} - \Pi^{E}(u_{k}^{*})_{I}\|_{0,E}} \lesssim h^{s+s^{*}}|u_{k}|_{1,\O}|u_{k}^{*}|_{1,\O}.
\end{multline}
For $(\textrm{III})$, using triangle inequality we have
\begin{multline*}
(\textrm{III})\leq \displaystyle{\sum_{E \in \mathcal{T}_{h}} |\CB^{E}_{h}(\widehat{T}_{h}u_{k},(u_{k}^{*})_{I}) - \CB^{E}(\widehat{T}_{h}u_{k},(u_{k}^{*})_{I})|} \\
\leq \underbrace{\displaystyle{\sum_{E \in \mathcal{T}_{h}} |a^{E}_{h}(\widehat{T}_{h}u_{k},(u_{k}^{*})_{I}) - a^{E}(\widehat{T}_{h}u_{k},(u_{k}^{*})_{I})|}}_{(\textrm{IV})} \\ 
+  \underbrace{\displaystyle{\sum_{E \in \mathcal{T}_{h}} |b^{E}_{h}(\widehat{T}_{h}u_{k},(u_{k}^{*})_{I}) - b^{E}(\widehat{T}_{h}u_{k},(u_{k}^{*})_{I})|}}_{(\textrm{V})}.
\end{multline*}
We now need to estimate $(\textrm{IV})$ and $(\textrm{V})$. For $(\textrm{IV})$, using Cauchy-Schwarz inequality  and \eqref{eq:staba}  we have
\begin{multline*}
(\textrm{IV}) \leq C_{\kappa}\left(\displaystyle{\sum_{E\in\mathcal{T}_{h}} a^{E}(\PiK \widehat{T}_{h}u_{k} - \widehat{T}_{h}u_{k}, \PiK (u_{k}^{*})_{I} - (u_{k}^{*})_{I})}\right. \\
\left. + \displaystyle{\sum_{E\in\mathcal{T}_{h}} S^{E}(\widehat{T}_{h}u_{k} - \PiK \widehat{T}_{h}u_{k}, (u_{k}^{*})_{I} - \PiK (u_{k}^{*})_{I})}\right) \\
\lesssim \left(\displaystyle{\sum_{E\in\mathcal{T}_{h}} |\widehat{T}_{h}u_{k} - \PiK \widehat{T}_{h}u_{k}|_{1,E}^{2}}\right)^{1/2}\left(\displaystyle{\sum_{E\in\mathcal{T}_{h}} |(u_{k}^{*})_{I} - \PiK (u_{k}^{*})_{I}|_{1,E}^{2}}\right)^{1/2} \\
\lesssim |(\widehat{T}_{h}u_{k} - \Pi^{\nabla}\widehat{T}_{h}u_{k}|_{1,h}|(u_{k}^*)_{I} - \Pi^{\nabla} (u_{k}^{*})_{I}|_{1,h}.
\end{multline*}
Now using triangle inequality and the stability of $\PiK$, together with \eqref{eq:aprioriestimates} and Lemma \ref{eq:polyapprox}, we obtain
\begin{equation*}
|\widehat{T}_{h}u_{k} - \Pi^{\nabla}\widehat{T}_{h}u_{k}|_{1,h} \lesssim \|(T-\widehat{T}_{h})u_{k}\|_{1,\O} + |Tu_{k} - \Pi^{\nabla} Tu_{k}|_{1,h} \lesssim h^{s}|u_{k}|_{1+s,\O}.
\end{equation*} 
On the other hand, using triangle inequality, the stability of $\PiK$ and Lemmas \ref{eq:interpolant} and \ref{eq:polyapprox}, we obtain
\begin{equation*}
|(u_{k}^{*})_{I} - \Pi^{\nabla}(u_{k}^{*})_{I}|_{1,h} \lesssim |u_{k}^{*} - (u_{k}^{*})_{I}|_{1,\O} + |u_{k}^{*} - \Pi^{\nabla} u_{k}^{*}|_{1,h} \lesssim h^{s^{*}}|u_{k}^{*}|_{1+s^{*},\O}.
\end{equation*}
Thus, we obtain $(\textrm{III}) \lesssim h^{s+s^{*}}|u_{k}|_{1,\O}|u_{k}^{*}|_{1,\O}$. Finally, for $(\textrm{V})$, we have
\begin{multline*}
b^{E}(\widehat{T}_{h}u_{k},(u_{k}^{*})_{I}) - b_{h}^{E}(\widehat{T}_{h}u_{k},(u_{k}^{*})_{I}) = \left(\vartheta(\mathbf{x})\cdot\nabla \widehat{T}_{h}u_{k},(u_{k}^{*})_{I}\right)_{0,E} \\ 
- \left(\vartheta(\mathbf{x})\cdot\nabla\PiK \widehat{T}_{h}u_{k},\Pi^{E}(u_{k}^{*})_{I}\right)_{0,E} \\
= \left(\vartheta(\mathbf{x})\cdot\nabla \widehat{T}_{h}u_{k}, (u_{k}^{*})_{I} - \Pi^{E}(u_{k}^{*})_{I}\right)_{0,E} \\ 
+ \left(\vartheta(\mathbf{x})\cdot(\nabla \widehat{T}_{h}u_{k} - \nabla\PiK \widehat{T}u_{k}), \Pi^{E}(u_{k}^{*})_{I}\right)_{0,E} \\
= \left(\vartheta(\mathbf{x})\cdot\nabla \widehat{T}_{h}u_{k} - \Pi^{E}(\vartheta(\mathbf{x})\cdot\nabla \widehat{T}_{h}u_{k}), (u_{k}^{*})_{I} - \Pi^{E}(u_{k}^{*})_{I}\right)_{0,E} \\ 
+ \left(\nabla \widehat{T}_{h}u_{k} - \nabla\PiK \widehat{T}_{h}u_{k}, \vartheta(\mathbf{x})\Pi^{E}(u_{k}^{*})_{I} - \vartheta(\mathbf{x})(u_{k}^{*})_{I}\right)_{0,E} \\
+ \left(\nabla \widehat{T}_{h}u_{k} - \nabla\PiK \widehat{T}_{h}u_{k}, \vartheta(\mathbf{x})(u_{k}^{*})_{I} - \Pi^{E}(\vartheta(\mathbf{x})(u_{k}^{*})_{I})\right)_{0,E}.
\end{multline*}
Similar to the estimate in \textrm(IV), using the Cauchy-Schwarz and triangle inequalities on the above estimate, together with Lemmas \ref{eq:polyapprox} and \ref{col:conv_in_norm} and the stability of the interpolant, we obtain
\begin{multline*}
\textrm{(V)} \lesssim \|\vartheta(\mathbf{x})\cdot\nabla\widehat{T}_{h}u_{k} - \Pi(\vartheta(\mathbf{x})\cdot\nabla\widehat{T}_{h}u_{k})\|_{0,\O}\|(u_{k}^{*})_{I} - \Pi(u_{k}^{*})_{I}\|_{0,\O} \\ 
 |\widehat{T}_{h}u_{k} - \Pi^{\nabla}\widehat{T}_{h}u_{k}|_{1,h}\left(\|(u_{k}^{*})_{I} - \Pi (u_{k}^{*})_{I}\|_{0,\O}\right. \\ 
\left. + \|\vartheta(\mathbf{x})(u_{k}^{*})_{I} - \Pi (\vartheta(\mathbf{x})(u_{k}^{*})_{I})\|_{0,\O}\right) \leq C_{\vartheta}h^{s+s^{*}}|u_{k}|_{1,\O}|u_{k}^{*}|_{1,\O},
\end{multline*}

 allowing us to  conclude that 
\begin{equation}\label{eq:termC3}
(\textrm{III})\lesssim h^{s+s^{*}}|u_{k}|_{1,\O}|u_{k}^{*}|_{1,\O},
\end{equation}
where the hidden constant depends on $\kappa$ and $\vartheta$. Hence, combining \eqref{eq:termC1}, \eqref{eq:termC2} and \eqref{eq:termC3}, we conclude the proof.
\end{proof}
\begin{remark}
The estimation for the eigenvalue error  $|\lambda-\widehat{\lambda}_h|$ by means of $\widehat{\lambda}:=\dfrac{1}{m}\sum_{k=1}^{m} \lambda_h^{(k)}$,  where $\lambda_h^{(k)}=1/\mu_h^{(k)}$ is analogous to the estimation shown above.
\end{remark}
Now we need an error estimates in $L^{2}$ norm.

The following result is based on a classical duality argument.
\begin{lemma}\label{eq:duality1}
Let $f \in \mathfrak{E}$ be such that $\widehat{u} := Tf$ and $\widehat{u}_{h} := \widehat{T}_{h}f$, and let $f^{*} \in \mathfrak{E}^{*}$ be such that $\widehat{u}^{*} := T^{*}f$ and $\widehat{u}_{h}^{*} := \widehat{T}_{h}^{*}f$. Then, the following results hold:
\begin{enumerate}
\item There exists $s >0$ such that 
\begin{multline*}
\|\widehat{u} - \widehat{u}_{h}\|_{0,\O}\lesssim h^{s}\left(\|\widehat{u} - \widehat{u}_{h}\|_{1,\O} + |\widehat{u} - \Pi^{\nabla}\widehat{u}_{h}|_{1,h}\right. \\ 
\left. + \|\vartheta(\mathbf{x})\cdot\nabla \widehat{u}_{h} - \Pi(\vartheta(\mathbf{x})\cdot\nabla \widehat{u}_{h})\|_{0,\O}\right),
\end{multline*}
\item There exists $s^{*}>0$ such that
\begin{multline*}
\|\widehat{u}^* - \widehat{u}_{h}^{*}\|_{0,\O}\lesssim h^{s^*}\left(\|\widehat{u}^* - \widehat{u}_{h}^*\|_{1,\O} + |\widehat{u}^* - \Pi^{\nabla}\widehat{u}_{h}^*|_{1,h}\right. \\
\left. + \|\vartheta(\mathbf{x})\cdot\nabla \widehat{u}_{h}^{*} - \Pi(\vartheta(\mathbf{x})\cdot\nabla \widehat{u}_{h}^{*})\|_{0,\O}\right),
\end{multline*}
\end{enumerate}
where the hidden constants on each estimate depends on $\kappa$ and $\vartheta$, but not on $h$.
\end{lemma}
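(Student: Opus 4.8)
The plan is to run an Aubin--Nitsche duality argument whose structure mirrors the proof of Theorem~\ref{eq:quadord}. I treat only the primal estimate (item~1); item~2 follows verbatim after interchanging the roles of the primal and dual operators. Set $e:=\widehat{u}-\widehat{u}_{h}$ and let $\psi:=T^{*}e$ solve the dual source problem $\CB(v,\psi)=c(v,e)$ for all $v\in V$. By the regularity of the dual source problem we have $\psi\in H^{1+s^{*}}(\O,\mathbb{C})$ with $\|\psi\|_{1+s^{*},\O}\lesssim\|e\|_{0,\O}$ (cf.\ \eqref{eq:regularity_dual}). Choosing $v=e$ gives the identity $\|e\|_{0,\O}^{2}=c(e,e)=\CB(e,\psi)$, so the whole task reduces to bounding $\CB(e,\psi)$ by $h^{s^{*}}\|e\|_{0,\O}$ times the three quantities on the right-hand side. (The exponent produced is the \emph{dual} regularity $s^{*}$; I keep the symbol $s$ of the statement as a generic positive exponent, and symmetrically $s$ will play the role for item~2.)

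Next I introduce the virtual interpolant $\psi_{I}\in V_{h}$ of $\psi$ and split $\CB(e,\psi)=\CB(e,\psi-\psi_{I})+\CB(e,\psi_{I})$. For the first summand, continuity of $\CB$ with Lemma~\ref{eq:interpolant} gives $|\CB(e,\psi-\psi_{I})|\lesssim\|e\|_{1,\O}\,\|\psi-\psi_{I}\|_{1,\O}\lesssim h^{s^{*}}\|e\|_{1,\O}\|e\|_{0,\O}$, which produces exactly the first term $\|\widehat{u}-\widehat{u}_{h}\|_{1,\O}$. For the second summand I use both source problems: since $\widehat{u}=Tf$ satisfies $\CB(\widehat{u},\psi_{I})=c(f,\psi_{I})$, since $\widehat{u}_{h}=\widehat{T}_{h}f$ satisfies $\CB_{h}(\widehat{u}_{h},\psi_{I})=c_{h}(P_{h}f,\psi_{I})$, and since $c(f,\psi_{I})=c(P_{h}f,\psi_{I})$ by the definition of $P_{h}$, I obtain the consistency decomposition
\[
\CB(e,\psi_{I})=\big[c(P_{h}f,\psi_{I})-c_{h}(P_{h}f,\psi_{I})\big]-\big[\CB(\widehat{u}_{h},\psi_{I})-\CB_{h}(\widehat{u}_{h},\psi_{I})\big].
\]

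It remains to estimate the two bracketed consistency errors, which is the heart of the proof and reuses the manipulations of terms $(\textrm{IV})$ and $(\textrm{V})$ of Theorem~\ref{eq:quadord}. I split the $\CB_{h}$ bracket into its $a$- and $b$-parts. In the $a$-part I add and subtract $\PiK$ in both arguments and invoke the polynomial orthogonality of $\PiK$ together with the stability \eqref{eq:staba} of $S^{E}$, so that element by element $|a^{E}(\widehat{u}_{h},\psi_{I})-a_{h}^{E}(\widehat{u}_{h},\psi_{I})|\lesssim|\widehat{u}_{h}-\PiK\widehat{u}_{h}|_{1,E}\,|\psi_{I}-\PiK\psi_{I}|_{1,E}$; summing, controlling $|\psi_{I}-\Pi^{\nabla}\psi_{I}|_{1,h}\lesssim h^{s^{*}}\|e\|_{0,\O}$ (triangle inequality, stability of $\PiK$, Lemmas~\ref{eq:interpolant} and \ref{eq:polyapprox}), and writing $|\widehat{u}_{h}-\Pi^{\nabla}\widehat{u}_{h}|_{1,h}\le\|\widehat{u}-\widehat{u}_{h}\|_{1,\O}+|\widehat{u}-\Pi^{\nabla}\widehat{u}_{h}|_{1,h}$, yields exactly the first two target terms. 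For the $b$-part I insert $\Pio$ and expand $b^{E}-b_{h}^{E}$ into three $L^{2}$ pairings as in term $(\textrm{V})$; Cauchy--Schwarz and \eqref{eq:errorprojec} turn these into $h^{s^{*}}\|e\|_{0,\O}$ times $|\widehat{u}_{h}-\Pi^{\nabla}\widehat{u}_{h}|_{1,h}$ and $\|\vartheta(\mathbf{x})\cdot\nabla\widehat{u}_{h}-\Pi(\vartheta(\mathbf{x})\cdot\nabla\widehat{u}_{h})\|_{0,\O}$, the third target term. The $c$-bracket is dominated by $\sum_{E}\|P_{h}f-\Pio P_{h}f\|_{0,E}\,\|\psi_{I}-\Pio\psi_{I}\|_{0,E}$ and, using $f\in\mathfrak{E}\subset H^{1+s}(\O,\mathbb{C})$, \eqref{eq:errorprojec} and the stability of $P_{h}$, is of higher order in $h$ and is absorbed into the leading terms. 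Dividing through by $\|e\|_{0,\O}$ gives the claim.

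I expect the main obstacle to be the $b$-part of the $\CB_{h}$-consistency error: one must juggle the three $L^{2}$ pairings and the different projectors $\PiK,\Pio$ so that the non-symmetric convection contribution collapses exactly onto the seminorm $|\widehat{u}-\Pi^{\nabla}\widehat{u}_{h}|_{1,h}$ and the projection defect $\|\vartheta(\mathbf{x})\cdot\nabla\widehat{u}_{h}-\Pi(\vartheta(\mathbf{x})\cdot\nabla\widehat{u}_{h})\|_{0,\O}$, rather than onto quantities not controlled by the a~priori analysis. A secondary care point is bookkeeping on regularity: the governing exponent here is that of the dual source problem, so it genuinely stems from $s^{*}$, and one must verify at each step that the interpolation and polynomial approximation lemmas are applied to the correct ($\psi$ versus $\widehat{u}$) function.
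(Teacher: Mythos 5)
Your proposal follows the paper's proof almost verbatim: the same auxiliary dual problem $\CB(v,w)=c(v,\widehat{u}-\widehat{u}_{h})$ with its $H^{1+s}$ regularity, the same splitting $\CB(e,w-w_{I})+\CB(e,w_{I})$, the same consistency decomposition of $\CB(e,w_{I})$ into a $c$-bracket and a $\CB_{h}$-bracket, and the same element-by-element treatment of the $a$- and $b$-parts that produces the three quantities on the right-hand side. Your remark that the governing exponent is really the dual regularity $s^{*}$ is a fair (and arguably more careful) bookkeeping point; since the statement only asserts existence of some positive exponent, nothing is lost.

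The one place you deviate is the $c$-bracket $c(P_{h}f,w_{I})-c_{h}(P_{h}f,w_{I})$. The paper bounds it, via the argument of \cite[Lemma 4.10]{MR4050542}, by $h^{s}\bigl(\|\widehat{u}-\widehat{u}_{h}\|_{1,\O}+|\widehat{u}-\Pi\widehat{u}_{h}|_{1,h}\bigr)\|\widehat{u}-\widehat{u}_{h}\|_{0,\O}$, i.e.\ by exactly the quantities appearing in the statement. You instead bound it by $\sum_{E}\|P_{h}f-\Pi^{E}P_{h}f\|_{0,E}\|w_{I}-\Pi^{E}w_{I}\|_{0,E}\lesssim h^{2+s^{*}}\|f\|_{1,\O}\|\widehat{u}-\widehat{u}_{h}\|_{0,\O}$ and declare it ``absorbed into the leading terms.'' That absorption is not justified as stated: after dividing by $\|\widehat{u}-\widehat{u}_{h}\|_{0,\O}$ you are left with an additive term $h^{2+s^{*}}\|f\|_{1,\O}$, and to hide it inside $h^{s^{*}}\|\widehat{u}-\widehat{u}_{h}\|_{1,\O}$ you would need a \emph{lower} bound on the error, which you do not have. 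What you actually prove is the lemma with an extra higher-order term $h^{2+s^{*}}\|f\|_{1,\O}$ on the right-hand side --- harmless for the downstream use in Lemma~\ref{eq:duality3} and Theorem~\ref{teoremamejororden}, but not literally the stated inequality. Either adopt the paper's estimate of the $c$-bracket, or state the conclusion with the additional term.
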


\begin{proof}
Since the proof for the primal and dual estimates are based in the same arguments, we only present the  proof for the primal estimate. Let us consider the following auxilliary problem: find $w \in V$ such that
\begin{equation}\label{eq:auxiliaryproblem}
\CB(v,w) = c(v,\widehat{u}-\widehat{u}_{h}), \quad \forall v \in V.
\end{equation}
Observe that the solution $w$ of \eqref{eq:auxiliaryproblem} satisfies $\|w\|_{1+s,\O} \lesssim \|\widehat{u}-\widehat{u}_{h}\|_{0,\O}$. 
Then, replacing $v = \widehat{u}-\widehat{u}_{h}$ in \eqref{eq:auxiliaryproblem} and adding and subtracting $w_{I}$, we obtain
\begin{multline}\label{errorenL2}
\|\widehat{u}-\widehat{u}_{h}\|_{0,\O}^{2} = c(\widehat{u}-\widehat{u}_{h},\widehat{u}-\widehat{u}_{h}) = \underbrace{\CB(\widehat{u}-\widehat{u}_{h},w-w_{I})}_{\textbf{(I)}} + \underbrace{\CB(\widehat{u}-\widehat{u}_{h},w_{I})}_{\textbf{(II)}},
\end{multline}
being $w_{I} \in V_{h}$ the interpolant of $w$. Now our task is to estimate the contributions on the right hand side in the above equation. For \textbf{(I)}, using the Cauchy-Schwarz inequality and the additional regularity for $w$, we obtain
\begin{equation}\label{(I)}
\textbf{(I)} \leq C_{\kappa,\vartheta}|w-w_{I}|_{1,\O}|\widehat{u}-\widehat{u}_{h}|_{1,\O} \lesssim h^{s}\|\widehat{u}-\widehat{u}_{h}\|_{0,\O}\|\widehat{u}-\widehat{u}_{h}\|_{1,\O}.
\end{equation}
On the other hand, for \textbf{(II)} we have
\begin{multline}\label{(II)}
\textbf{(II)} = \CB(\widehat{u}-\widehat{u}_{h},w_{I}) = c(f,w_{I}) - B(\widehat{u}_{h},w_{I}) \\ 
= \underbrace{c(f,w_{I}) - c_{h}(P_{h}f,w_{I})}_{\textbf{(III)}} + \underbrace{\CB_{h}(\widehat{u}_{h},w_{I}) - \CB(\widehat{u}_{h},w_{I})}_{\textbf{(IV)}}.
\end{multline}
Now, for \textbf{(III)} we use the same arguments in the proof of \cite[Lemma 4.10]{MR4050542} and the additional regularity of $w$, in order to obtain
\begin{equation}
\textbf{(III)} \lesssim h^{s}\left(\|\widehat{u} - \widehat{u}_{h}\|_{1,\O} + |\widehat{u} - \Pi \widehat{u}_{h}|_{1,h} \right)\|\widehat{u}-\widehat{u}_{h}\|_{0,\O}.
\end{equation}\label{(III)}
On the other hand, to estimate \textbf{(IV)}, we proceeding as in Theorem \ref{eq:quadord}. Then, applying triangle inequality, Lemmas \ref{eq:polyapprox} and \ref{eq:interpolant} and the stability of the interpolant, we obtain
\begin{multline}\label{eq:aa}
|a_{h}^{E}(\widehat{u}_{h},w_{I}) - a^{E}(\widehat{u}_{h},w_{I})| \leq C_{\kappa}|\widehat{u}_{h}-\Pi^{\nabla}\widehat{u}_{h}|_{1,E}|w_{I} - \Pi^{\nabla}w_{I}|_{1,E} \\
\lesssim \left(\|\widehat{u}-\widehat{u}_{h}\|_{1,E} + |\widehat{u} - \Pi^{\nabla} \widehat{u}_{h}|_{1,E}\right)\left(|w - w_{I}|_{1,E} + |w - \Pi^{\nabla}w|_{1,E}\right) \\
\lesssim h^{s}\left(\|\widehat{u}-\widehat{u}_{h}\|_{1,E} + |\widehat{u} - \Pi^{\nabla} \widehat{u}_{h}|_{1,E}\right)|w|_{1+s,E},
\end{multline}
and
\begin{multline}\label{eq:bb}
|b_{h}^{E}(\widehat{u}_{h},w_{I}) - b^{E}(\widehat{u}_{h},w_{I})| \leq C_{\vartheta}\|\vartheta(\mathbf{x})\cdot\nabla \widehat{u}_{h} - \Pi^{E}(\vartheta(\mathbf{x})\cdot\nabla \widehat{u}_{h})\|_{0,E}\|w_{I} - \Pi^{E}w_{I}\|_{0,E} \\ 
+ |\widehat{u}_{h} - \PiK \widehat{u}_{h}|_{1,E}\|\vartheta(\mathbf{x})w_{I} - \vartheta(\mathbf{x})\Pi^{E}w_{I}\|_{0,E} \\
+ |\widehat{u}_{h} - \PiK \widehat{u}_{h}|_{1,E}\|\vartheta(\mathbf{x})w_{I} - \Pi^{E}(\vartheta(\mathbf{x})w_{I})\|_{0,E} \\
\lesssim h^{s}\left(\|\widehat{u}-\widehat{u}_{h}\|_{1,E} + |\widehat{u}-\PiK\widehat{u}_{h}|_{1,E}\right. \\ 
\left.+ \|\vartheta(\mathbf{x})\cdot\nabla \widehat{u}_{h} - \Pi^{E}(\vartheta(\mathbf{x})\cdot\nabla \widehat{u}_{h})\|_{0,E}\right)|w|_{1+s,E}.
\end{multline}
Therefore, gathering \eqref{eq:aa} and \eqref{eq:bb}, using the additional regularity of $w$ and summing over all polygons, we conclude that
\begin{multline}\label{(IV)}
\textbf{(IV)} \lesssim h^{s}\left(\|\widehat{u}-\widehat{u}_{h}\|_{1,\O} + |\widehat{u}-\Pi^{\nabla}\widehat{u}_{h}|_{1,h}\right. \\
\left. + \|\vartheta(\mathbf{x})\cdot\nabla \widehat{u}_{h} - \Pi(\vartheta(\mathbf{x})\cdot\nabla \widehat{u}_{h})\|_{0,\O}\right)\|\widehat{u}-\widehat{u}_{h}\|_{0,\O}.
\end{multline}
Finally, combining the estimates \eqref{(I)}-\eqref{(IV)} with \eqref{errorenL2}, we conclude the proof. Similar arguments can be used for the second estimate.
\end{proof}

Now, we are in position to define a solution operator on the space $L^{2}(\O,\mathbb{C})$ as $\widetilde{T} : L^{2}(\O,\mathbb{C}) \rightarrow L^{2}(\O,\mathbb{C})$, which is defined by $\widetilde{T}\widetilde{f} := \widetilde{u},$ where $\widetilde{u}$ is solution of \eqref{eq:source_discrete}. Observe that $\widetilde{T}$ is compact but not self-adjoint. Hence, we need to introduce the dual solution operator $\widetilde{T}^{*} : L^{2}(\O,\mathbb{C}) \rightarrow L^{2}(\O,\mathbb{C})$, which is defined by $\widetilde{T}^{*}\widetilde{f}^{*} := \widetilde{u}^{*}$, where $\widetilde{u}^{*}$ is solution of \eqref{eq:source_discrete_dual}.  
Moreover, the spectra of $\widetilde{T}$ and $T$ coincide. 
Now, we will establish the convergence of $\widehat{T}_{h}$ to $\widetilde{T}$, and $\widehat{T}_{h}^{*}$ to $\widetilde{T}^{*}$.

\begin{lemma}\label{eq:duality2}
The following results hold:
\begin{enumerate}
\item There exists $s>0$ such that 
\begin{equation*}
\|(\widetilde{T}-\widehat{T}_{h})f\|_{0,\O} \lesssim h^{s}\|f\|_{0,\O} \quad \forall f \in L^{2}(\O,\mathbb{C}),
\end{equation*}
\item There exists $s^{*}>0$ such that
\begin{equation*}
\|(\widetilde{T}^{*}-\widehat{T}_{h}^{*} )f\|_{0,\O} \lesssim h^{s^{*}}\|f\|_{0,\O} \quad \forall f \in L^{2}(\O,\mathbb{C}),
\end{equation*}
where the hidden constants in each estimate depends on $\kappa$ and $\vartheta$, but not on $h$.
\end{enumerate}
\end{lemma}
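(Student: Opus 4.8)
The plan is to reduce the claim to the intermediate $L^2$ estimate already recorded in Lemma~\ref{eq:duality1} and then to absorb its right-hand side into $\|f\|_{0,\O}$. Fix $f \in L^2(\O,\mathbb{C})$ and set $u := \widetilde{T}f = Tf$ and $u_h := \widehat{T}_h f = T_h P_h f$; note that $\widehat{T}_h$ is well defined on $L^2(\O,\mathbb{C})$ precisely because $P_h$ acts on $L^2(\O,\mathbb{C})$. The duality argument behind Lemma~\ref{eq:duality1} never uses that the source lies in $\mathfrak{E}$: it only invokes the auxiliary adjoint problem \eqref{eq:auxiliaryproblem} together with the elliptic regularity $\|w\|_{1+s,\O}\lesssim\|u-u_h\|_{0,\O}$, which hold for an arbitrary right-hand side $u-u_h \in L^2(\O,\mathbb{C})$. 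Hence the same computation applies verbatim to the present $u,u_h$ and gives
\begin{equation*}
\|u-u_h\|_{0,\O}\lesssim h^{s}\Bigl(\|u-u_h\|_{1,\O}+|u-\Pi^{\nabla}u_h|_{1,h}+\|\vartheta(\mathbf{x})\cdot\nabla u_h-\Pi(\vartheta(\mathbf{x})\cdot\nabla u_h)\|_{0,\O}\Bigr).
\end{equation*}
Because an explicit factor $h^{s}$ is already present, it suffices to bound each of the three quantities in parentheses by a constant multiple of $\|f\|_{0,\O}$, with no additional power of $h$.

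The crucial observation is that these three quantities are controlled by the $L^2$-norm of the data alone, so that no $H^1$-regularity of $f$ is ever required. For the first one, the continuous inf-sup condition \eqref{eq:inf-supB} applied to $\CB(u,v)=c(f,v)$ together with $|c(f,v)|\le\|f\|_{0,\O}\|v\|_{0,\O}$ yields $\|u\|_{1,\O}\lesssim\|f\|_{0,\O}$, while the discrete inf-sup condition \eqref{eq:inf-supBhab}, applied to the problem $\CB_h(u_h,v_h)=c_h(P_h f,v_h)$, together with the boundedness of $c_h(\cdot,\cdot)$ and the $L^2$-stability $\|P_h f\|_{0,\O}\le\|f\|_{0,\O}$, gives $\|u_h\|_{1,\O}\lesssim\|P_h f\|_{0,\O}\le\|f\|_{0,\O}$; a triangle inequality then bounds $\|u-u_h\|_{1,\O}$ by $\|f\|_{0,\O}$. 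For the second one I would split $|u-\Pi^{\nabla}u_h|_{1,h}\le|u-\Pi^{\nabla}u|_{1,h}+|\Pi^{\nabla}(u-u_h)|_{1,h}$ and use that $\Pi^{\nabla}$ is the elementwise $H^1$-seminorm projection, hence non-expansive in that seminorm; both summands are then dominated by $|u|_{1,\O}+\|u-u_h\|_{1,\O}\lesssim\|f\|_{0,\O}$. For the third one, the $L^2$-stability of $\Pi$ and the boundedness of $\vartheta$ give $\|\vartheta(\mathbf{x})\cdot\nabla u_h-\Pi(\vartheta(\mathbf{x})\cdot\nabla u_h)\|_{0,\O}\lesssim\|\vartheta\|_{\infty,\O}|u_h|_{1,\O}\lesssim\|f\|_{0,\O}$. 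Collecting the three bounds proves the first estimate.

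The main obstacle, and the only genuinely new point beyond Lemma~\ref{eq:duality1}, is exactly this book-keeping of norms: Lemma~\ref{col:conv_in_norm} only delivers $H^1$-convergence measured against $\|f\|_{1,\O}$, so to obtain an operator bound on $L^2(\O,\mathbb{C})$ one cannot simply insert it. The remedy is to discard the rate in the three auxiliary terms and retain only stability, which is available in the $L^2$-norm thanks to the $L^2$-stability of $P_h$ and the discrete inf-sup \eqref{eq:inf-supBhab}. The dual estimate is obtained by the same scheme: one applies the second part of Lemma~\ref{eq:duality1} to $u^{*}:=\widetilde{T}^{*}f=T^{*}f$ and $u_h^{*}:=\widehat{T}_h^{*}f$, replaces the regularity \eqref{eq:regularity} by its dual counterpart \eqref{eq:regularity_dual}, and bounds the resulting three terms by $\|f\|_{0,\O}$ using the discrete inf-sup condition and the stability of the dual discrete solution operator, which produces the exponent $s^{*}$.
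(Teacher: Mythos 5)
Your proof is correct in substance, but it follows a genuinely different route from the paper's. The paper does not pass through the duality estimate of Lemma~\ref{eq:duality1} at all: it reruns the $H^1$ argument of Lemma~\ref{col:conv_in_norm} (interpolant plus discrete inf-sup), replacing only the estimate of the consistency term $\textbf{(I)}=c_h(P_hf,v_h)-c(f,v_h)$ by one that uses merely $f\in L^2(\O,\mathbb{C})$ (the argument of \cite[Lemma 4.11]{MR4050542}: the orthogonality defining $P_h$ and $\Pi^{E}$ gives $\textbf{(I)}\lesssim h\|P_hf\|_{0,\O}\|v_h\|_{1,\O}$), and invoking $\|Tf\|_{1+s,\O}\lesssim\|f\|_{0,\O}$ throughout. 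That version actually delivers the stronger bound $\|(T-\widehat{T}_{h})f\|_{1,\O}\lesssim h^{s}\|f\|_{0,\O}$, of which the stated $L^2$ estimate is an immediate corollary. Your route --- quote the duality estimate and then downgrade its right-hand side to pure stability bounds in $\|f\|_{0,\O}$ --- is shorter and reuses more of what is already proved, at the price of obtaining only the $L^2$ operator bound, which is all the lemma asks for. Your three stability bounds (continuous and discrete inf-sup with $L^2$-bounded right-hand sides, non-expansiveness of $\Pi^{\nabla}$ in the broken seminorm, $L^2$-orthogonality of $\Pi$) are all correct.

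The one point that needs care is your claim that the proof of Lemma~\ref{eq:duality1} ``never uses that the source lies in $\mathfrak{E}$.'' Its term $\textbf{(III)}=c(f,w_I)-c_h(P_hf,w_I)$ is bounded there by quantities involving $\widehat{u}-\widehat{u}_{h}$ alone, which implicitly trades $f$ for $\lambda\widehat{u}$ and therefore does exploit $f\in\mathfrak{E}$; this is precisely why the paper switches from \cite[Lemma 4.10]{MR4050542} in Lemma~\ref{eq:duality1} to \cite[Lemma 4.11]{MR4050542} in the present lemma. For general $f\in L^2(\O,\mathbb{C})$ you must estimate this term directly: since $w_I\in V_h$, one has $c(f,w_I)=c(P_hf,w_I)$ and hence $\textbf{(III)}=\sum_{E}\bigl[(P_hf-\Pi^{E}P_hf,\,w_I-\Pi^{E}w_I)_{0,E}-S_0^{E}(P_hf-\Pi^{E}P_hf,\,w_I-\Pi^{E}w_I)\bigr]\lesssim h\|f\|_{0,\O}\|w\|_{1,\O}\lesssim h\|f\|_{0,\O}\|\widehat{u}-\widehat{u}_{h}\|_{0,\O}$, which is absorbed into your final bound without changing the conclusion. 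With that single substitution your argument closes.
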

\begin{proof}
The result for the first estimate follows by repeating the arguments on the proof of Lemma \ref{col:conv_in_norm}, but the term \textbf{(I)} is estimated by using the arguments in the proof of \cite[Lemma 4.11]{MR4050542}. Analogous arguments can be used for the second estimate.
\end{proof}
Now, we establish the following estimates for the error of the primal and dual eigenfunctions.
\begin{lemma}\label{eq:duality3}
Let $u_{h}$ be an eigenfunction of $\widehat{T}_{h}$ associated with the eigenvalue $\mu_{h}^{(i)}$, $1\leq i \leq m$, with $\|u_{h}\|_{0,\O} = 1$, and let $u_{h}^{*}$ be an eigenfunction of $\widehat{T}_{h}^{*}$ associated to the eigenvalue $\mu_{h}^{(i)*}$. The following results hold:
\begin{enumerate}
\item There exists an eigenfunction $u \in L^{2}(\O,\mathbb{C})$ of $T$ associated to the eigenvalue $\mu$ and $s>0$ such that 
\begin{multline*}
\|u-u_{h}\|_{0,\O} \lesssim h^{s}\left(\|\widehat{u} - \widehat{u}_{h}\|_{1,\O} + |\widehat{u} - \Pi^{\nabla}\widehat{u}_{h}|_{1,h}\right. \\ 
\left. + \|\vartheta(\mathbf{x})\cdot\nabla \widehat{u}_{h} - \Pi(\vartheta(\mathbf{x})\cdot\nabla \widehat{u}_{h})\|_{0,\O}\right),
\end{multline*}
\item There exists an eigenfunction $u^{*} \in L^{2}(\O,\mathbb{C})$ of $T^{*}$ associated to the eigenvalue $\mu^{*}$ and $s^{*}>0$ such that 
\begin{multline*}
\|u^{*}-u_{h}^{*}\|_{0,\O} \lesssim h^{s}\left(\|\widehat{u}^{*} - \widehat{u}_{h}^{*}\|_{1,\O} + |\widehat{u}^{*} - \Pi^{\nabla}\widehat{u}_{h}^{*}|_{1,h}\right. \\ 
\left. + \|\vartheta(\mathbf{x})\cdot\nabla \widehat{u}_{h}^{*} - \Pi(\vartheta(\mathbf{x})\cdot\nabla \widehat{u}_{h}^{*})\|_{0,\O}\right),
\end{multline*}
\end{enumerate}
where the hidden constants in each estimate depends on $\kappa$ and $\vartheta$, but not on $h$.
\end{lemma}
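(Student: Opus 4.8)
The plan is to feed the classical spectral approximation theory for compact non-selfadjoint operators with the $L^2$-norm convergence of Lemma~\ref{eq:duality2}, so as to attach to the discrete eigenfunction $u_h$ a genuine eigenfunction $u$ of $T$, and then to bound the resulting $L^2$ eigenfunction error by the concrete source-problem error already controlled in Lemma~\ref{eq:duality1}. By Lemma~\ref{eq:duality2} we have $\widehat{T}_h\to\widetilde{T}$ (and $\widehat{T}_h^*\to\widetilde{T}^*$) in the operator norm of $\mathcal{L}(L^2(\O,\mathbb{C}))$; since $\widetilde{T}$ is compact and shares its nonzero spectrum with $T$, this norm convergence is exactly the hypothesis required to invoke \cite{BO}: the cluster $\{\mu_h^{(i)}\}_{i=1}^m$ converges to $\mu$ without spurious modes, and the associated Riesz spectral projectors converge.

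First I would fix a circle $\Gamma$ enclosing $\mu$ and separating it from the remaining spectrum of $\widetilde{T}$; for $h<h_0$ it also isolates the cluster $\{\mu_h^{(i)}\}_{i=1}^m$ from the rest of the discrete spectrum. Writing $E$ and $\widehat{E}_h$ for the spectral projectors of $\widetilde{T}$ and $\widehat{T}_h$ attached to $\Gamma$, the resolvent identity gives
\begin{equation*}
E-\widehat{E}_h=\frac{1}{2\pi i}\oint_\Gamma (z-\widetilde{T})^{-1}(\widetilde{T}-\widehat{T}_h)(z-\widehat{T}_h)^{-1}\,dz .
\end{equation*}
Norm convergence guarantees that $(z-\widehat{T}_h)^{-1}$ is bounded uniformly on $\Gamma$ for $h<h_0$ and that $\widehat{E}_h$ has the same rank $m$ as $E$. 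Applying the identity to the normalized $u_h=\widehat{E}_h u_h$ and absorbing the higher-order term produced by $\|\widetilde{T}-\widehat{T}_h\|\to 0$, one obtains, with $u:=E u_h/\|E u_h\|_{0,\O}\in\mathfrak{E}$ an eigenfunction of $T$, the gap estimate
\begin{equation*}
\|u-u_h\|_{0,\O}\lesssim \|(\widetilde{T}-\widehat{T}_h)|_{\mathfrak{E}}\|_{0,\O}.
\end{equation*}

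The second step identifies the operator norm on the right with the quantity estimated in Lemma~\ref{eq:duality1}. Since $\mathfrak{E}$ is finite dimensional, the supremum defining $\|(\widetilde{T}-\widehat{T}_h)|_{\mathfrak{E}}\|_{0,\O}$ is attained at some unit $f\in\mathfrak{E}$, and for such $f$ one has $\widetilde{T}f=Tf=\widehat{u}$ and $\widehat{T}_h f=\widehat{u}_h$ in the notation of Lemma~\ref{eq:duality1} (here $\widetilde{T}$ and $T$ agree on the smooth eigenspace $\mathfrak{E}$). Hence $\|(\widetilde{T}-\widehat{T}_h)|_{\mathfrak{E}}\|_{0,\O}=\|\widehat{u}-\widehat{u}_h\|_{0,\O}$, and Lemma~\ref{eq:duality1} bounds the latter by $h^s$ times the displayed combination of $H^1$ and projection errors, which proves the primal statement. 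The dual estimate is obtained verbatim, now running the argument for $\widehat{T}_h^*\to\widetilde{T}^*$ on the biorthogonal eigenspace $\mathfrak{E}^*$ and closing with the dual part of Lemma~\ref{eq:duality1}.

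The hard part will be the first step: for a non-selfadjoint problem the eigenfunction error cannot be read off from the eigenvalue error, so the argument must pass through the spectral projectors, and the delicate point is the uniform-in-$h$ control of the resolvents $(z-\widehat{T}_h)^{-1}$ along $\Gamma$ together with the stability of the rank of $\widehat{E}_h$. Both facts rest on the norm convergence of Lemma~\ref{eq:duality2}, and, exactly as in Theorem~\ref{eq:quadord}, their verification must keep track of the primal and dual eigenspaces simultaneously through the biorthogonality $\CB(u_k,u_l^*)=\delta_{k,l}$. Once this spectral stability is secured, the reduction to Lemma~\ref{eq:duality1} is routine.
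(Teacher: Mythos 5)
Your proposal is correct and follows essentially the same route as the paper: both use the $L^{2}$ operator-norm convergence of Lemma~\ref{eq:duality2} to invoke the Babu\v{s}ka--Osborn spectral approximation theory, obtain the gap bound $\|u-u_h\|_{0,\O}\lesssim\sup_{\widetilde f\in\widetilde{\mathfrak{E}},\,\|\widetilde f\|_{0,\O}=1}\|(\widetilde T-\widehat T_h)\widetilde f\|_{0,\O}$, and then close with Lemma~\ref{eq:duality1}. The only difference is that you unfold the resolvent-integral/spectral-projector argument that the paper delegates to the citation of \cite[Theorem 7.1]{BO}, which is a matter of exposition rather than substance.
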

\begin{proof}
Applying Lemma \ref{eq:duality2} and \cite[Theorem 7.1]{BO}, we have spectral convergence of $\widetilde{T}_{h}$ to $\widetilde{T}$. Now, due to the relation between the eigenfunctions of $T$ and $T_{h}$ with those of $\widetilde{T}$ and $\widehat{T}_{h}$, we have $u_{h} \in \mathfrak{E}_{h}$ and there exists $u \in \mathfrak{E}$ such that 
\begin{equation*}
\|u-u_{h}\|_{0,\O} \lesssim \underset{\widetilde{f} \in \widetilde{\mathfrak{E}} : \|\widetilde{f}\|_{0,\O}=1}{\sup} \|(\widetilde{T}-\widehat{T}_{h})\widetilde{f}\|_{0,\O},
\end{equation*}
where $\widetilde{\mathfrak{E}}$ is an eigenspace of $\widetilde{T}$. On the other hand, using Lemma \ref{eq:duality1}, for all $\widetilde{f} \in \widetilde{\mathfrak{E}}$, if $f \in \mathfrak{E}$ is such that $f = \widetilde{f}$, then 
\begin{multline*}
\|(\widetilde{T}-\widehat{T}_{h})\widetilde{f}\|_{0,\O} = \|(T-\widehat{T}_{h})f\|_{0,\O} \lesssim h^{s}\left(\|\widehat{u} - \widehat{u}_{h}\|_{1,\O} + |\widehat{u} - \Pi^{\nabla}\widehat{u}_{h}|_{1,h}\right. \\ 
\left.  + \|\vartheta(\mathbf{x})\cdot\nabla \widehat{u}_{h} - \Pi(\vartheta(\mathbf{x})\cdot\nabla \widehat{u}_{h})\|_{0,\O}\right).
\end{multline*}
This concludes the proof for the first estimate. Similar arguments can be used in order to obtain the second estimate.
\end{proof}
We now present the following results, which establish error estimates for the eigenfunctions in $L^{2}$ norm for the primal and dual eigenfunctions.
\begin{theorem}\label{teoremamejororden}
The following results hold:
\begin{enumerate}
\item There exists $s>0$ such that 
\begin{multline*}
\|u-u_{h}\|_{0,\O} \lesssim h^{s}\left(\|u-u_{h}\|_{1,\O} + |u - \Pi^{\nabla}u_{h}|_{1,h} + |\lambda-\lambda_{h}|\|u_{h}\|_{1,\O}\right. \\
\left. + \|\vartheta(\mathbf{x})\cdot\nabla u_{h} - \Pi(\vartheta(\mathbf{x})\cdot\nabla u_{h})\|_{0,\O}\right),
\end{multline*}
\item There exists $s^{*}>0$ such that 
\begin{multline*}
\|u^{*}-u_{h}^{*}\|_{0,\O} \lesssim h^{s^{*}}\left(\|u^{*}-u_{h}^{*}\|_{1,\O} + |u^{*} - \Pi^{\nabla}u_{h}^{*}|_{1,h} + |\lambda-\lambda_{h}|\|u_{h}^{*}\|_{1,\O}\right. \\
\left. + \|\vartheta(\mathbf{x})\cdot\nabla u_{h}^{*} - \Pi(\vartheta(\mathbf{x})\cdot\nabla u_{h}^{*})\|_{0,\O}\right),
\end{multline*}
\end{enumerate}
where the hidden constants in each estimate depends on $\kappa$ and $\vartheta$, but not on $h$.
\end{theorem}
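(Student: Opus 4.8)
The plan is to obtain Theorem~\ref{teoremamejororden} directly from Lemma~\ref{eq:duality3} by converting the quantities built from the source-problem solutions $\widehat{u}=Tf$ and $\widehat{u}_h=\widehat{T}_hf$ into the corresponding eigenfunction quantities involving $u$ and $u_h$. Since the primal and dual estimates rest on identical manipulations, I would prove only the first assertion and note that the second follows verbatim from part~2 of Lemma~\ref{eq:duality3}. Throughout I take as source the normalized continuous eigenfunction $u$ supplied by Lemma~\ref{eq:duality3} (so $f=u$, $\|u_h\|_{0,\O}=1$), and I use the eigenrelations $\widehat{u}=Tu=\mu u$ and $\widehat{T}_hu_h=\mu_hu_h$.

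The algebraic heart of the argument is the identity $\widehat{u}_h=\widehat{T}_hu=\mu_hu_h+\widehat{T}_h(u-u_h)$, which lets me peel off the eigenfunction part from each hatted term. For the first term this gives
\begin{equation*}
\widehat{u}-\widehat{u}_h=\mu(u-u_h)+(\mu-\mu_h)u_h-\widehat{T}_h(u-u_h),
\end{equation*}
and, using the linearity of $\Pi^{\nabla}$ and of $\Pi$,
\begin{equation*}
\widehat{u}-\Pi^{\nabla}\widehat{u}_h=\mu(u-\Pi^{\nabla}u_h)+(\mu-\mu_h)\Pi^{\nabla}u_h-\Pi^{\nabla}\widehat{T}_h(u-u_h),
\end{equation*}
together with the analogous splitting
\begin{equation*}
\vartheta\cdot\nabla\widehat{u}_h-\Pi(\vartheta\cdot\nabla\widehat{u}_h)=\mu_h\bigl(\vartheta\cdot\nabla u_h-\Pi(\vartheta\cdot\nabla u_h)\bigr)+\bigl(\vartheta\cdot\nabla\widehat{T}_h(u-u_h)-\Pi(\vartheta\cdot\nabla\widehat{T}_h(u-u_h))\bigr)
\end{equation*}
for the convective term.

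The key technical ingredient needed to close the estimate is an $L^{2}$-to-$H^{1}$ stability bound for $\widehat{T}_h$: combining the discrete inf-sup condition \eqref{eq:inf-supBhab} with the continuity of $c_h(\cdot,\cdot)$ and the bound $\|P_hv\|_{0,\O}\le\|v\|_{0,\O}$ yields $\|\widehat{T}_hv\|_{1,\O}\lesssim\|v\|_{0,\O}$. Using this, together with the stability of $\Pi^{\nabla}$ and $\Pi$ and the boundedness of $\vartheta$, every remainder term containing $\widehat{T}_h(u-u_h)$ is controlled by $\|u-u_h\|_{0,\O}$; the factors $\mu-\mu_h$ are turned into $|\lambda-\lambda_h|$ through $\mu=1/\lambda$, $\mu_h=1/\lambda_h$ and the fact that the eigenvalues stay bounded away from zero, producing the term $|\lambda-\lambda_h|\|u_h\|_{1,\O}$, while the bounded factor $\mu_h$ is absorbed into the hidden constant.

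Substituting these three decompositions into Lemma~\ref{eq:duality3} gives
\begin{equation*}
\|u-u_h\|_{0,\O}\lesssim h^{s}\Bigl(\|u-u_h\|_{1,\O}+|u-\Pi^{\nabla}u_h|_{1,h}+|\lambda-\lambda_h|\|u_h\|_{1,\O}+\|\vartheta\cdot\nabla u_h-\Pi(\vartheta\cdot\nabla u_h)\|_{0,\O}+\|u-u_h\|_{0,\O}\Bigr),
\end{equation*}
where the extra $h^{s}\|u-u_h\|_{0,\O}$ on the right is absorbed into the left-hand side for $h<h_0$ small enough, yielding the claimed bound. The main obstacle I anticipate is the careful bookkeeping of the normalizations and the clean identification of the source $f$ with the eigenfunction $u$, together with establishing the $L^{2}$-to-$H^{1}$ stability of $\widehat{T}_h$ from \eqref{eq:inf-supBhab}; once that stability bound is in hand, the remainder estimates and the final absorption step are routine.
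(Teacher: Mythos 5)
Your proposal is correct and follows essentially the same route as the paper: both start from Lemma \ref{eq:duality3} with $f=u$, and your decomposition $\widehat{u}-\widehat{u}_h=\mu(u-u_h)+(\mu-\mu_h)u_h-\widehat{T}_h(u-u_h)$ is algebraically identical to the paper's adding and subtracting of $u_h/\lambda$ and $u_h/\lambda_h$, with your $L^2$-to-$H^1$ stability bound $\|\widehat{T}_h(u-u_h)\|_{1,\O}\lesssim\|u-u_h\|_{0,\O}$ being exactly the paper's well-posedness estimate $\|\widehat{u}_h-u_h/\lambda_h\|_{1,\O}\lesssim\|P_h\|\|u-u_h\|_{0,\O}$ derived from \eqref{eq:inf-supBhab}. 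The only cosmetic difference is that the paper absorbs the residual $\|u-u_h\|_{0,\O}$ into $\|u-u_h\|_{1,\O}$ on the right-hand side rather than absorbing $h^s\|u-u_h\|_{0,\O}$ into the left-hand side for small $h$ as you do; both are valid.
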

\begin{proof}
We focus our proof on the error for the primal eigenfunctions, since similar arguments can be used for the error of dual eigenfunctions. Invoking Lemma \ref{eq:duality3}, we have 
\begin{multline*}
\|u-u_{h}\|_{0,\O} \lesssim h^{s}\left(\|\widehat{u} - \widehat{u}_{h}\|_{1,\O} + |\widehat{u} - \Pi^{\nabla}\widehat{u}_{h}|_{1,h}\right. \\ 
\left. + \|\vartheta(\mathbf{x})\cdot\nabla \widehat{u}_{h} - \Pi(\vartheta(\mathbf{x})\cdot\nabla \widehat{u}_{h})\|_{0,\O}\right).
\end{multline*}
Our task is to estimate each contribution of the right hand side. Let $\widehat{u} \in V$ be the solution of the problem $\CB(\widehat{u},v) = c(u,v)$ for all $v \in V$.

Since $u$ is solution of \eqref{eq:spectral1}, we obtain $\widehat{u} = u/\lambda$. On the other hand, let $\widehat{u}_{h} \in V_{h}$ be the solution of the discrete problem $\CB_{h}(\widehat{u}_{h},v_{h}) = c_{h}(P_{h}u,v_{h})$, for all $v_{h} \in V_{h}$. Recalling that $u_{h}$ solves problem \eqref{eq:spectral_disc}, observe that the problem $\CB_{h}(\widehat{u}_{h} - u_{h}/\lambda_{h},v_{h}) = c_{h}(P_{h}u - u_{h},v_{h})$ is well-posed for all $\bv_h\in V_h$ and its solution satisfies
\begin{equation*}
\|\widehat{u}_{h} - u_{h}/\lambda_{h}\|_{1,\O} \lesssim \|P_{h}\|\|u-u_{h}\|_{0,\O} \lesssim \|u-u_{h}\|_{0,\O}.
\end{equation*}
On the other hand, adding and substracting $u_{h}/\lambda$ and $u_{h}/\lambda_{h}$, and using triangle inequality we obtain
\begin{multline}\label{111}
\|\widehat{u}-\widehat{u}_{h}\|_{1,\O} \lesssim \dfrac{\|u-u_{h}\|_{1,\O}}{\lambda} + \dfrac{|\lambda - \lambda_{h}|}{|\lambda\lambda_{h}|}\|u_{h}\|_{1,\O} + \|u-u_{h}\|_{0,\O} \\
\lesssim \|u-u_{h}\|_{1,\O} + |\lambda-\lambda_{h}|\|u_{h}\|_{1,\O}.
\end{multline}
Now, we need to control $|\widehat{u} - \Pi^{\nabla}\widehat{u}_{h}|_{1,h}$. To do this task, adding and substracting $u_{h}/\lambda_{h}$ and $\widehat{u}_{h}$, and using triangle inequality we have
\begin{multline}\label{222}
|\widehat{u}-\Pi^{\nabla}\widehat{u}_{h}|_{1,h} \leq  \|\widehat{u}-\widehat{u}_{h}\|_{1,\O} + |\widehat{u}_{h} - u_{h}/\lambda_{h}|_{1,\O} + \dfrac{|u_{h} - \Pi^{\nabla}u_{h}|_{1,h}}{\lambda_{h}} \\
\lesssim \|u-u_{h}\|_{1,\O} + |\lambda - \lambda_{h}|\|u_{h}\|_{1,\O} + |u - \Pi^{\nabla}u_{h}|_{1,h}.
\end{multline}
Finally, proceeding analogously to the previous estimate, we obtain
\begin{multline}\label{333}
\|\vartheta(\mathbf{x})\cdot\nabla \widehat{u}_{h} - \Pi(\vartheta(\mathbf{x})\cdot\nabla \widehat{u}_{h})\|_{0,\O} \lesssim \|\widehat{u}_{h} - u_{h}/\lambda_{h}\|_{1,\O} \\ 
+ \dfrac{\|\vartheta(\mathbf{x})\cdot\nabla u_{h} - \Pi(\vartheta(\mathbf{x})\cdot\nabla u_{h})\|_{0,\O}}{\lambda_{h}} + \|\Pi(\vartheta(\mathbf{x})\cdot\nabla u_{h}/\lambda_{h} - \vartheta(\mathbf{x})\cdot\nabla \widehat{u}_{h})\|_{0,\O} \\
\lesssim \|u-u_{h}\|_{1,\O} + \|\vartheta(\mathbf{x})\cdot\nabla u_{h} - \Pi(\vartheta(\mathbf{x})\cdot\nabla u_{h})\|_{0,\O}.
\end{multline}
Hence, combining \eqref{111}, \eqref{222}, \eqref{333}, and invoking Lemma \ref{eq:duality3} for the primal operators, we conclude the proof.
\end{proof}
\section{A posteriori error analysis}
\label{sec:a_post}
The aim of this section is to introduce a suitable
residual-based error estimator for the convection-diffusion
eigenvalue problem, which results to be  fully computable,
in the sense that  it depends only on quantities available from the VEM solution. 
Let us introduce the following definitions and notations. For any polygon $E\in \CT_h$, we denote by $\CE_{E}$ the set of edges of $E$ and  $\CE_h:=\cup_{E\in\CT_h}\CE_{E}$.
We decompose $\CE_h=\CE_{\O}\cup\CE_{\partial\O}$,
where  $\CE_{\partial\O}:=\{\ell\in \CE_h:\ell\subset \partial\O\}$
and $\CE_{\O}:=\CE\backslash\CE_{\partial \O}$.
For each inner edge $\ell\in \CE_{\O}$ and for any  sufficiently smooth  function
$v$, we define the jump of its normal derivative on $\ell$ by $\left[\!\!\left[ \partial v/\partial \boldsymbol{n}\right]\!\!\right]_\ell:=\nabla (v|_{E})  \cdot \boldsymbol{n}_{E}+\nabla ( v|_{E'}) \cdot \boldsymbol{n}_{E'}$,
where $E$ and $E'$ are  the two elements in $\CT_{h}$  sharing the
edge $\ell$, and $\boldsymbol{n}_{E}$ and $\boldsymbol{n}_{E'}$ are the respective outer unit normal vectors. As a consequence of the mesh regularity assumptions,
we have that each polygon $E\in\CT_h$ admits a sub-triangulation $\CT_h^{E}$
obtained by joining each vertex of $E$ with the midpoint of the ball with respect
to which $\E$ is starred. Let $\CT_h:=\bigcup_{E\in\CT_h}\CT_h^{E}$.
Since we are also assuming \textbf{A2}, $\big\{\CT_h\big\}_{0 < h \leq 1}$
is a shape-regular family of triangulations of $\O$. 
Finally, let us remark that through all this analysis, we will focus only on eigenvalues with simple multiplicity and hence, on their respective associated eigenfunctions.

In order to obtain the error equations for the primal and dual problems, let us define the errors $\texttt{e}_{h} := u - u_{h}$, $\texttt{e}_{h}^{*} := u^{*} - u_{h}^{*}$. We have the following result.
\begin{lemma}\label{eq:errors}
Let $v \in V$. For the primal eigenvalue problem, the following identity holds true
\begin{multline*}
\CB(\texttt{e}_{h},v) = \lambda c(u,v) - \lambda_{h}c(u_h,v) \\
- \displaystyle{\sum_{E \in \mathcal{T}_{h}} \left[a^{E}(u_{h} - \PiK u_h,v) + b^{E}(u_{h} - \PiK u_h,v) - \lambda_{h}c^{E}(u_{h} - \Pi^{E}u_h,v)\right]} \\
+ \displaystyle{\sum_{E \in \mathcal{T}_{h}} \int_{E} \left[\nabla\cdot(\kappa(\mathbf{x})\nabla\PiK u_h) - (\vartheta(\mathbf{x})\cdot \nabla\PiK u_h) + \lambda_{h}\Pi^{E}u_{h}\right]v} \\
+ \displaystyle{\sum_{E \in \mathcal{T}_{h}} \sum_{\ell \in \mathcal{E}_{E}} \int_{\ell} \left(\dfrac{1}{2}\left[\!\!\left[ \dfrac{\partial (\kappa(\mathbf{x})\nabla\PiK u_h)}{\partial{ \boldsymbol{n}}}\right]\!\!\right]_\ell\right) v}.
\end{multline*}
Moreover, for the dual eigenvalue problem, for $v\in V^*$ we have
\begin{multline*}
\CB(\texttt{e}_{h}^{*},v) = \lambda c(\overline{u^{*}},v) - \lambda_{h}c(\overline{u_{h}^{*}},v) \\
 - \displaystyle{\sum_{E \in \mathcal{T}_{h}} \left[a^{E}(\overline{u_{h}^{*}} - \overline{\PiK u_{h}^{*}},v) - b^{E}(\overline{u_{h}^{*}} - \overline{\PiK u_{h}^{*}},v) - \lambda_{h}c^{E}(\overline{u_{h}^{*}} - \overline{\Pi^{E}u_{h}^{*}},v)\right]} \\
+ \displaystyle{\sum_{E \in \mathcal{T}_{h}} \int_{E} \left[\nabla\cdot(\kappa(\mathbf{x})\nabla\overline{\PiK u_{h}^{*}}) + (\vartheta(\mathbf{x})\cdot \nabla\overline{\PiK u_{h}^{*}}) + \lambda_{h}\overline{\Pi^{E}u_{h}^{*}}\right]v} \\
+ \displaystyle{\sum_{E \in \mathcal{T}_{h}} \sum_{\ell \in \mathcal{E}_{E}} \int_{\ell} \left(\dfrac{1}{2}\left[\!\!\left[ \dfrac{\partial (\kappa(\mathbf{x})\nabla\overline{\PiK u_{h}^{*}})}{\partial{ \boldsymbol{n}}}\right]\!\!\right]_\ell\right)v}.
\end{multline*}
\end{lemma}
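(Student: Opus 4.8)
The plan is to derive the residual error equation by starting from the continuous sesquilinear form applied to the error and systematically introducing the discrete quantities through integration by parts elementwise. First I would write $\CB(\texttt{e}_h,v)=\CB(u,v)-\CB(u_h,v)$ and use that $u$ solves the continuous eigenvalue problem \eqref{eq:spectral1}, so that $\CB(u,v)=\lambda c(u,v)$. This accounts for the first term $\lambda c(u,v)$ on the right-hand side. For the remaining piece $-\CB(u_h,v)$, the idea is to add and subtract $\lambda_h c(u_h,v)$, producing the second term $-\lambda_h c(u_h,v)$, together with a remainder $\lambda_h c(u_h,v)-\CB(u_h,v)$ that must be reorganized into the stated consistency and residual contributions.

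Next I would insert the projections $\PiK u_h$ and $\Pi^E u_h$ into the remainder term, splitting each of the forms $a^E(u_h,v)$, $b^E(u_h,v)$ and $c^E(u_h,v)$ by adding and subtracting the corresponding projection. The non-computable parts $a^E(u_h-\PiK u_h,v)$, $b^E(u_h-\PiK u_h,v)$ and $\lambda_h c^E(u_h-\Pi^E u_h,v)$ are collected into the second summation over elements exactly as written in the statement. The computable parts involving the projections are then treated elementwise by integration by parts: for the diffusion term $a^E(\PiK u_h,v)=\int_E \kappa(\mathbf{x})\nabla\PiK u_h\cdot\nabla\bar v$ I would integrate by parts to obtain a volumetric term $-\int_E \nabla\cdot(\kappa(\mathbf{x})\nabla\PiK u_h)\,\bar v$ plus boundary contributions $\int_{\partial E}\kappa(\mathbf{x})(\nabla\PiK u_h\cdot\boldsymbol{n}_E)\,\bar v$. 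Summing the boundary contributions over all elements, the edge integrals on interior edges combine to produce the jump of the normal derivative, while on $v\in V=H^1_0(\O)$ the boundary edges vanish; the factor $1/2$ appears from counting each interior edge once in the double sum $\sum_E\sum_{\ell\in\CE_E}$.

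After integration by parts, the convection term $b^E(\PiK u_h,\Pi^E v)$ and the mass term $\lambda_h c^E(\Pi^E u_h,v)$ contribute directly to the volumetric integrand, giving the expression $\nabla\cdot(\kappa(\mathbf{x})\nabla\PiK u_h)-(\vartheta(\mathbf{x})\cdot\nabla\PiK u_h)+\lambda_h\Pi^E u_h$ inside the third summation. Collecting the volumetric residual, the jump residual, and the two projection-defect summations then yields the stated identity. For the dual problem the same strategy applies, but starting from the dual equation \eqref{eq:dual_problem} and the identity $\CB(v,u^*)=a(u^*,v)-b(u^*,v)$, which flips the sign of the convection contribution and introduces the conjugates $\overline{u_h^*}$, $\overline{\PiK u_h^*}$, $\overline{\Pi^E u_h^*}$; I would work with $\CB(\texttt{e}_h^*,v)$ using the adjoint structure and repeat the elementwise integration by parts verbatim.

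The step I expect to be the main obstacle is the careful bookkeeping of the projection operators: one must verify exactly which projection ($\PiK$ versus $\Pi^E$) acts in each form so that the computable volumetric and edge residuals are built from $\PiK u_h$ (for the differential operators) and $\Pi^E u_h$ (for the zeroth-order mass term), and that the non-computable defects $u_h-\PiK u_h$ and $u_h-\Pi^E u_h$ are isolated cleanly. A secondary subtlety is ensuring the $1/2$ factor on the jump is correct, which requires accounting for the orientation of the normals $\boldsymbol{n}_E$, $\boldsymbol{n}_{E'}$ on shared interior edges and confirming that boundary edges contribute nothing because $v$ has vanishing trace on $\partial\O$.
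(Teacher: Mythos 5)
Your proposal follows essentially the same route as the paper: write $\CB(\texttt{e}_h,v)=\lambda c(u,v)-\CB(u_h,v)$, add and subtract $\lambda_h c(u_h,v)$ and the projections $\PiK u_h$, $\Pi^{E}u_h$ to isolate the non-computable defects, then integrate the computable diffusion part by parts elementwise to produce the volumetric residual and the edge jumps (with the $1/2$ compensating for each interior edge being visited twice in $\sum_{E}\sum_{\ell\in\CE_E}$), and repeat with the sign flip on $b$ for the dual problem. The only slip is writing the convection contribution as $b^{E}(\PiK u_h,\Pi^{E}v)$ rather than $b^{E}(\PiK u_h,v)$ --- the error equation involves the continuous forms, not the discrete ones --- but this does not affect the argument.
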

\begin{proof}
We only prove the estimate for the primal problem since similar arguments are used for the dual error equation. Let  $(\lambda, u)\in\mathbb{C}\times V$ be an eigenpair that solves \eqref{eq:spectral1}. Therefore, using integration by parts and adding and subtracting the projection $\PiK$, we have
\begin{multline*}
\CB(\texttt{e}_{h},v) = \CB(u,v) - \CB(u_h,v) = \lambda c(u,v) - \displaystyle{\sum_{E \in \mathcal{T}_{h}} \left[a^{E}(u_h,v) + b^{E}(u_h,v)\right]} \\
= \lambda c(u,v) - \lambda_{h}c(u_{h},v) - \displaystyle{\sum_{E \in \mathcal{T}_{h}} \left[a^{E}(\PiK u_h,v) + b^{E}(\PiK u_h,v) - \lambda_{h}c^{E}(\Pi^{E}u_h,v)\right]} \\
- \displaystyle{\sum_{E \in \mathcal{T}_{h}} \left[a^{E}(u_h - \PiK u_h,v) + b^{E}(u_h - \PiK u_h,v) - \lambda_{h}c^{E}(u_h - \Pi^{E}u_h,v)\right]} \\
= \lambda c(u,v) - \lambda_{h}c(u_{h},v) - \displaystyle{\sum_{E \in \mathcal{T}_{h}} \left[a^{E}(u_h - \PiK u_h,v) + b^{E}(u_h - \PiK u_h,v)\right.} \\
\left. - \lambda_{h}c^{E}(u_h - \Pi^{E}u_h,v)\right] + \displaystyle{\sum_{E \in \mathcal{T}_{h}} \int_{E}  \left\lbrace\left[\nabla\cdot(\kappa(\mathbf{x})\nabla\PiK u_h) - (\vartheta(\mathbf{x})\cdot\nabla\PiK u_h)\right.\right.} \\
\left.\left.+ \lambda_{h}\Pi^{E}u_h\right]v\right\rbrace + \displaystyle{\sum_{E\in\mathcal{T}_{h}} \sum_{\ell \in \mathcal{E}_{E}}\int_{\ell} \left(\dfrac{1}{2}\left[\!\!\left[ \dfrac{\partial (\kappa(\mathbf{x})\nabla\PiK u_h)}{\partial{ \boldsymbol{n}}}\right]\!\!\right]_\ell\right)v}.
\end{multline*}
This concludes the proof.
\end{proof}
\begin{remark}
Observe that $\nabla\cdot(\kappa(\mathbf{x})\nabla\PiK u_h) = 0$ on each polygon $E \in \mathcal{T}_{E}$, so we will omit this term from now on.
\end{remark}

In order to define our a posteriori error estimator, we introduce the local terms $\Theta_{E}$, $\Theta_{E}^{*}$, $R_{E}$, $R_{E}^{*}$ and the local error indicators $\eta_{E}$ and $\eta_{E}^{*}$ as follows:
\begin{align*}
\Theta_{E}^{2} &:= a_{h}^{E}(u_h - \PiK u_h, u_h - \PiK u_h), & \quad \Theta_{E}^{*2} &:= a_{h}^{E}(\overline{u_{h}^{*}} - \overline{\PiK u_{h}^{*}}, \overline{u_{h}^{*}} - \overline{\PiK u_{h}^{*}}), \\
R_{E}^{2} &:= h_{E}^{2}\|\Upsilon_{E}\|_{0,E}^{2}, & \quad R_{E}^{*2} &:= h_{E}^{2}\|\Upsilon_{E}^{*}\|_{0,E}^{2}, \\
\eta_{E}^{2} &:= \Theta_{E}^{2} + R_{E}^{2} + \displaystyle{\sum_{\ell \in \mathcal{E}_{E}} h_{E}\|J_{\ell}\|_{0,\ell}^{2}}, & \quad \eta_{E}^{*2} &:= \Theta_{E}^{*2} + R_{E}^{*2} + \displaystyle{\sum_{\ell \in \mathcal{E}_{E}} h_{E}\|J_{\ell}^{*}\|_{0,\ell}^{2}},
\end{align*}
where $\Upsilon_{E} := (-(\vartheta(\mathbf{x})\cdot\nabla\PiK u_h) + \lambda_{h}\Pi^{E}u_h)\lvert_{E}$ and $\Upsilon_{E}^{*} := ((\vartheta(\mathbf{x})\cdot\nabla\overline{\PiK u_{h}^{*}}) + \lambda_{h}\overline{\Pi^{E}u_{h}^{*}})\lvert_{E}$ and  the edge residuals are defined  by
\begin{equation*}
J_{\ell} := \left\{\begin{array}{lc}
\dfrac{1}{2}\left[\!\!\left[ \dfrac{\partial (\kappa(\mathbf{x})\nabla\PiK u_h)}{\partial{ \boldsymbol{n}}}\right]\!\!\right]_\ell \quad &\forall \ell \in \mathcal{E}_{\O}, \\
0 \quad &\forall \ell \in \mathcal{E}_{\partial\O},
\end{array}\right.
\end{equation*}
\begin{equation*}
J_{\ell}^{*} := \left\{\begin{array}{lc}
\dfrac{1}{2}\left[\!\!\left[ \dfrac{\partial (\kappa(\mathbf{x})\nabla\overline{\PiK u_{h}^{*}})}{\partial{ \boldsymbol{n}}}\right]\!\!\right]_\ell \quad &\forall \ell \in \mathcal{E}_{\O}, \\
0 \quad &\forall \ell \in \mathcal{E}_{\partial\O},
\end{array}\right.
\end{equation*}
which are clearly computable. With these ingredients at hand, we define the global error estimators $\eta$ and $\eta^{*}$ by
\begin{equation}\label{eq:errorestimator}
\eta := \left(\displaystyle{\sum_{E \in \mathcal{T}_{h}} \eta_{E}^{2}}\right)^{1/2} \quad\text{and}\quad  \eta^{*} := \left(\displaystyle{\sum_{E \in \mathcal{T}_{h}} \eta_{E}^{*2}}\right)^{1/2}.
\end{equation}

\subsection{Reliability}
Our task is to prove that the global error estimators are reliable and efficient. We begin with the reliability analysis proving the following estimate for the error of the eigenfunctions.
\begin{lemma}\label{lema45}
For the primal and dual problems, the following error estimates hold
\begin{align*}
\|u - u_h\|_{1,\O} &\lesssim \eta + |\lambda - \lambda_{h}| + \|u - u_{h}\|_{0,\O}, \\
\|u^{*} - u_{h}^{*}\|_{1,\O} &\lesssim \eta^{*} + |\lambda - \lambda_{h}| + \|u^{*} - u_{h}^{*}\|_{0,\O},
\end{align*}
where the hidden constants depends on $\kappa, \vartheta$ but not on $h$.
\end{lemma}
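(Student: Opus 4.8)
The plan is to exploit the inf-sup condition \eqref{eq:inf-supB} for the continuous sesquilinear form $\CB(\cdot,\cdot)$ together with the error equation from Lemma \ref{eq:errors}, and to control the resulting residual terms by the local indicators using standard bubble-function (or Cl\'ement/Scott--Zhang type) interpolation estimates. I will present only the primal estimate, the dual one being entirely analogous. First I would start from \eqref{eq:inf-supB}: there exists $v\in V$ with $\|v\|_{1,\O}=1$ such that $\beta\|\texttt{e}_h\|_{1,\O}\lesssim \CB(\texttt{e}_h,v)$. The idea is to pick a quasi-interpolant $v_I\in V_h$ of $v$ (for instance the Cl\'ement-type interpolant satisfying the usual local estimates $\|v-v_I\|_{0,E}\lesssim h_E|v|_{1,\omega_E}$ and $\|v-v_I\|_{0,\ell}\lesssim h_\ell^{1/2}|v|_{1,\omega_\ell}$ on edges), and to insert the Galerkin-type orthogonality coming from the discrete problem \eqref{eq:spectral_disc}, so that the error equation can be tested against $v-v_I$ rather than against the whole $v$. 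This is what produces the $h_E$ and $h_E^{1/2}$ weights attached to the volumetric and edge residuals in the definitions of $R_E$ and $J_\ell$.

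Next I would substitute $v-v_I$ into the identity of Lemma \ref{eq:errors}. The right-hand side then splits into four families of terms: the term $\lambda c(u,v-v_I)-\lambda_h c(u_h,v-v_I)$, the projection-defect terms involving $u_h-\PiK u_h$ and $u_h-\Pi^E u_h$, the element residual $\int_E \Upsilon_E\,(v-v_I)$, and the edge residual $\int_\ell J_\ell\,(v-v_I)$. The element and edge residuals are handled directly by Cauchy--Schwarz and the interpolation estimates, yielding $\sum_E R_E\,|v|_{1,\omega_E}$ and $\sum_\ell h_E^{1/2}\|J_\ell\|_{0,\ell}|v|_{1,\omega_\ell}$ respectively, which after a discrete Cauchy--Schwarz sum over the finite-overlap patches are bounded by $\eta\,\|v\|_{1,\O}=\eta$. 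The projection-defect terms are controlled by $\Theta_E$ using the stability bound \eqref{eq:staba} and the continuity of $a_h^E,b_h^E$, again contributing to $\eta$. The first term I would rewrite as $\lambda c(u,v-v_I)-\lambda_h c(u_h,v-v_I)=(\lambda-\lambda_h)c(u,v-v_I)+\lambda_h\,c(u-u_h,v-v_I)$, so that the eigenvalue discrepancy feeds the $|\lambda-\lambda_h|$ contribution and the remaining piece, bounded via $\|u-u_h\|_{0,\O}\|v-v_I\|_{0,\O}\lesssim \|u-u_h\|_{0,\O}\,\|v\|_{1,\O}$, feeds the $\|u-u_h\|_{0,\O}$ contribution.

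Collecting all contributions and using $\|v\|_{1,\O}=1$ then gives $\beta\|u-u_h\|_{1,\O}\lesssim \eta+|\lambda-\lambda_h|+\|u-u_h\|_{0,\O}$, which is the claimed bound; the dual estimate follows by repeating the argument with the dual error equation, the dual inf-sup condition, and the dual indicators $\Theta_E^*,R_E^*,J_\ell^*$. The main obstacle I anticipate is the careful bookkeeping of the consistency/projection-defect terms: one must verify that adding and subtracting the projections $\PiK$ and $\Pi^E$, together with the stabilization bounds \eqref{eq:staba}, genuinely reproduces the indicator $\Theta_E$ rather than some uncontrolled quantity, and that the convective term $b^E$ does not spoil the $H^1$-coercivity budget (here the divergence-free assumption on $\vartheta$ and the continuity constant from $M_1$ are what keep that term subordinate to the inf-sup lower bound). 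A secondary technical point is ensuring the interpolation estimates for $v-v_I$ hold uniformly on the shape-regular sub-triangulation $\CT_h^E$ guaranteed by assumptions \textbf{A1}--\textbf{A2}, so that the finite-overlap summation over patches is legitimate.
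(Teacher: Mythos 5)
Your plan is correct and follows essentially the same route as the paper's proof: test the error identity of Lemma \ref{eq:errors} with $v$ and its virtual interpolant $v_I$, use the discrete problem \eqref{eq:spectral_disc} to shift the residuals onto $v-v_I$, bound the volumetric and edge residuals via interpolation and trace estimates, the projection defects via \eqref{eq:staba}, the eigenvalue/eigenfunction terms by $|\lambda-\lambda_h|$ and $\|u-u_h\|_{0,\O}$, and close with the inf-sup condition \eqref{eq:inf-supB}. The only part you leave implicit --- and correctly flag as the main obstacle --- is the VEM consistency bookkeeping: since $b_h^E$ and $c_h^E$ act through $\Pi^{E}v_I$ rather than $v_I$, the residuals are additionally tested against $v_I-\Pi^{E}v_I$ and extra stabilization terms $S^{E}$, $S_0^{E}$ appear (the terms $T_4=T_5+T_6$ in the paper), all of which are absorbed into $\eta$ exactly as you anticipate.
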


\begin{proof}
Given $v \in V$ and denoting by $v_{I} \in V_{h}$ its interpolant, we have the following identity
\begin{multline*}
\CB(\texttt{e}_{h},v) = \underbrace{\lambda c(u,v) - \lambda_{h}c(u_h,v) + \displaystyle{\sum_{E \in \mathcal{T}_{h}} \lambda_{h}c^{E}(u_{h} - \Pi^{E}u_{h}, v)}}_{T_{1}} \\ 
+ \underbrace{\displaystyle{\sum_{E \in \mathcal{T}_{h}} \CB(u_{h} - \PiK u_{h}, v)}}_{T_{2}} + \underbrace{\displaystyle{\sum_{E \in \mathcal{T}_{h}} \left\lbrace -\CB(\PiK u_{h}, v - v_{I}) + \lambda_{h}c^{E}(\Pi^{E}u_{h},v - v_{I}) \right\rbrace}}_{T_{3}} \\
+ \underbrace{\displaystyle{\sum_{E \in \mathcal{T}_{h}} \left\lbrace \lambda_{h}c^{E}(\Pi^{E}u_{h},v_{I}) - \CB(\PiK u_h, v_{I}) \right\rbrace}}_{T_{4}},
\end{multline*}
where our task is to estimate each of the contributions on the right hand side. For $T_{1}$, using the triangle  and Cauchy-Schwarz inequalities we obtain
\begin{multline*}\label{eq:T1}
T_{1} = \lambda c(u,v) - \lambda_{h}c(u_h,v) + \displaystyle{\sum_{E \in \mathcal{T}_{h}} \lambda_{h}c^{E}(u_{h} - \Pi^{E}u_{h}, v)} \\
= (\lambda - \lambda_{h})c(u,v) + \lambda_{h}c(u - u_{h},v) + \displaystyle{\sum_{E \in \mathcal{T}_{h}} \lambda_{h}c^{E}(u_{h} - \Pi^{E}u_{h}, v)} \\
\leq \|v\|_{1,\O}\left(|\lambda - \lambda_{h}|\|u\|_{0,\O} + |\lambda_{h}|\|u - u_{h}\|_{0,\O}\right. \\ 
\left.+ |\lambda_{h}|\left(\displaystyle{\sum_{E \in \mathcal{T}_{h}} c^{E}(u_{h} - \Pi^{E}u_{h}, u_{h} - \Pi^{E}u_{h})}\right)^{1/2}\right),
\end{multline*}
where in the last inequality we  have used the obvious inequality $\|v\|_{0,\O} \leq \|v\|_{1,\O}$. For $T_{2}$, applying once again the Cauchy-Schwarz inequality we have
\begin{multline}\label{eq:T2}
T_{2} = \displaystyle{\sum_{E \in \mathcal{T}_{h}} \CB(u_{h} - \PiK u_{h}, v)} = \displaystyle{\sum_{E \in \mathcal{T}_{h}} a^{E}(u_{h} - \PiK u_{h}, v) + b^{E}(u_{h} - \PiK u_{h}, v)} \\
\leq C_{\kappa, \vartheta}\displaystyle{\sum_{E \in \mathcal{T}_{h}} |u_{h} - \PiK u_{h}|_{1,E}(|v|_{1,E} + \|v\|_{0,E})} \\
\leq C_{\kappa,\vartheta}\left(\displaystyle{\sum_{E \in \mathcal{T}_{h}} a_{h}^{E}(u_h - \PiK u_h, u_h - \PiK u_h)}\right)^{1/2}\|v\|_{1,\O},
\end{multline}
where in the last inequality we have used the stability of $a^{E}(\cdot,\cdot)$. For $T_{3}$, using integration by parts and the Cauchy-Schwarz inequality, we have
\begin{multline}\label{eq:T3}
T_{3} = \displaystyle{\sum_{E \in \mathcal{T}_{h}} \left\lbrace -\CB(\PiK u_{h}, v - v_{I}) + \lambda_{h}c^{E}(\Pi^{E}u_{h},v - v_{I}) \right\rbrace} \\
= \displaystyle{\sum_{E \in \mathcal{T}_{h}} \int_{E} \left[-(\vartheta(\mathbf{x})\cdot\nabla\PiK u_h) + \lambda_{h}\Pi^{E}u_h\right](v - v_{I})} + \displaystyle{\sum_{E\in\mathcal{T}_{h}} \sum_{\ell \in \mathcal{E}_{E}} \int_{\ell} J_{\ell}(v - v_{I})} \\
\leq \displaystyle{\sum_{E \in \mathcal{T}_{h}} \|\Upsilon_{E}\|_{0,E}\|v - v_{I}\|_{0,E}} + \displaystyle{\sum_{E \in\mathcal{T}_{h}} \sum_{\ell \in \mathcal{E}_{E}} \|J_{\ell}\|_{0,\ell}\|v-v_{I}\|_{0,\ell}} \\
\lesssim \left(\left(\displaystyle{\sum_{E \in \mathcal{T}_{h}} h_{E}^{2}\|\Upsilon_{E}\|_{0,E}^{2}}\right)^{1/2} + \left(\displaystyle{\sum_{E \in \mathcal{T}_{h}} \sum_{\ell \in \mathcal{E}_{E}} h_{E}\|J_{\ell}\|_{0,\ell}^{2}}\right)^{1/2}\right)\|v\|_{1,\O},
\end{multline}
where in the last inequality we have used a trace inequality and Lemma \ref{eq:interpolant}. Finally, for $T_{4}$ we have
\begin{multline}\label{eq:T4}
T_{4} = \displaystyle{\sum_{E \in \mathcal{T}_{h}} \left\lbrace \lambda_{h}\left(c^{E}(\Pi^{E}u_{h},v_{I}) - c_{h}^{E}(u_{h},v_{I})\right) + \CB_{h}(u_h, v_{I}) - \CB(\PiK u_h, v_{I}) \right\rbrace} \\
= T_{5} + T_{6},
\end{multline}
where the terms $T_5$ and $T_6$ are defined by 
\begin{equation*}
\begin{split}
T_{5} &:= \lambda_{h}\displaystyle{\sum_{E \in \mathcal{T}_{h}} S_{0}^{E}(u_{h} - \Pi^{E}u_{h}, v_{I} - \Pi^{E}v_{I})}, \\
T_{6} &:= \displaystyle{\sum_{E \in \mathcal{T}_{h}} \lambda_{h}c^{E}(\Pi^{E}u_{h}, v_{I}-\Pi^{E}v_{I}) + \CB_{h}(u_{h},v_{I}) - \CB(\PiK u_h, v_{I})},
\end{split}
\end{equation*}
which must be correctly estimated. To accomplish this task, we begin with  $T_{5}$, where using the  Cauchy-Schwarz inequality and \eqref{eq:stabc} we obtain
\begin{multline}\label{eq:T5}
T_{5} \leq |\lambda_{h}|\displaystyle{\sum_{E \in \mathcal{T}_{h}} S_{0}^{E}(u_{h} - \Pi^{E}u_{h}, u_{h} - \Pi^{E}u_{h})^{1/2}S_{0}^{E}(v_{I} - \Pi^{E}v_{I}, v_{I} - \Pi^{E}v_{I})^{1/2}} \\
\leq |\lambda_{h}|\left(\displaystyle{\sum_{E \in \mathcal{T}_{h}} c^{E}(u_{h} - \Pi^{E}u_{h}, u_{h} - \Pi^{E}u_{h})}\right)^{1/2}\|v\|_{0,\O},
\end{multline}
where  last inequality have been obtained as consequence of the  stability of the interpolant $v_I$. Now for the term $T_{6}$, using integration by parts, Cauchy-Schwarz inequality, \eqref{eq:staba}, \eqref{eq:errorprojec}, and Lemma \ref{eq:interpolant}, we obtain
\begin{multline}\label{eq:T6}
T_{6} = \displaystyle{\sum_{E \in \mathcal{T}_{h}} \lambda_{h}c^{E}(\Pi^{E}u_{h}, v_{I}-\Pi^{E}v_{I}) + \CB_{h}(u_{h},v_{I}) - \CB(\PiK u_h, v_{I})} \\
= \displaystyle{\sum_{E \in \mathcal{T}_{h}} \left\lbrace -a^{E}(\PiK u_{h}, v_{I} - \Pi^{E} v_{I}) - b^{E}(\PiK u_{h}, v_{I} - \Pi^{E}v_{I})\right.} 
\\ \left.+ c^{E}(\lambda_{h}\Pi^{E}u_{h}, v_{I} - \Pi^{E}v_{I})\right\rbrace + \displaystyle{\sum_{E\in\mathcal{T}_{h}} S^{E}(u_{h} - \PiK u_{h}, v_{I} - \PiK v_{I})} \\
= \displaystyle{\sum_{E\in\mathcal{T}_{h}} \int_{E} \left[-(\vartheta(\mathbf{x})\cdot\nabla\PiK u_h) + \lambda_{h}\Pi^{E}u_h\right](v_{I} - \Pi^{E}v_{I})} \\
+ \displaystyle{\sum_{E\in\mathcal{T}_{h}}\sum_{\ell \in \mathcal{E}_{E}} \int_{\ell}J_{\ell}(v_{I} - \Pi^{E}v_{I})} + \displaystyle{\sum_{E\in\mathcal{T}_{h}} S^{E}(u_{h} - \PiK u_{h}, v_{I} - \PiK v_{I})} \\
\lesssim \displaystyle{\sum_{E\in\mathcal{T}_{h}} \|\Upsilon_{E}\|_{0,E}\|v_{I}-\Pi^{E}v_{I}\|_{0,E}} + \displaystyle{\sum_{E\in\mathcal{T}_{h}} \sum_{\ell\in\mathcal{E}_{E}} \|J_{\ell}\|_{0,\ell}\|v_{I}-\Pi^{E}v_{I}\|_{0,E}} \\
 + \left(\displaystyle{\sum_{E\in\mathcal{T}_{h}} a_{h}^{E}(u_{h} - \PiK u_{h}, u_{I} - \PiK u_{h})}\right)^{1/2}\|v\|_{1,\O}, \\
\lesssim \left(\displaystyle{\sum_{E\in\mathcal{T}_{h}} h_{E}^{2}\|\Upsilon_{E}\|_{0,E}^{2}}\|\right)^{1/2}\|v\|_{1,\O} + \left(\displaystyle{\sum_{E\in\mathcal{T}_{h}} \sum_{\ell\in\mathcal{E}_{E}} h_{E}\|J_{\ell}\|_{0,\ell}^{2}}\right)^{1/2}\|v\|_{1,\O} \\ 
 + \left(\displaystyle{\sum_{E\in\mathcal{T}_{h}} a_{h}^{E}(u_{h} - \PiK u_{h}, u_{I} - \PiK u_{h})}\right)^{1/2}\|v\|_{1,\O}.
\end{multline} 
Hence, gathering \eqref{eq:T1}, \eqref{eq:T2}, \eqref{eq:T3}, \eqref{eq:T4}, \eqref{eq:T5} and \eqref{eq:T6} and invoking the inf-sup condition \eqref{eq:inf-supB}, we conclude the proof for the estimate of $\|u-u_h\|_{1,\O}$. The proof for the estimate $\|u^*-u_h^*\|_{1,\O}$ follows the same arguments.
\end{proof}

Now, we prove similar estimates for the projection errors $u - \Pi u_{h}$ and $u - \Pi^{\nabla}u_{h}$, and the same estimates for the dual projection errors. This upper bounds are necessary to estimate the eigenvalue error in terms of $\eta$ and $\eta^{*}$.
\begin{lemma}\label{lema46}
The following estimate holds
\begin{equation*}
\begin{split}
\|u - u_{h}\|_{1,\O} + \|u - \Pi u_{h}\|_{0,\O} + |u - \Pi^{\nabla}u_{h}|_{1,h} &\lesssim \eta + |\lambda - \lambda_{h}| + \|u - u_{h}\|_{0,\O}, \\
\|u^{*} - u_{h}^{*}\|_{1,\O} + \|u^{*} - \Pi u_{h}^{*}\|_{0,\O} + |u^{*} - \Pi^{\nabla}u_{h}^{*}|_{1,h} &\lesssim \eta + |\lambda - \lambda_{h}| + \|u^{*} - u_{h}^{*}\|_{0,\O},
\end{split}
\end{equation*}
where the hidden constants depends on $\kappa, \vartheta$ but not on $h$.
\end{lemma}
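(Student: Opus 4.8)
The plan is to notice that the energy term $\|u-u_h\|_{1,\O}$ on the left is already bounded by the right-hand side thanks to Lemma~\ref{lema45}, so that only the two projection errors $\|u-\Pi u_h\|_{0,\O}$ and $|u-\Pi^{\nabla}u_h|_{1,h}$ remain. For each of these I would insert $u_h$ by the triangle inequality. Since $u,u_h\in V$ are conforming, the broken seminorm of $u-u_h$ coincides with its global seminorm, so
\[
|u-\Pi^{\nabla}u_h|_{1,h}\le |u-u_h|_{1,h}+|u_h-\Pi^{\nabla}u_h|_{1,h}\le \|u-u_h\|_{1,\O}+|u_h-\Pi^{\nabla}u_h|_{1,h},
\]
\[
\|u-\Pi u_h\|_{0,\O}\le \|u-u_h\|_{0,\O}+\|u_h-\Pi u_h\|_{0,\O}.
\]
The terms $\|u-u_h\|_{1,\O}$ and $\|u-u_h\|_{0,\O}$ are controlled by Lemma~\ref{lema45} and by the right-hand side itself, so the whole matter reduces to estimating the purely discrete quantities $|u_h-\Pi^{\nabla}u_h|_{1,h}$ and $\|u_h-\Pi u_h\|_{0,\O}$ by the estimator $\eta$.

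The heart of the argument, and the step I expect to be the main obstacle, is the local bound $|u_h-\PiK u_h|_{1,E}\lesssim\Theta_E$. Recalling that $\PiK$ and $\Pi^{E}$ coincide on $\VK$ and that $\PiK$ is idempotent, the element quantity $u_h-\PiK u_h$ lies in $\ker(\PiK)$; hence the consistency part of $a_h^E$ vanishes and
\[
\Theta_E^2=a_h^E(u_h-\PiK u_h,u_h-\PiK u_h)=S^E(u_h-\PiK u_h,u_h-\PiK u_h).
\]
Applying the lower stability bound in \eqref{eq:staba} to $w_h=u_h-\PiK u_h\in\VK\cap\ker(\PiK)$, together with the coercivity $a^E(w_h,w_h)\ge\kappa_0|w_h|_{1,E}^2$ furnished by $\kappa\ge\kappa_0$, yields $a_0\kappa_0|u_h-\PiK u_h|_{1,E}^2\le\Theta_E^2$, that is $|u_h-\PiK u_h|_{1,E}\lesssim\Theta_E$. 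Summing over the elements gives $|u_h-\Pi^{\nabla}u_h|_{1,h}\lesssim\eta$ at once.

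It then remains to transfer this to the $L^2$ projection error. Since $\Pi^{E}$ projects onto $\mathbb{P}_1(E)$, which contains the constants, the function $u_h-\Pi^{E}u_h$ has zero mean on $E$, so a Poincar\'e inequality on the shape-regular element gives $\|u_h-\Pi^{E}u_h\|_{0,E}\lesssim h_E|u_h-\Pi^{E}u_h|_{1,E}\lesssim h_E\Theta_E$; summing and using $h_E\le h\lesssim 1$ produces $\|u_h-\Pi u_h\|_{0,\O}\lesssim\eta$. Inserting the two discrete bounds into the triangle inequalities above and combining with Lemma~\ref{lema45} yields the claimed primal estimate. The dual estimate follows verbatim after replacing $u,u_h,\Pi^{\nabla}u_h,\Theta_E,\eta$ by $u^{*},u_h^{*},\Pi^{\nabla}u_h^{*},\Theta_E^{*},\eta^{*}$ and invoking the dual part of Lemma~\ref{lema45}. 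Once the local equivalence $\Theta_E\gtrsim|u_h-\PiK u_h|_{1,E}$ is in place, everything else is routine triangle-inequality and approximation bookkeeping.
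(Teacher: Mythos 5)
Your proposal is correct and follows essentially the same route as the paper: insert $u_h$ by the triangle inequality, reduce everything to the purely discrete defects $\|u_h-\Pi^{E}u_h\|_{0,E}$ and $|u_h-\PiK u_h|_{1,E}$, bound these by $\Theta_E\le\eta_E$, and conclude with Lemma~\ref{lema45}. In fact your justification of the key local bound --- identifying $\Theta_E^2=S^E(u_h-\PiK u_h,u_h-\PiK u_h)$ via idempotency of $\PiK$ and then invoking the lower stability bound in \eqref{eq:staba} together with the coercivity of $a^E$, plus a Poincar\'e argument for the $\L^2$ piece --- is more complete than the paper's terse appeal to the best-approximation property \eqref{eq:bestapproximation}, which by itself does not control the $\H^1$-seminorm term.
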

\begin{proof}
Let $E \in \mathcal{T}_{h}$. Then, from the triangle inequality we have
\begin{equation*}
\|u - \Pi^{E}u_{h}\|_{0,E} + |u - \PiK u_{h}|_{1,E} \leq 2\|u - u_{h}\|_{1,E} + \|u_{h} - \Pi^{E}u_{h}\|_{0,E} + \|u_{h} - \PiK u_{h}\|_{1,E}.
\end{equation*}
Then, using \eqref{eq:bestapproximation}, we obtain
\begin{equation}\label{eq:bestapprox}
\|u_{h} - \Pi^{E}u_{h}\|_{0,E} + \|u_{h} - \PiK u_{h}\|_{1,E} \lesssim \Theta_{E} \leq \eta_{E}.
\end{equation}
The proof follows from \eqref{eq:bestapprox}, summing over all polygons and Lemma \ref{lema45}. The proof is analogous for the second estimate.
\end{proof}

Now, we have the following result for the error of the eigenvalues.
\begin{lemma}\label{lemalambda}
The following estimate holds
\begin{equation*}
|\lambda - \lambda_{h}| \lesssim \eta^{2} + \eta^{*2} + |\lambda - \lambda_{h}|^{2} + \|u - u_{h}\|_{0,\O}^{2} + \|u^{*} - u_{h}^{*}\|_{0,\O}^{2},
\end{equation*}
where the hidden constant depends on $\kappa, \vartheta$, but not of $h$.
\end{lemma}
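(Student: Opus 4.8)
The plan is to reduce the estimate to a single algebraic identity for the eigenvalue error, written as a product of the primal and dual errors plus the consistency errors of the discrete forms, and then to feed each factor into the reliability bounds of Lemmas \ref{lema45} and \ref{lema46}. Recalling the error notation $\texttt{e}_{h}=u-u_{h}$, $\texttt{e}_{h}^{*}=u^{*}-u_{h}^{*}$ and that, for a simple eigenvalue, $\lambda=\overline{\lambda^{*}}$ and $\lambda_{h}=\overline{\lambda_{h}^{*}}$, I would first expand $\CB(\texttt{e}_{h},\texttt{e}_{h}^{*})-\lambda_{h}c(\texttt{e}_{h},\texttt{e}_{h}^{*})$ by sesquilinearity. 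Testing the primal problem \eqref{eq:spectral1} with $v=u^{*}$ and with $v=u_{h}^{*}$, the dual problem \eqref{eq:dual_problem} with $v=u_{h}$ (so that $\CB(u_{h},u^{*})=\lambda c(u_{h},u^{*})$), and writing $\CB(u_{h},u_{h}^{*})=\CB_{h}(u_{h},u_{h}^{*})+[\CB-\CB_{h}](u_{h},u_{h}^{*})=\lambda_{h}c_{h}(u_{h},u_{h}^{*})+[\CB-\CB_{h}](u_{h},u_{h}^{*})$, all the cross terms collapse and I obtain the identity
\begin{equation*}
(\lambda-\lambda_{h})\,\gamma = \CB(\texttt{e}_{h},\texttt{e}_{h}^{*}) - \lambda_{h}c(\texttt{e}_{h},\texttt{e}_{h}^{*}) - \big[\CB-\CB_{h}\big](u_{h},u_{h}^{*}) - \lambda_{h}\big[c_{h}-c\big](u_{h},u_{h}^{*}),
\end{equation*}
where $\gamma:=c(u,u^{*})-c(u,u_{h}^{*})-c(u_{h},u^{*})$.

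Next I would show that the scalar $\gamma$ stays bounded away from zero for $h$ small. The eigenfunctions are normalized so that $\CB(u,u^{*})=1$ (as in Theorem \ref{eq:quadord}), hence $c(u,u^{*})=1/\lambda\neq0$; since $u_{h}\to u$ and $u_{h}^{*}\to u^{*}$ in $L^{2}(\O,\mathbb{C})$ by Theorem \ref{teoremamejororden}, one has $\gamma\to-c(u,u^{*})\neq0$, so that $|\gamma|\geq|c(u,u^{*})|/2$ for $h<h_{0}$ and $|\lambda-\lambda_{h}|$ is controlled by the modulus of the right-hand side.

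For the two error-product terms I would use the continuity of $\CB(\cdot,\cdot)$ with constant $M_{1}$ together with the Cauchy--Schwarz inequality for $c(\cdot,\cdot)$ and $\|\cdot\|_{0,\O}\leq\|\cdot\|_{1,\O}$, to get $|\CB(\texttt{e}_{h},\texttt{e}_{h}^{*})|+|\lambda_{h}|\,|c(\texttt{e}_{h},\texttt{e}_{h}^{*})|\lesssim\|u-u_{h}\|_{1,\O}\,\|u^{*}-u_{h}^{*}\|_{1,\O}$. Inserting the reliability bounds $\|u-u_{h}\|_{1,\O}\lesssim\eta+|\lambda-\lambda_{h}|+\|u-u_{h}\|_{0,\O}$ and its dual analogue from Lemmas \ref{lema45}--\ref{lema46}, and applying Young's inequality $ab\leq\tfrac12(a^{2}+b^{2})$ to the resulting product, produces exactly the squared quantities $\eta^{2}+\eta^{*2}+|\lambda-\lambda_{h}|^{2}+\|u-u_{h}\|_{0,\O}^{2}+\|u^{*}-u_{h}^{*}\|_{0,\O}^{2}$.

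Finally, for the consistency terms $[\CB-\CB_{h}](u_{h},u_{h}^{*})$ and $[c_{h}-c](u_{h},u_{h}^{*})$ I would argue element by element exactly as in the treatment of terms $(\textrm{IV})$--$(\textrm{V})$ of Theorem \ref{eq:quadord} and of \eqref{eq:aa}--\eqref{eq:bb} in Lemma \ref{eq:duality1}: inserting $\Pi^{\nabla}$ and $\Pi^{E}$, using the definition of the projectors, the stability bounds \eqref{eq:staba}--\eqref{eq:stabc} and the best-approximation property \eqref{eq:bestapproximation}, each local contribution is dominated by $\eta_{E}\,\eta_{E}^{*}$; summing over $E$ with Cauchy--Schwarz and Young gives $|[\CB-\CB_{h}](u_{h},u_{h}^{*})|+|\lambda_{h}|\,|[c_{h}-c](u_{h},u_{h}^{*})|\lesssim\eta\,\eta^{*}\lesssim\eta^{2}+\eta^{*2}$. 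Collecting the bounds and dividing by $|\gamma|$ yields the claim. I expect the main obstacle to be precisely this last step, namely verifying that the convective part $b^{E}(\cdot,\cdot)$ of the consistency error is genuinely controlled by the computable indicators $\Theta_{E},\Theta_{E}^{*}$ (and the associated residual contributions inside $\eta_{E},\eta_{E}^{*}$), which requires the same careful bookkeeping under the $L^{2}$- and $H^{1}$-projections as in the a priori analysis, together with the uniform lower bound on $|\gamma|$.
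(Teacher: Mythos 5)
Your proposal follows essentially the same route as the paper's proof: the same algebraic identity expressing $(\lambda-\lambda_{h})$ times a normalizing scalar as $\CB(\texttt{e}_{h},\texttt{e}_{h}^{*})$ minus an eigenvalue-weighted $c(\texttt{e}_{h},\texttt{e}_{h}^{*})$ plus the consistency defects $[\CB_{h}-\CB](u_{h},u_{h}^{*})$ and $[c_{h}-c](u_{h},u_{h}^{*})$, a lower bound on the normalizer, Cauchy--Schwarz and Young on the error products, and Lemmas \ref{lema45}--\ref{lema46} to convert everything into $\eta^{2}+\eta^{*2}+|\lambda-\lambda_{h}|^{2}+\|u-u_{h}\|_{0,\O}^{2}+\|u^{*}-u_{h}^{*}\|_{0,\O}^{2}$. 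The only deviations are minor: the paper normalizes by $c(u_{h},u_{h}^{*})$ and cites \cite[Theorem 3.2]{MR3212379} for the lower bound where you normalize by $\gamma$ and argue via $L^{2}$-convergence, and the paper bounds the consistency terms through the projection errors of $u_{h},u_{h}^{*}$ before invoking Lemma \ref{lema46} rather than directly by $\eta_{E}\eta_{E}^{*}$ --- the convective cross terms you flag as the main obstacle are indeed the delicate point there as well.
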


\begin{proof}
First, we have the following identity
\begin{multline*}
(\lambda_{h} - \lambda)c(u_{h},u_{h}^{*}) = \CB(u - u_{h}, u^{*} - u_{h}^{*}) - \lambda c(u - u_{h}, u^{*} - u_{h}^{*}) \\ 
+ \lambda_{h}\left[c(u_{h},u_{h}^{*}) - c_{h}(u_{h},u_{h}^{*})\right] + \left[\CB_{h}(u_{h}, u_{h}^{*}) - \CB(u_{h},u_{h}^{*})\right].
\end{multline*}
Observe that the term $c(u_{h},u_{h}^{*})$ is needed to be lower bounded. The existence of such lower bound for $c(u_{h},u_{h}^{*})$ follows from \cite[Theorem 3.2]{MR3212379}, i.e., there exists $C>0$ such that $c(u_{h},u_{h}^{*}) > C$. Then, taking modulus and applying triangle inequality, we have 
\begin{multline}\label{eq:doubleorder}
|\lambda - \lambda_{h}| \lesssim \underbrace{|\CB(u - u_{h}, u^{*} - u_{h}^{*})|}_{\widehat{T}_{1}} + \underbrace{|c(u - u_{h}, u^{*} - u_{h}^{*})|}_{\widehat{T}_{2}} + \underbrace{| \CB_{h}(u_{h}, u_{h}^{*}) - \CB(u_{h},u_{h}^{*})|}_{\widehat{T}_{3}} \\
+ \underbrace{|c(u_{h},u_{h}^{*}) - c_{h}(u_{h},u_{h}^{*})|}_{\widehat{T}_{4}},
\end{multline}
where the hidden constant depends precisely on such $C > 0$. Our task is to estimate the four terms in the right-hand side. For $\widehat{T}_{1}$, using Cauchy-Schwarz inequality we obtain
\begin{equation}\label{eq:stimaT1}
\widehat{T}_{1} \leq C_{\kappa,\vartheta} \|u - u_{h}\|_{1,\O}\|u^{*} - u_{h}^{*}\|_{1,\O} \lesssim \|u - u_{h}\|_{1,\O}^{2} + \|u^{*} - u_{h}^{*}\|_{1,\O}^{2}.
\end{equation} 
Analogously for $T_{2}$, applying the Cauchy-Schwarz inequality  we have
\begin{equation}\label{eq:stimaT2}
\widehat{T}_{2} \leq \|u - u_{h}\|_{0,\O}\|u^{*} - u_{h}^{*}\|_{0,\O} \lesssim \|u - u_{h}\|_{0,\O}^{2} + \|u^{*} - u_{h}^{*}\|_{0,\O}^{2}.
\end{equation}
For $\widehat{T}_{3}$, applying triangle inequality, Cauchy-Schwarz inequality, and the boundedness of $S^{E}(\cdot,\cdot)$, we have
\begin{multline}\label{eq:stimaT3}
\widehat{T}_{3} \leq \left\lvert \displaystyle{\sum_{E \in \mathcal{T}_{h}} a^{E}(\PiK u_{h} - u_{h}, \PiK u_{h}^{*} - u_{h}^{*})} \right\lvert \\ 
+ \left\lvert \displaystyle{\sum_{E \in \mathcal{T}_{h}} S^{E}(u_{h} - \PiK u_{h},u_{h}^{*} - \PiK u_{h}^{*})} \right\lvert \\
+ \left\lvert \displaystyle{\sum_{E \in \mathcal{T}_{h}} b^{E}(\PiK u_{h} - u_{h}, \Pi^{E}u_{h}^{*}) - b^{E}(u_{h},u_{h}^{*} - \Pi^{E}u_{h}^{*})} \right\lvert \\
\leq C_{\kappa,\vartheta}\left(|u_{h} - \Pi^{\nabla} u_{h}|_{1,h}|u_{h}^{*} - \Pi^{\nabla}u_{h}^{*}|_{1,h} + |u_{h} - \Pi^{\nabla}u_{h}|_{1,h}\|\Pi u_{h}\|_{0,\O}\right. \\ 
\left. + |u_{h}|_{1,\O}\|u_{h}^{*} - \Pi u_{h}^{*}\|_{0,\O}\right) \\ 
\lesssim |u_{h} - \Pi^{\nabla} u_{h}|_{1,h}^{2} + |u_{h}^{*} - \Pi^{\nabla}u_{h}^{*}|_{1,h}^{2} + |u_{h} - \Pi^{\nabla}u_{h}|_{1,h}^{2} + \|u_{h}^{*} - \Pi u_{h}^{*}\|_{0,\O}^{2},
\end{multline}
where in the last inequality we have used the Archimedean property on $\mathbb{R}$. Finally, for $\widehat{T}_{4}$, using Cauchy-Schwarz inequality  and the boundedness of $S_{0}^{E}(\cdot,\cdot)$, we obtain
\begin{equation}\label{eq:stimaT4}
\widehat{T}_{4} \lesssim \|u_{h} - \Pi u_{h}\|_{0,\O}\|u_{h}^{*} - \Pi u_{h}^{*}\|_{0,\O} \lesssim \|u_{h} - \Pi u_{h}\|_{0,\O}^{2} + \|u_{h}^{*} - \Pi u_{h}^{*}\|_{0,\O}^{2}.
\end{equation}
Therefore, gathering \eqref{eq:doubleorder}, \eqref{eq:stimaT1}, \eqref{eq:stimaT2}, \eqref{eq:stimaT3} and \eqref{eq:stimaT4} and using triangle inequality, we obtain 
\begin{multline*}
|\lambda - \lambda_{h}| \lesssim \|u - u_{h}\|_{1,\O}^{2} + \|u^{*} - u_{h}^{*}\|_{1,\O}^{2} + |u - \Pi^{\nabla}u_{h}|_{1,h}^{2} + |u^{*} - \Pi^{\nabla}u_{h}^{*}|_{1,h}^{2} + \|u - \Pi u_{h}\|_{0,\O}^{2} \\
+ \|u^{*} - \Pi u_{h}^{*}\|_{0,\O}^{2} + \|u - u_{h}\|_{0,\O}^{2} + \|u^{*} - u_{h}^{*}\|_{0,\O}^{2}.
\end{multline*}
Thus, invoking Lemma \ref{lema46}, we conclude the proof.
\end{proof}

\begin{remark}
It is important to mention that the additional terms accompanying the estimators on the right-hand side of Lemmas \ref{lema46} and \ref{lemalambda} are high-order terms, as we have proved  in Theorems \ref{eq:quadord} and \ref{teoremamejororden}.
\end{remark}
%
%
%

\subsection{Efficiency}
Now our aim is to prove that our proposed estimator is locally efficient. To do this task, the usual way to prove this feature is through the use of bubble functions. Hence, we present in the following results standard estimates
for bubble functions on two dimensions that will be useful in what follows (see \cite{MR1885308,MR3059294,MR3719046}).
\begin{lemma}[Interior bubble functions]
\label{burbujainterior}
For any $E\in \CT_h$, let $\psi_{E}$ be the corresponding interior bubble function.
Then, there exists a constant $C>0$ 
independent of  $h_E$ such that
\begin{align*}
C^{-1}\|q\|_{0,E}^2&\leq \int_{E}\psi_{E} q^2\leq \|q\|_{0,E}^2\qquad \forall q\in \mathbb{P}_k(E),\\
C^{-1}\| q\|_{0,E}&\leq \|\psi_{E} q\|_{0,E}+h_E\|\nabla(\psi_{E} q)\|_{0,E}\leq C\|q\|_{0,E}\qquad \forall q\in \mathbb{P}_k(E).
\end{align*}
\end{lemma}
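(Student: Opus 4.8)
The statement to prove is the standard interior bubble function lemma (Lemma~\ref{burbujainterior}), which asserts two norm equivalences for polynomials multiplied by the interior bubble $\psi_E$ supported on $E$.

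My plan is to reduce everything to a reference configuration and then rescale, which is the cleanest route given the mesh assumptions \textbf{A1} and \textbf{A2}. First, I would recall that the interior bubble $\psi_E$ is constructed on the sub-triangulation $\CT_h^E$ of $E$ (built by connecting the vertices of $E$ to the center of the star-shapedness ball), as a piecewise cubic that vanishes on $\partial E$ and is normalized so that $0 \le \psi_E \le 1$ with $\max \psi_E = 1$. The key structural point is that, thanks to \textbf{A1} and \textbf{A2}, this sub-triangulation is shape-regular uniformly in $h$, so each sub-triangle is affine-equivalent to a fixed reference triangle $\widehat{T}$ with transformation Jacobians controlled by $h_E$.

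For the first pair of inequalities, I would work on a single element via scaling to a reference element $\widehat{E}$ of unit diameter. On $\widehat{E}$ the map $\widehat{q} \mapsto \bigl(\int_{\widehat{E}} \psi_{\widehat{E}} \widehat{q}^{\,2}\bigr)^{1/2}$ defines a norm on the finite-dimensional space $\mathbb{P}_k(\widehat{E})$: it is clearly nonnegative and homogeneous, the triangle inequality follows from Cauchy--Schwarz since $\psi_{\widehat{E}} \ge 0$, and positivity holds because $\psi_{\widehat{E}} > 0$ on the interior, so $\int \psi_{\widehat{E}} \widehat{q}^{\,2} = 0$ forces $\widehat{q} \equiv 0$. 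Since all norms on the finite-dimensional space $\mathbb{P}_k(\widehat{E})$ are equivalent, there is a constant $\widehat{C}$, depending only on $k$ and $\widehat{E}$, with $\widehat{C}^{-1}\|\widehat{q}\|_{0,\widehat{E}}^2 \le \int_{\widehat{E}} \psi_{\widehat{E}} \widehat{q}^{\,2} \le \|\widehat{q}\|_{0,\widehat{E}}^2$, the upper bound coming directly from $\psi_{\widehat{E}} \le 1$. Transforming back to $E$, the Jacobian factors $|E|/|\widehat{E}|$ appear on both sides of the integral inequalities and cancel, so the constant $C$ is scale-invariant and independent of $h_E$. This yields the first display.

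For the second pair, I would again pass to the reference element and use norm equivalence in $\mathbb{P}_k(\widehat{E})$, noting that $\widehat{q} \mapsto \|\psi_{\widehat{E}} \widehat{q}\|_{0,\widehat{E}} + \|\nabla(\psi_{\widehat{E}} \widehat{q})\|_{0,\widehat{E}}$ is again a norm on the finite-dimensional space; here $\psi_{\widehat{E}} \widehat{q}$ is piecewise polynomial on the fixed sub-triangulation of $\widehat{E}$, so $\nabla(\psi_{\widehat{E}}\widehat{q})$ is well defined and bounded. Equivalence with $\|\widehat{q}\|_{0,\widehat{E}}$ gives the result on $\widehat{E}$, and then the scaling argument produces the $h_E$ weight on the gradient term: under the affine map the $L^2$ norms scale like $|E|^{1/2}$ while the gradient picks up an extra factor $h_E^{-1}$, which is exactly compensated by the $h_E$ multiplying $\|\nabla(\psi_E q)\|_{0,E}$, so the combination $\|\psi_E q\|_{0,E} + h_E\|\nabla(\psi_E q)\|_{0,E}$ scales homogeneously like $\|q\|_{0,E}$. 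The main obstacle, and the point requiring the most care, is ensuring the scaling constants are genuinely uniform across the whole mesh family: this is precisely where \textbf{A1} and \textbf{A2} enter, guaranteeing that the sub-triangles are uniformly shape-regular so that a single reference configuration controls all elements simultaneously. I would therefore state this uniformity explicitly before invoking the finite-dimensional norm equivalence, since without it the constant $C$ could in principle degenerate as $h \to 0$.
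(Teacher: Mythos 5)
The paper does not prove this lemma at all: it is quoted as a standard result with citations to \cite{MR1885308,MR3059294,MR3719046}, so there is no in-paper argument to compare against. Your scaling-plus-finite-dimensional-norm-equivalence proof is exactly the standard argument found in those references, and it is correct; the only point to phrase carefully is that for a general polygon $E$ there is no single fixed reference element, so the uniformity of the constants must come from the shape-regular sub-triangulation $\CT_h^{E}$ guaranteed by \textbf{A1} and \textbf{A2} (or, equivalently, from compactness of the family of rescaled elements $h_E^{-1}E$), which you do acknowledge at the end.
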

\begin{lemma}[Edge bubble functions]
\label{burbuja}
For any $E\in \CT_h$ and $\ell\in\CE_{E}$, let $\psi_{\ell}$
be the corresponding edge bubble function. Then, there exists
a constant $C>0$ independent of $h_E$ such that
 \begin{equation*}
C^{-1}\|q\|_{0,\ell}^2\leq \int_{\ell}\psi_{\ell} q^2 \leq \|q\|_{0,\ell}^2\qquad
\forall q\in \mathbb{P}_k(\ell).
\end{equation*}
Moreover, for all $q\in\mathbb{P}_k(\ell)$, there exists an extension of  $q\in\mathbb{P}_k(E)$ (again denoted by $q$) such that
 \begin{align*}
h_E^{-1/2}\|\psi_{\ell} q\|_{0,E}+h_E^{1/2}\|\nabla(\psi_{\ell} q)\|_{0,E}&\lesssim \|q\|_{0,\ell}.
\end{align*}
\end{lemma}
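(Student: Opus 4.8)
The plan is to reduce everything to a fixed reference configuration by an affine scaling argument, exploiting that assumptions \textbf{A1} and \textbf{A2} make the sub-triangulation $\{\CT_h\}$ shape-regular. Since $\psi_\ell$ is supported on the triangle $T\in\CT_h^E$ having $\ell$ as an edge, with $h_T\simeq h_E$, I may work on this single triangle; let $F:\widehat{T}\to T$ be the affine map onto a fixed reference triangle $\widehat{T}$ with reference edge $\widehat{\ell}$, so that $|\det DF|\simeq h_E^2$ and $\|DF\|\simeq\|DF^{-1}\|^{-1}\simeq h_E$. Throughout I write $\widehat{v}:=v\circ F$ and take $\psi_\ell=\widehat{\psi}_{\widehat{\ell}}\circ F^{-1}$, where $\widehat{\psi}_{\widehat{\ell}}$ is the reference edge bubble (the product of the two barycentric coordinates vanishing at the endpoints of $\widehat{\ell}$), which vanishes on $\partial\widehat{T}\setminus\widehat{\ell}$ and is strictly positive in the interior of $\widehat{\ell}$.

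For the first pair of inequalities, the upper bound $\int_\ell\psi_\ell q^2\le\|q\|_{0,\ell}^2$ is immediate from $0\le\psi_\ell\le 1$. For the lower bound I observe that on the finite-dimensional space $\mathbb{P}_k(\widehat{\ell})$ the two maps $\widehat{q}\mapsto(\int_{\widehat{\ell}}\widehat{\psi}_{\widehat{\ell}}\widehat{q}^{\,2})^{1/2}$ and $\widehat{q}\mapsto\|\widehat{q}\|_{0,\widehat{\ell}}$ are both norms — the first because $\widehat{\psi}_{\widehat{\ell}}>0$ in the interior of $\widehat{\ell}$ — hence equivalent, with constants depending only on $k$ and $\widehat{\ell}$. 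Transporting back to $\ell$ by $F$, the length element scales identically on both integrals (each picks up the same factor $|\ell|/|\widehat{\ell}|$), so the equivalence constant is independent of $h_E$, yielding the claimed bound $C^{-1}\|q\|_{0,\ell}^2\le\int_\ell\psi_\ell q^2$.

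For the extension estimate, I first fix a polynomial extension of $\widehat{q}\in\mathbb{P}_k(\widehat{\ell})$ to $\widehat{q}\in\mathbb{P}_k(\widehat{T})$ (for instance the constant extension in the direction transversal to $\widehat{\ell}$), and then define $q\in\mathbb{P}_k(E)$ as the push-forward $\widehat{q}\circ F^{-1}$. On the reference element the inequalities $\|\widehat{\psi}_{\widehat{\ell}}\widehat{q}\|_{0,\widehat{T}}\lesssim\|\widehat{q}\|_{0,\widehat{\ell}}$ and $\|\widehat{\nabla}(\widehat{\psi}_{\widehat{\ell}}\widehat{q})\|_{0,\widehat{T}}\lesssim\|\widehat{q}\|_{0,\widehat{\ell}}$ hold by norm equivalence on the finite-dimensional space, with constants depending only on $k$ and $\widehat{T}$. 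It then remains to track the scaling factors: using $\|v\|_{0,T}\simeq h_E\|\widehat{v}\|_{0,\widehat{T}}$, $\|\nabla v\|_{0,T}\simeq\|\widehat{\nabla}\widehat{v}\|_{0,\widehat{T}}$ and $\|q\|_{0,\ell}\simeq h_E^{1/2}\|\widehat{q}\|_{0,\widehat{\ell}}$, I obtain $\|\psi_\ell q\|_{0,E}\lesssim h_E^{1/2}\|q\|_{0,\ell}$ and $\|\nabla(\psi_\ell q)\|_{0,E}\lesssim h_E^{-1/2}\|q\|_{0,\ell}$; multiplying the first by $h_E^{-1/2}$ and the second by $h_E^{1/2}$ and adding gives exactly the asserted bound $h_E^{-1/2}\|\psi_\ell q\|_{0,E}+h_E^{1/2}\|\nabla(\psi_\ell q)\|_{0,E}\lesssim\|q\|_{0,\ell}$.

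The main obstacle is essentially geometric bookkeeping: since $E$ is a polygon rather than a triangle, one must argue on the shape-regular sub-triangulation $\CT_h^E$ and ensure that the chosen extension keeps $\psi_\ell q$ supported only on the triangle sharing $\ell$ and vanishing on the remaining edges, so that all hidden constants stay uniform in $h_E$. This is precisely where \textbf{A1}--\textbf{A2}, and hence the shape-regularity of $\{\CT_h\}$ noted after the definition of $\CT_h^E$, enter decisively; everything else is the standard affine scaling already employed in \cite{MR1885308,MR3059294,MR3719046}.
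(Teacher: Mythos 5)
Your proof is correct: the affine scaling to a reference triangle of the shape-regular sub-triangulation $\CT_h^E$, the norm-equivalence argument on the finite-dimensional spaces $\mathbb{P}_k(\widehat{\ell})$ and $\mathbb{P}_k(\widehat{T})$, and the bookkeeping of the factors $h_E$, $h_E^{1/2}$ and $1$ for the $L^2$, edge and gradient norms all go through. The paper itself gives no proof of this lemma --- it is quoted as a standard two-dimensional bubble-function estimate with a citation to \cite{MR1885308,MR3059294,MR3719046} --- and your argument is precisely the standard one contained in those references, so there is nothing further to compare.
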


We begin with by estimating  the volumetric terms $\|\Upsilon_{E}\|_{0,E}$, $\|\Upsilon_{E}^{*}\|_{0,E}$.
\begin{lemma}\label{eq:volumetrico}
The following estimates hold
\begin{multline*}
\|\Upsilon_{E}\|_{0,E} \lesssim h_{E}^{-1}\left(|u - u_{h}|_{1,E} + \Theta_{E} + h_{E}(\|u_{h} - \Pi^{E}u_{h}\|_{0,E} + \|\lambda u - \lambda_{h}u_{h}\|_{0,E})\right), \\
\|\Upsilon_{E}^{*}\|_{0,E} \lesssim h_{E}^{-1}\left(|u^{*} - u_{h}^{*}|_{1,E} + \Theta_{E}^{*} + h_{E}(\|u_{h}^{*} - \Pi^{E}u_{h}^{*}\|_{0,E} + \|\lambda u^{*} - \lambda_{h}u_{h}^{*}\|_{0,E})\right),
\end{multline*}
where the hidden constants depends on $\kappa, \vartheta$ but not on $h_{E}$.
\end{lemma}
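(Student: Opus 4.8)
The plan is to use the standard interior-bubble-function technique together with the primal error identity of Lemma~\ref{eq:errors}. Since the two bounds are obtained by identical arguments (the dual one invoking the dual error identity), I would only detail the estimate for $\|\Upsilon_{E}\|_{0,E}$. Because $\vartheta$ is not piecewise polynomial, $\Upsilon_{E}$ is not a polynomial either, so I would first replace it by a polynomial approximation $\Upsilon_{E}^{\pi} \in \mathbb{P}_{k}(E)$ (for instance its $L^{2}(E)$-projection), observing that the consistency error $\|\Upsilon_{E} - \Upsilon_{E}^{\pi}\|_{0,E}$ is controlled by $h_{E}$ times the smoothness of $\vartheta$ and $|\PiK u_{h}|_{1,E}$, hence is a higher-order term that can be absorbed at the end. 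I would then introduce the localized test function $v_{E} := \psi_{E}\Upsilon_{E}^{\pi}$, extended by zero to $\O$, so that $v_{E} \in V = H_{0}^{1}(\O,\mathbb{C})$ and $v_{E}$ vanishes on $\partial E$.

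The key step is to evaluate the primal error identity of Lemma~\ref{eq:errors} at $v = v_{E}$. Since $v_{E}$ is supported in $E$ and vanishes on every edge of $E$, all edge-jump contributions drop out and the sums over $\CT_{h}$ collapse to the single element $E$, leaving (after using $\nabla\cdot(\kappa(\mathbf{x})\nabla\PiK u_{h}) = 0$)
\begin{multline*}
\int_{E} \Upsilon_{E} v_{E} = \CB^{E}(\texttt{e}_{h}, v_{E}) - \lambda c^{E}(u,v_{E}) + \lambda_{h}c^{E}(u_{h},v_{E}) \\
+ a^{E}(u_{h} - \PiK u_{h}, v_{E}) + b^{E}(u_{h} - \PiK u_{h}, v_{E}) - \lambda_{h}c^{E}(u_{h} - \Pi^{E}u_{h}, v_{E}).
\end{multline*}
On the left-hand side I would use the first inequality of Lemma~\ref{burbujainterior} to write $C^{-1}\|\Upsilon_{E}^{\pi}\|_{0,E}^{2} \leq \int_{E}\psi_{E}(\Upsilon_{E}^{\pi})^{2} = \int_{E}\Upsilon_{E}^{\pi} v_{E}$, splitting $\int_{E}\Upsilon_{E}^{\pi} v_{E} = \int_{E}\Upsilon_{E} v_{E} + \int_{E}(\Upsilon_{E}^{\pi} - \Upsilon_{E})v_{E}$, the last integral being higher order as noted above.

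On the right-hand side I would bound each term by Cauchy--Schwarz, using the bubble scalings $\|v_{E}\|_{0,E} \lesssim \|\Upsilon_{E}^{\pi}\|_{0,E}$ and $|v_{E}|_{1,E} \lesssim h_{E}^{-1}\|\Upsilon_{E}^{\pi}\|_{0,E}$ from the second inequality of Lemma~\ref{burbujainterior}. The term $\CB^{E}(\texttt{e}_{h},v_{E})$ yields $|u-u_{h}|_{1,E}(h_{E}^{-1}+1)\|\Upsilon_{E}^{\pi}\|_{0,E}$ (both its diffusion and convection parts involve $\nabla\texttt{e}_{h}$); the pair $-\lambda c^{E}(u,v_{E}) + \lambda_{h}c^{E}(u_{h},v_{E}) = -\int_{E}(\lambda u - \lambda_{h}u_{h})\overline{v_{E}}$ gives $\|\lambda u - \lambda_{h}u_{h}\|_{0,E}\|\Upsilon_{E}^{\pi}\|_{0,E}$; the two terms in $u_{h}-\PiK u_{h}$ give $|u_{h}-\PiK u_{h}|_{1,E}(h_{E}^{-1}+1)\|\Upsilon_{E}^{\pi}\|_{0,E}$; and the last term gives $\|u_{h}-\Pi^{E}u_{h}\|_{0,E}\|\Upsilon_{E}^{\pi}\|_{0,E}$. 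Here I would exploit that $\PiK(u_{h}-\PiK u_{h}) = 0$, so $\Theta_{E}^{2} = a_{h}^{E}(u_{h}-\PiK u_{h}, u_{h}-\PiK u_{h}) = S^{E}(u_{h}-\PiK u_{h}, u_{h}-\PiK u_{h})$, whence the lower stability bound in~\eqref{eq:staba} gives $|u_{h}-\PiK u_{h}|_{1,E} \lesssim \Theta_{E}$. Dividing through by $\|\Upsilon_{E}^{\pi}\|_{0,E}$, absorbing the contributions without $h_{E}^{-1}$ into those carrying $h_{E}^{-1}$ (using $h_{E}\leq 1$), and factoring out $h_{E}^{-1}$ delivers the claimed bound; the dual estimate then follows verbatim from the dual error identity. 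The main obstacle is the careful bookkeeping of which contributions scale like $h_{E}^{-1}$ (the $H^{1}$-type ones arising from $|v_{E}|_{1,E}$) versus $h_{E}^{0}$ (the $L^{2}$-type ones), together with the routine but necessary polynomial-approximation step forced by the non-polynomial coefficient $\vartheta$.
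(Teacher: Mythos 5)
Your proposal is correct and follows essentially the same route as the paper's proof: test the localized error identity of Lemma~\ref{eq:errors} with the interior-bubble test function, apply the scalings of Lemma~\ref{burbujainterior} together with Cauchy--Schwarz to each term, and control $|u_{h}-\PiK u_{h}|_{1,E}$ by $\Theta_{E}$ via the stability bound \eqref{eq:staba}. The only difference is your preliminary replacement of $\Upsilon_{E}$ by a polynomial approximation $\Upsilon_{E}^{\pi}$ before invoking the bubble estimates; the paper applies Lemma~\ref{burbujainterior} directly to $\Upsilon_{E}$ even though $\vartheta$ is not assumed piecewise polynomial, so your extra step is a harmless, slightly more careful refinement rather than a genuinely different argument.
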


\begin{proof}
Let $\psi_{E}$ be the interior bubble function defined  in Lemma \ref{burbujainterior}. Let us define the function $v := \Upsilon_{E}\psi_{E}$ which vanishes on the boundary of $E$ and also may be extended by zero to the whole domain $\Omega$, implying that $v \in H_{0}^{1}(\O)$. Then, according to Lemma \ref{eq:errors}, for each $E \in \mathcal{T}_{h}$ we have
\begin{multline*}
\CB^{E}(\texttt{e}_{h},\Upsilon_{E}\psi_{E}) = \lambda c^{E}(u,\Upsilon_{E}\psi_{E}) - \lambda_{h}c^{E}(u_h,\Upsilon_{E}\psi_{E}) - a^{E}(u_{h} - \PiK u_h,\Upsilon_{E}\psi_{E}) \\
- b^{E}(u_{h} - \PiK u_h,\Upsilon_{E}\psi_{E}) + \lambda_{h}c^{E}(u_{h} - \Pi^{E}u_h,\Upsilon_{E}\psi_{E}) + \displaystyle{\int_{E} \Upsilon_{E}^{2}\psi_{E}}.
\end{multline*}
Then, we have
\begin{multline*}
\|\Upsilon_{E}\|_{0,E}^{2} \lesssim \displaystyle{\int_{E} \Upsilon_{E}^{2}\psi_{E}} = \CB^{E}(\texttt{e}_{h},\Upsilon_{E}\psi_{E}) - \lambda c^{E}(u,\Upsilon_{E}\psi_{E}) + \lambda_{h}c^{E}(u_h,\Upsilon_{E}\psi_{E}) \\ 
a^{E}(u_{h} - \PiK u_h,\Upsilon_{E}\psi_{E}) + b^{E}(u_{h} - \PiK u_h,\Upsilon_{E}\psi_{E}) - \lambda_{h}c^{E}(u_{h} - \Pi^{E}u_h,\Upsilon_{E}\psi_{E}) \\
 \leq C_{\kappa}h_{E}^{-1}(|u - u_{h}|_{1,E} + |u_{h} - \PiK u_{h}|_{1,E})\|\Upsilon_{E}\|_{0,E} + |\lambda_{h}|\|u_{h} - \Pi^{E}u_{h}\|_{0,E}\|\Upsilon_{E}\|_{0,E} \\
 + C_{\vartheta}(|u - u_{h}|_{1,E} + |u_{h} - \PiK u_{h}|_{1,E})\|\Upsilon_{E}\|_{0,E} + \|\lambda u - \lambda_{h} u_{h}\|_{0,E}\|\Upsilon_{E}\|_{0,E} \\
\lesssim h_{E}^{-1}\|\Upsilon_{E}\|_{0,E}\left(|u - u_{h}|_{1,E} + 2\Theta_{E} + h_{E}(|u - u_{h}|_{1,E} + \|u_{h} - \Pi^{E} u_{h}\|_{0,E}\right. \\ 
\left. + \|\lambda u - \lambda_{h}u_{h}\|_{0,E})\right),
\end{multline*}
where in the last inequality we have used \eqref{eq:bestapprox}. This concludes the proof for the first estimate. Analogous arguments are used for the estimate involving the dual problem.
\end{proof}

The following result gives an estimate for the primal and dual consistency terms $\Theta_{E}$ and $\Theta_{E}^{*}$, respectively.
\begin{lemma}\label{eq:theta}
The following estimates holds
\begin{equation*}
\Theta_{E} \lesssim \|u - u_{h}\|_{1,E} + |u - \PiK u_{h}|_{1,E}, \,\,\text{and}\,\,\,\,
\Theta_{E}^{*} \lesssim \|u^{*} - u_{h}^{*}\|_{1,E} + |u^{*} - \PiK u_{h}^{*}|_{1,E},
\end{equation*}
where the hidden constants are independent of $h_{E}$. 
\end{lemma}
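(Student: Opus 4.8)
The plan is to exploit the algebraic structure of the discrete form $a_h^E$ together with the stability bound \eqref{eq:staba}. By definition $\Theta_E^2 = a_h^E(u_h - \PiK u_h, u_h - \PiK u_h)$, and recall that $a_h^E(w_h,v_h) = a^E(\PiK w_h, \PiK v_h) + S^E(w_h - \PiK w_h, v_h - \PiK v_h)$. The key observation is that the argument $u_h - \PiK u_h$ lies in $\ker(\PiK)$: since $\PiK u_h \in \mathbb{P}_1(E)$ and $\PiK$ acts as the identity on $\mathbb{P}_1(E)$, we have $\PiK(u_h - \PiK u_h) = \PiK u_h - \PiK u_h = 0$. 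Consequently the consistency (polynomial) part of $a_h^E$ evaluated at this argument vanishes, and only the stabilization survives, giving $\Theta_E^2 = S^E(u_h - \PiK u_h, u_h - \PiK u_h)$.

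Next I would apply the upper bound in \eqref{eq:staba}, which is valid precisely because $u_h - \PiK u_h \in \VK \cap \ker(\PiK)$, to obtain $S^E(u_h - \PiK u_h, u_h - \PiK u_h) \le a_1 a^E(u_h - \PiK u_h, u_h - \PiK u_h)$. Since $a^E(w,w) = \int_E \kappa(\mathbf{x}) \lvert\nabla w\rvert^2 \le \|\kappa\|_{\infty,E}\, \lvert w\rvert_{1,E}^2$, this yields $\Theta_E \lesssim \lvert u_h - \PiK u_h\rvert_{1,E}$, with the hidden constant depending only on $a_1$ and $\kappa$.

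Finally I would insert the exact solution $u$ via the triangle inequality, $\lvert u_h - \PiK u_h\rvert_{1,E} \le \lvert u - u_h\rvert_{1,E} + \lvert u - \PiK u_h\rvert_{1,E} \le \|u - u_h\|_{1,E} + \lvert u - \PiK u_h\rvert_{1,E}$, which gives the stated bound. The dual estimate follows identically, replacing $u_h$ by $\overline{u_h^*}$ and $u$ by $u^*$ and noting that conjugation does not affect the $H^1$-seminorm. There is no serious obstacle here; the only point requiring care, and the step I would flag as essential, is the verification that $u_h - \PiK u_h$ belongs to $\ker(\PiK)$, since it is exactly this fact that collapses $a_h^E$ onto its stabilization term and thereby makes \eqref{eq:staba} applicable.
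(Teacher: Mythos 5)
Your proposal is correct and follows essentially the same route as the paper: both reduce $\Theta_E^2$ to $S^E(u_h-\PiK u_h,\,u_h-\PiK u_h)$, bound it via the upper inequality in \eqref{eq:staba} (the ``boundedness of $S^E$''), and finish with the triangle inequality through $u$. You simply spell out the intermediate justifications (the kernel membership $u_h-\PiK u_h\in \VK\cap\ker(\PiK)$ and the continuity of $a^E$) that the paper leaves implicit.
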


\begin{proof}
Observe that $\Theta_{E}^{2} = S^{E}(u_{h} - \PiK u_{h},u_{h} - \PiK u_{h})$. Then, using triangle inequality and the boundedness of $S^{E}(\cdot,\cdot)$, we have
\begin{equation*}
\Theta_{E}^{2} \lesssim |u_{h} - \PiK u_{h}|_{1,E}^{2} \leq (\|u - u_{h}\|_{1,E} + |u - \PiK u_{h}|_{1,E})^{2}.
\end{equation*}
The same arguments are used for the second estimate. This concludes the proof.
\end{proof}

Now we prove an estimate for the jumps terms $\|J_{\ell}\|_{0,\ell}$ and $\|J_{\ell}^{*}\|_{0,\ell}$.
\begin{lemma}\label{eq:residualterm}
The following estimates holds
\begin{align*}
h_{E'}^{1/2}\|J_{\ell}\|_{0,\ell} \lesssim \displaystyle{\sum_{E' \in \omega_{\ell}} \left(\|\texttt{e}_{h}\|_{1,E'} + |u - \Pi^{\nabla, E'}u_{h}|_{1,E'} + h_{E'}\|u_{h} - \Pi^{E'}u_{h}\|_{0,E'}\right.} \\
+ \left.h_{E'}\|\lambda u - \lambda_{h}u_{h}\|_{0,E'}\right), \\
h_{E'}^{1/2}\|J_{\ell}^{*}\|_{0,\ell} \lesssim \displaystyle{\sum_{E' \in \omega_{\ell}} \left(\|\texttt{e}_{h}^{*}\|_{1,E'} + |u^{*} - \Pi^{\nabla, E'}u_{h}^{*}|_{1,E'} + h_{E'}\|u_{h}^{*} - \Pi^{E'}u_{h}^{*}\|_{0,E'}\right.} \\
+ \left.h_{E'}\|\lambda u^{*} - \lambda_{h}u_{h}^{*}\|_{0,E'}\right),
\end{align*}
\end{lemma}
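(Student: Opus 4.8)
The plan is to adapt the standard edge-bubble-function technique to the primal edge residual $J_\ell$; the dual estimate then follows by the very same argument applied to the dual error identity in Lemma~\ref{eq:errors}. Fix an interior edge $\ell\in\CE_{\O}$ (on boundary edges $J_\ell=0$ and there is nothing to prove) and let $\omega_\ell$ be the patch consisting of the (at most two) elements $E'\in\CT_h$ sharing $\ell$. Let $\psi_\ell$ be the edge bubble function of Lemma~\ref{burbuja}, and, after extending $J_\ell$ polynomially to $\omega_\ell$ as in that lemma, set $v:=J_\ell\psi_\ell$. Since $\psi_\ell$ vanishes on $\partial\omega_\ell$, the function $v$ extends by zero to an element of $V=H_0^1(\O,\mathbb{C})$ that is supported on $\omega_\ell$ and vanishes on every edge of the patch except $\ell$.

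First I would test the primal error identity of Lemma~\ref{eq:errors} with this $v$, invoking the Remark to discard $\nabla\cdot(\kappa(\mathbf{x})\nabla\PiK u_h)$. Because $v$ is supported on $\omega_\ell$ and vanishes on all edges but $\ell$, both the volumetric and the edge sums collapse onto the patch and onto the single edge $\ell$, giving
\begin{multline*}
\int_\ell J_\ell^2\psi_\ell = \sum_{E'\in\omega_\ell}\Bigl[a^{E'}(\texttt{e}_h,v)+b^{E'}(\texttt{e}_h,v)\Bigr] - c(\lambda u-\lambda_h u_h,v) \\
+ \sum_{E'\in\omega_\ell}\Bigl[a^{E'}(u_h-\Pi^{\nabla,E'}u_h,v)+b^{E'}(u_h-\Pi^{\nabla,E'}u_h,v)-\lambda_h c^{E'}(u_h-\Pi^{E'}u_h,v)\Bigr] \\
- \sum_{E'\in\omega_\ell}\int_{E'}\Upsilon_{E'}\,v,
\end{multline*}
up to a harmless constant from the double counting of $\ell$. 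The lower bound $\|J_\ell\|_{0,\ell}^2\lesssim\int_\ell J_\ell^2\psi_\ell$ of Lemma~\ref{burbuja} then turns this identity into an upper bound for $\|J_\ell\|_{0,\ell}^2$.

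Next I would bound each term by Cauchy--Schwarz together with the continuity of $a^{E'}(\cdot,\cdot)$ and $b^{E'}(\cdot,\cdot)$, and eliminate $v$ through the bubble scalings $\|v\|_{0,E'}\lesssim h_{E'}^{1/2}\|J_\ell\|_{0,\ell}$ and $|v|_{1,E'}\lesssim h_{E'}^{-1/2}\|J_\ell\|_{0,\ell}$ of Lemma~\ref{burbuja}. The diffusion pairings $a^{E'}(\texttt{e}_h,v)$ and $a^{E'}(u_h-\Pi^{\nabla,E'}u_h,v)$ involve $|v|_{1,E'}$ and thus carry the weight $h_{E'}^{-1/2}$, whereas the convection, mass, and volumetric pairings involve $\|v\|_{0,E'}$ and carry $h_{E'}^{1/2}$. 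Dividing by $\|J_\ell\|_{0,\ell}$, multiplying by $h_{E'}^{1/2}$, and using that all $h_{E'}$ in the patch are comparable by assumption~\textbf{A2}, I would arrive at
\begin{multline*}
h_{E'}^{1/2}\|J_\ell\|_{0,\ell}\lesssim\sum_{E'\in\omega_\ell}\Bigl(\|\texttt{e}_h\|_{1,E'}+|u_h-\Pi^{\nabla,E'}u_h|_{1,E'} \\
+h_{E'}\|u_h-\Pi^{E'}u_h\|_{0,E'}+h_{E'}\|\Upsilon_{E'}\|_{0,E'}+h_{E'}\|\lambda u-\lambda_h u_h\|_{0,E'}\Bigr).
\end{multline*}

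Finally I would recast the right-hand side into the stated form. The projection seminorm is split by the triangle inequality, $|u_h-\Pi^{\nabla,E'}u_h|_{1,E'}\le|\texttt{e}_h|_{1,E'}+|u-\Pi^{\nabla,E'}u_h|_{1,E'}$, and the factor $h_{E'}\|\Upsilon_{E'}\|_{0,E'}$ is absorbed through the efficiency bound of Lemma~\ref{eq:volumetrico}, which controls $\|\Upsilon_{E'}\|_{0,E'}$ by $h_{E'}^{-1}\bigl(|u-u_h|_{1,E'}+\Theta_{E'}+h_{E'}(\cdots)\bigr)$; the remaining $\Theta_{E'}$ is then reabsorbed by Lemma~\ref{eq:theta}, which bounds it by exactly $\|\texttt{e}_h\|_{1,E'}+|u-\Pi^{\nabla,E'}u_h|_{1,E'}$. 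The main obstacle is bookkeeping rather than conceptual: one must carefully track the competing powers $h_{E'}^{\pm1/2}$ produced by the diffusion term against those from the convection and mass terms so that the net weight is precisely $h_{E'}^{1/2}$, and one must ensure the volumetric residual $\Upsilon_{E'}$ is genuinely reabsorbed via Lemma~\ref{eq:volumetrico} rather than left as a standalone quantity. The dual estimate is obtained verbatim from the dual identity in Lemma~\ref{eq:errors}, replacing $b^{E'}(\cdot,\cdot)$ by $-b^{E'}(\cdot,\cdot)$ and the primal objects by their starred counterparts.
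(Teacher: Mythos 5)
Your proposal is correct and follows essentially the same route as the paper: test the primal error identity of Lemma~\ref{eq:errors} with $v=J_\ell\psi_\ell$ localized to the patch $\omega_\ell$, invoke the lower bound and the scalings of Lemma~\ref{burbuja}, bound the diffusion terms through $|v|_{1,E'}\lesssim h_{E'}^{-1/2}\|J_\ell\|_{0,\ell}$ and the convection, mass, and volumetric terms through $\|v\|_{0,E'}\lesssim h_{E'}^{1/2}\|J_\ell\|_{0,\ell}$, and then reabsorb $\Upsilon_{E'}$ and $\Theta_{E'}$ via Lemmas~\ref{eq:volumetrico} and~\ref{eq:theta}. No gaps; the dual case is handled identically, as in the paper.
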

where $\omega_{\ell} := \{E' \in \mathcal{T}_{h} : \ell \in \mathcal{E}_{E'}\}$ and the hidden constants depend on $\kappa, \vartheta$ but not on $h_{E'}$.
\begin{proof}
Let $\ell \in \mathcal{E}_{E'} \cap \mathcal{E}_{\O}$. Let us define $v := J_{\ell}\psi_{\ell}$, where $\psi_{\ell}$ is the edge bubble function satisfying Lemma \ref{burbuja}. Observe that $v$ may be extended by zero to the whole domain $\O$ and, just for simplicity, we denote  this  extension by $v$. We remark that $v \in V$. Invoking  Lemma \ref{eq:errors}, we have
\begin{multline*}
\CB(\texttt{e}_{h},J_{\ell}\psi_{\ell}) = \lambda c(u,J_{\ell}\psi_{\ell}) - \lambda_{h}c(u_h,J_{\ell}\psi_{\ell}) - \displaystyle{\sum_{E' \in \omega_{\ell}} \left(a^{E'}(u_{h} - \Pi^{\nabla, E'} u_h,J_{\ell}\psi_{\ell})\right.} \\ 
\left. + b^{E'}(u_{h} - \Pi^{\nabla, E'} u_h,J_{\ell}\psi_{\ell}) - \lambda_{h}c^{E'}(u_{h} - \Pi^{E'}u_h,J_{\ell}\psi_{\ell})\right) \\ 
+ \displaystyle{\sum_{E' \in \omega_{\ell}}  \left(\int_{E'} \Upsilon_{E'}J_{\ell}\psi_{\ell} + \int_{\ell} J_{\ell}^{2}\psi_{\ell}\right)}.
\end{multline*}
Then, applying Lemma \ref{burbuja}, we have
\begin{multline}\label{eq:edgeres}
\|J_{\ell}\|_{0,\ell}^{2} \lesssim \displaystyle{\int_{\ell} J_{\ell}^{2}\psi} = \CB(\texttt{e}_{h},J_{\ell}\psi_{\ell}) - c(\lambda u - \lambda_{h}u_{h},J_{\ell}\psi_{\ell}) - \displaystyle{\sum_{E' \in \omega_{\ell}}  \int_{E'} \Upsilon_{E'}J_{\ell}\psi_{\ell}} \\
+ \displaystyle{\sum_{E' \in \omega_{\ell}} \left(a^{E'}(u_{h} - \Pi^{\nabla, E'} u_h,J_{\ell}\psi_{\ell}) + b^{E'}(u_{h} - \Pi^{\nabla, E'} u_h,J_{\ell}\psi_{\ell})\right.} \\ 
\left. - \lambda_{h}c^{E'}(u_{h} - \Pi^{E'}u_h,J_{\ell}\psi_{\ell})\right) \\
\leq C_{\kappa}\displaystyle{\sum_{E' \in \omega_{\ell}} \left(|\texttt{e}_{h}|_{1,E'} + |u_{h} - \Pi^{\nabla, E'}u_{h}|_{1,E'}\right)|J_{\ell}\psi_{\ell}|_{1,E'}} + \displaystyle{\sum_{E' \in \omega_{\ell}} \|\Upsilon_{E'}\|_{0,E'}\|J_{\ell}\psi_{\ell}\|_{0,E'}} \\
+ C_{\vartheta}\displaystyle{\sum_{E' \in \omega_{\ell}} \left(|\texttt{e}_{h}|_{1,E'} + |u_{h} - \Pi^{\nabla, E'}u_{h}|_{1,E'}\right)\|J_{\ell}\psi_{\ell}\|_{0,E'}} \\ 
+ \displaystyle{\sum_{E' \in \omega_{\ell}} \|u_{h} - \Pi^{E}u_{h}\|_{0,E'}\|J_{\ell}\psi_{\ell}\|_{0,E'}} + \displaystyle{\sum_{E' \in \omega_{\ell}} \|\lambda u - \lambda_{h}u_{h}\|_{0,E'}\|J_{\ell}\psi_{\ell}\|_{0,E'}}.
\end{multline}
Now we have the following set of inequalities:
\begin{align*}
\left(|\texttt{e}_{h}|_{1,E'} + |u_{h} - \Pi^{\nabla, E'}u_{h}|_{1,E'}\right)|J_{\ell}\psi_{\ell}|_{1,E'} \lesssim h_{E'}^{-1/2}\left(\|\texttt{e}_{h}\|_{1,E'} + \Theta_{E'}\right)\|J_{\ell}\|_{0,\ell}; \\
\left(|\texttt{e}_{h}|_{1,E'} + |u_{h} - \Pi^{\nabla, E'}u_{h}|_{1,E'}\right)\|J_{\ell}\psi_{\ell}\|_{0,E'} \lesssim h_{E'}^{1/2}\left(\|\texttt{e}_{h}\|_{1,E'} + \Theta_{E'}\right)\|J_{\ell}\|_{0,\ell}; \\
\|u_{h} - \Pi^{E}u_{h}\|_{0,E'}\|J_{\ell}\psi_{\ell}\|_{0,E'} \lesssim h_{E}^{1/2}\|u_{h} - \Pi^{E}u_{h}\|_{0,E'}\|J_{\ell}\|_{0,\ell}; \\
\|\lambda u - \lambda_{h}u_{h}\|_{0,E'}\|J_{\ell}\psi_{\ell}\|_{0,E'} \lesssim h_{E}^{1/2}\|\lambda u - \lambda_{h}u_{h}\|_{0,E'}\|J_{\ell}\|_{0,\ell}; \\
\|\Upsilon_{E'}\|_{0,E'}\|J_{\ell}\psi_{\ell}\|_{0,E'} \lesssim h_{E'}^{-1/2}\left(\|\texttt{e}_{h}\|_{1,E} + \Theta_{E'}\right) +  h_{E'}^{1/2}\left(\|\texttt{e}_{h}\|_{1,E'} + \Theta_{E'}\right. \\
\left. + \|\lambda u - \lambda_{h}u_{h}\|_{0,E'} + \|u_{h} - \Pi^{E}u_{h}\|_{0,E'}\right),
\end{align*}
where in the last inequality we have employed Lemma \ref{eq:volumetrico}. Hence, combining \eqref{eq:edgeres} with the above estimates and using Lemma \ref{eq:theta}, we conclude the proof for the first estimate. Similar arguments can be used to prove the second estimate.
\end{proof}

Now, we prove estimates for the local error indicators $\eta_{E}$ and $\eta_{E}^{*}$. To do this task is necessary to have  estimates for the local volumetric terms, consistency terms and the edge residuals. All estimates has been obtained in the previous results.

\begin{corollary}[Local efficiency]
The following estimate for the local error indicators $\eta_{E}$ and $\eta_{E}^{*}$ holds
\begin{multline*}
\eta_{E}^{2} \lesssim \displaystyle{\sum_{E' \in \omega_{E}} \left(\|\texttt{e}_{h}\|_{1,E'}^{2} + |u - \Pi^{\nabla, E'}u_{h}|_{1,E'}^{2} + \|u_{h} - \Pi^{E'}u_{h}\|_{0,E'}^{2}\right.} \\
\left.+ h_{E'}^{2}\|\lambda u - \lambda_{h}u_{h}\|_{0,E'}^{2}\right), \\
\eta_{E}^{*2} \lesssim \displaystyle{\sum_{E' \in \omega_{E}} \left(\|\texttt{e}_{h}^{*}\|_{1,E'}^{2} + |u^{*} - \Pi^{\nabla, E'}u_{h}^{*}|_{1,E'}^{2} + \|u_{h}^{*} - \Pi^{E'}u_{h}^{*}\|_{0,E'}^{2}\right.} \\
\left.+ h_{E'}^{2}\|\lambda u^{*} - \lambda_{h}u_{h}^{*}\|_{0,E'}^{2}\right),
\end{multline*}
where $\omega_{E} := \{E' \in \mathcal{T}_{h} : E' \; \text{and} \; E \; \text{share an edge}\}$.
\end{corollary}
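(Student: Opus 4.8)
The plan is to treat the three constituents of $\eta_E^2=\Theta_E^2+R_E^2+\sum_{\ell\in\mathcal{E}_E}h_E\|J_\ell\|_{0,\ell}^2$ separately, invoking the three efficiency estimates just proved, and then to collect every contribution over the edge-patch $\omega_E$. For the consistency term I would apply Lemma~\ref{eq:theta} directly, which yields $\Theta_E^2\lesssim\|u-u_h\|_{1,E}^2+|u-\Pi^{\nabla,E}u_h|_{1,E}^2$; both summands already appear on the right-hand side with $E'=E\in\omega_E$.

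Next, for the volumetric term $R_E^2=h_E^2\|\Upsilon_E\|_{0,E}^2$ I would substitute the bound of Lemma~\ref{eq:volumetrico}. Since that lemma carries a prefactor $h_E^{-1}$, the weight $h_E^2$ cancels the resulting $h_E^{-2}$, and after using $(a+b+c+d)^2\lesssim a^2+b^2+c^2+d^2$ one is left with $|u-u_h|_{1,E}^2$, $\Theta_E^2$, $h_E^2\|u_h-\Pi^{E}u_h\|_{0,E}^2$ and $h_E^2\|\lambda u-\lambda_h u_h\|_{0,E}^2$. Re-bounding the surviving $\Theta_E^2$ once more through Lemma~\ref{eq:theta}, each term now matches the target terms for $E'=E$.

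For the jump contributions I would invoke Lemma~\ref{eq:residualterm}, which controls $h_{E'}^{1/2}\|J_\ell\|_{0,\ell}$ by a sum over the (at most two) elements $E'\in\omega_\ell$ of exactly the desired quantities; squaring and using that $\#\omega_\ell$ is uniformly bounded gives $h_{E'}\|J_\ell\|_{0,\ell}^2$ bounded by $\sum_{E'\in\omega_\ell}(\cdots)$. The definition of $\eta_E$ weights the jump by $h_E$ rather than $h_{E'}$; this is precisely where assumptions \textbf{A1} and \textbf{A2} enter, since shape-regularity gives the equivalence $h_E\simeq h_{E'}$ for any two elements sharing the edge $\ell$, so the two weights are interchangeable up to a constant. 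Summing over the uniformly bounded set of edges $\ell\in\mathcal{E}_E$ and noting that every contributing neighbour $E'$ lies in $\omega_E$, all edge terms are absorbed into the right-hand side, completing the estimate for $\eta_E^2$.

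The analytic content sits entirely in Lemmas~\ref{eq:theta}, \ref{eq:volumetrico} and~\ref{eq:residualterm}, so the corollary is essentially a bookkeeping step; the only point demanding attention is the reconciliation of the $h_E$- and $h_{E'}$-weightings through the mesh-regularity equivalence $h_E\simeq h_{E'}$, together with the observation that squaring finite sums costs only a constant because the valence of each element and the cardinality of each $\omega_\ell$ are uniformly bounded under \textbf{A1}--\textbf{A2}. The dual indicator $\eta_E^{*2}$ follows verbatim from the starred versions of the same three lemmas.
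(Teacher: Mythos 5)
Your proposal is correct and follows essentially the same route as the paper, whose proof of this corollary is a one-line appeal to Lemmas \ref{eq:volumetrico}, \ref{eq:theta} and \ref{eq:residualterm}; you simply spell out the bookkeeping (term-by-term substitution, squaring of finite sums, and the $h_E\simeq h_{E'}$ equivalence from mesh regularity) that the paper leaves implicit.
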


\begin{proof}
The results follows directly from Lemmas \ref{eq:volumetrico} - \ref{eq:residualterm}.
\end{proof}

\section{Numerical experiments}
\label{sec:numerics}
In the following section we report numerical evidence to support our theoretical results. Particulalry
we present the results related to the a posteriori error estimates, in order to assess the bevahior of the a posteriori estimators defined in \eqref{eq:errorestimator}, since as we will observe on the forthcominng results, the a priori error is analyzed with the  uniform refinement that we compare with the adaptive one. With this aim, we have implemented in a MATLAB code a lowest order VEM scheme on arbitrary polygonal meshes and   the mesh refinement algorithm described in \cite{MR3342219},  which consists in splitting each element of the mesh into $n$ quadrilaterals ($n$ being the number of edges of the polygon) by connecting the barycenter of the element with the midpoint of each edge, which will be named as  \textbf{Adaptive VEM}. Notice that although this process is initiated with a mesh of triangles, the successively created meshes will contain other kind of convex polygons, as it can be seen in Figure \ref{FIG:AdaptativeT}. The schemes are based on the strategy of refining those elements $\E\in\CT_h$ that satisfy
 $$\boldsymbol{\eta}_{\E}\geq 0.5 \max_{{\E'\in\CT_{h}}}\{\boldsymbol{\eta}_{\E'}\}.$$
\begin{figure}[h!]
\begin{center}
\begin{minipage}{6.4cm}
\centering\includegraphics[height=5.7cm, width=4.5cm, angle =90]{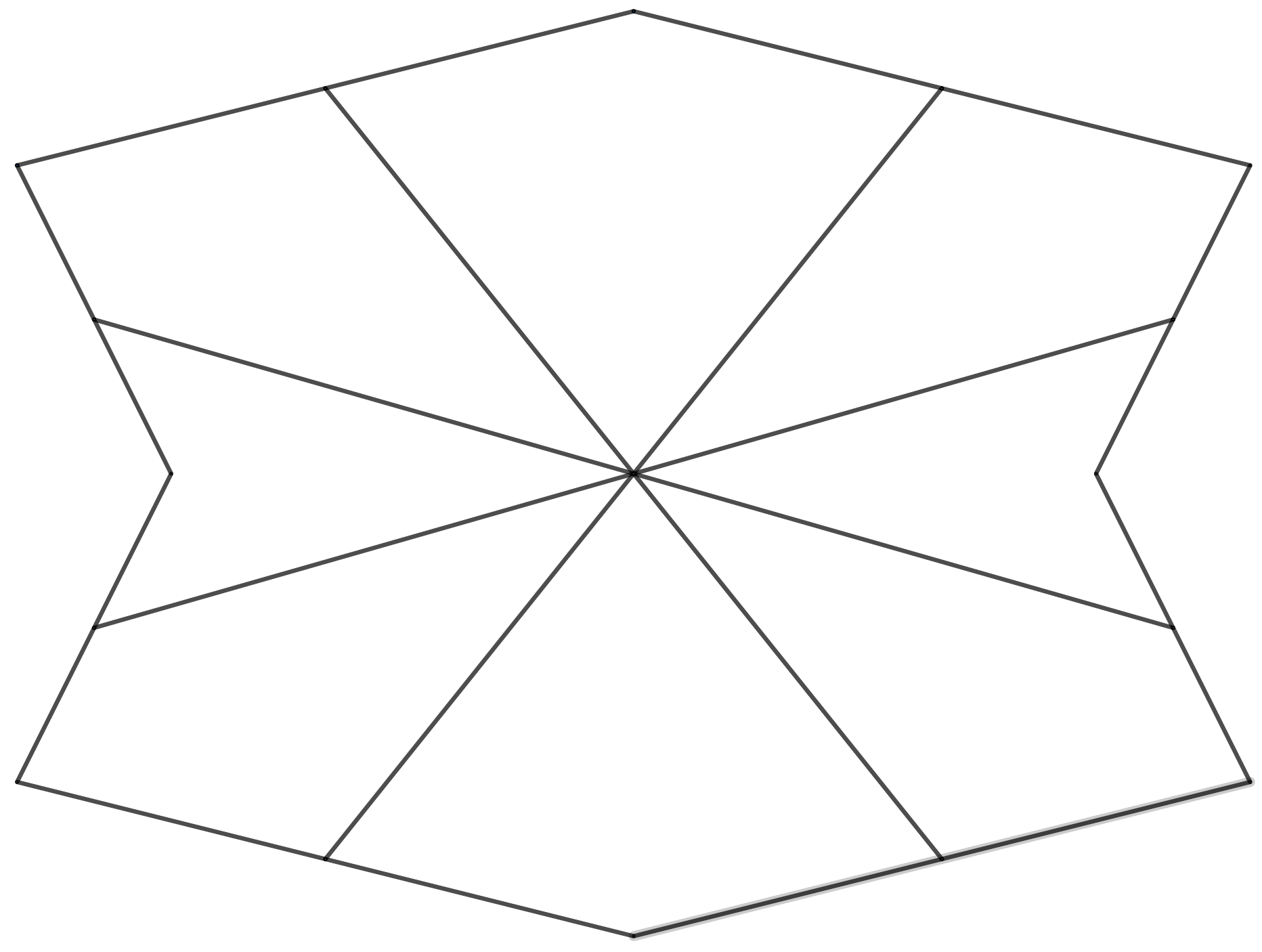}
\end{minipage}
\caption{\label{FIG:AdaptativeT} Example of refined elements for the VEM strategy.}
\end{center}
\end{figure}

We have tested the method by using different families of meshes (see Figure \ref{fig:mesh}):
\begin{itemize}
\item tria: triangular meshes;
\item quad: squares meshes;
\item hexa: structured hexagonal meshes made of convex hexagons; 
\item voro: non-structured Voronoi meshes.
\end{itemize}
\begin{figure}[h!]
\begin{center}
\begin{minipage}{5.2cm}
\centering\includegraphics[height=5.1cm, width=5.1cm]{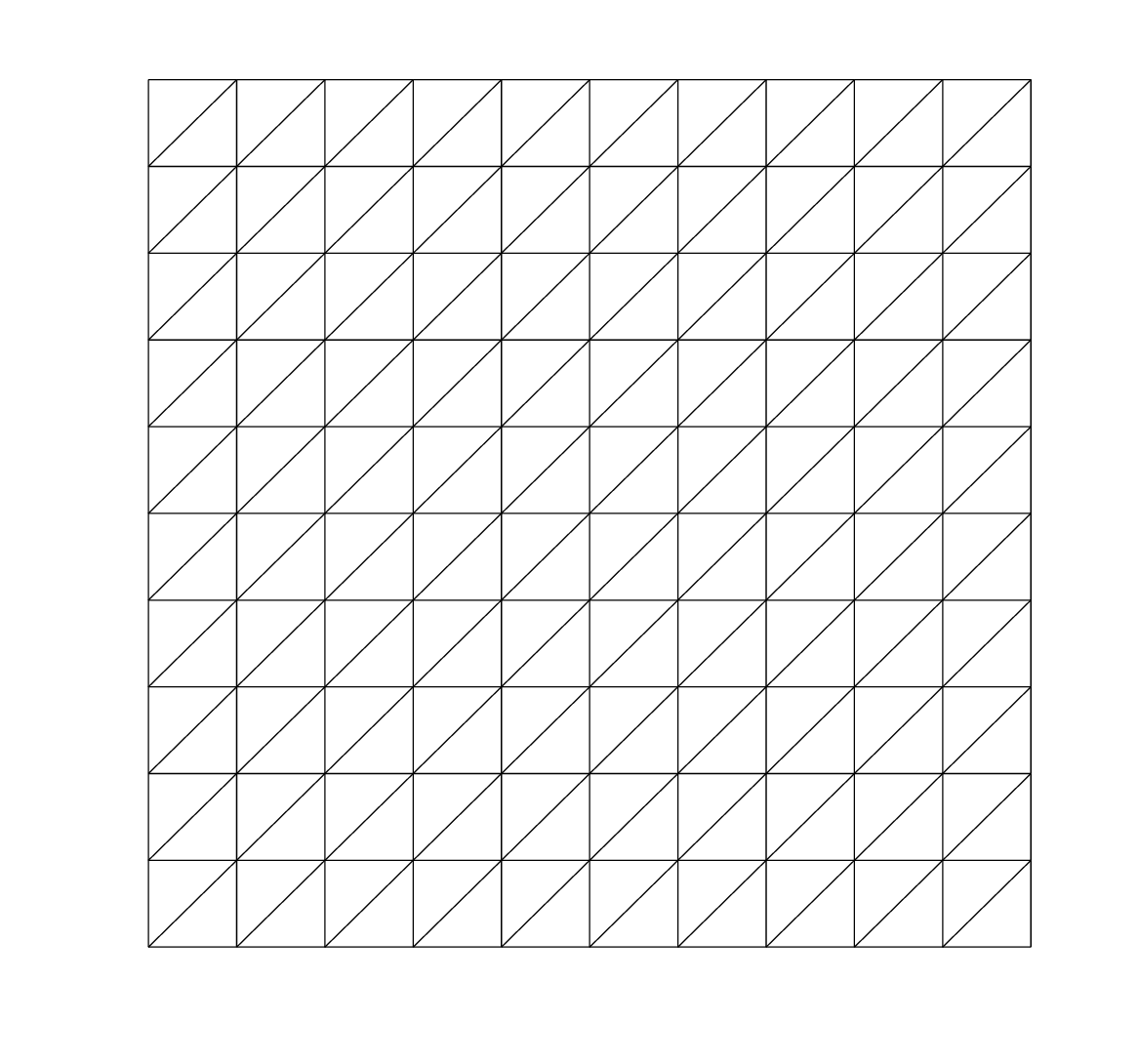}
\end{minipage}
\begin{minipage}{5.2cm}
\centering\includegraphics[height=5.1cm, width=5.1cm]{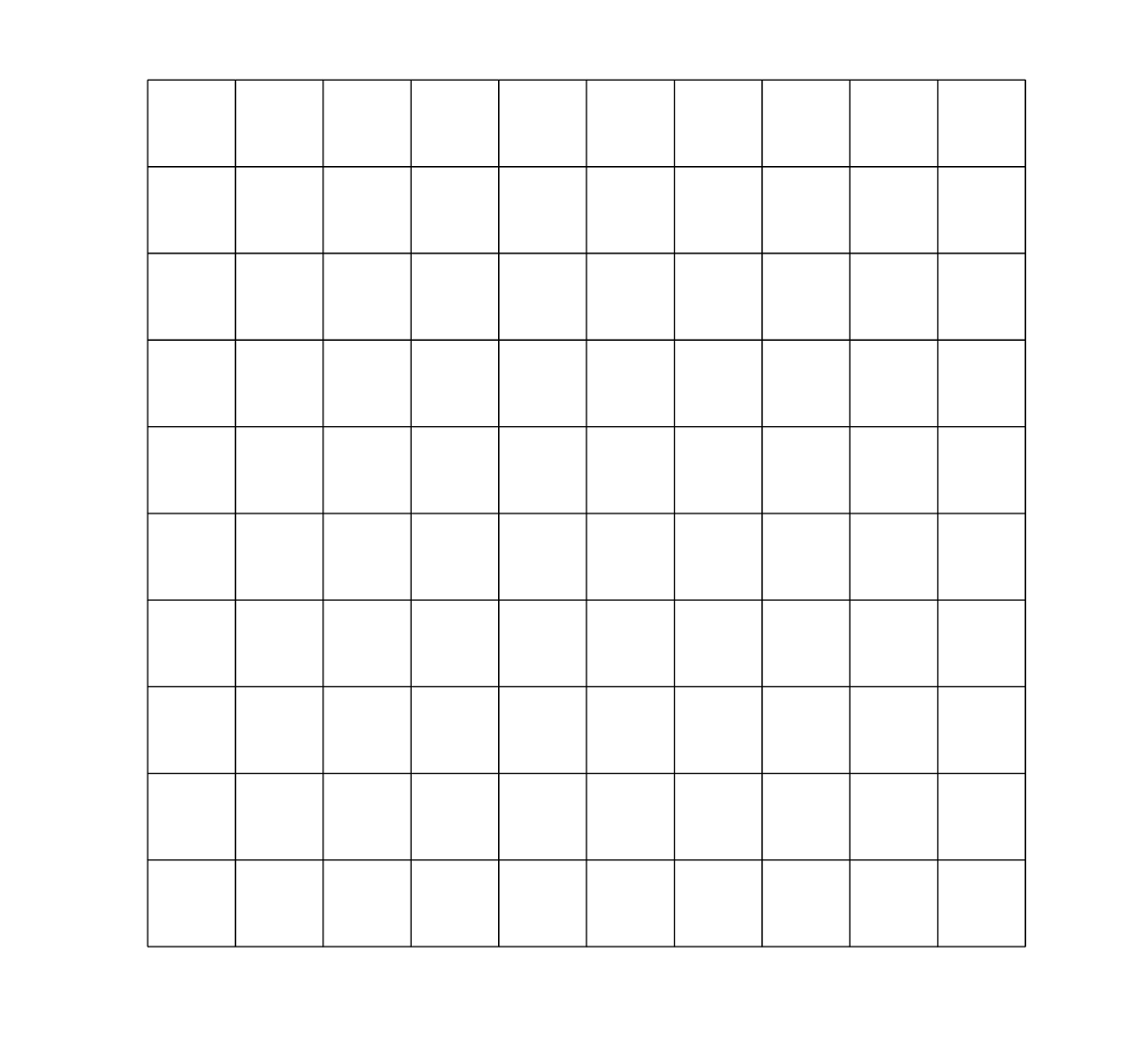}
\end{minipage}\\
\begin{minipage}{5.2cm}
\centering\includegraphics[height=5.1cm, width=5.1cm]{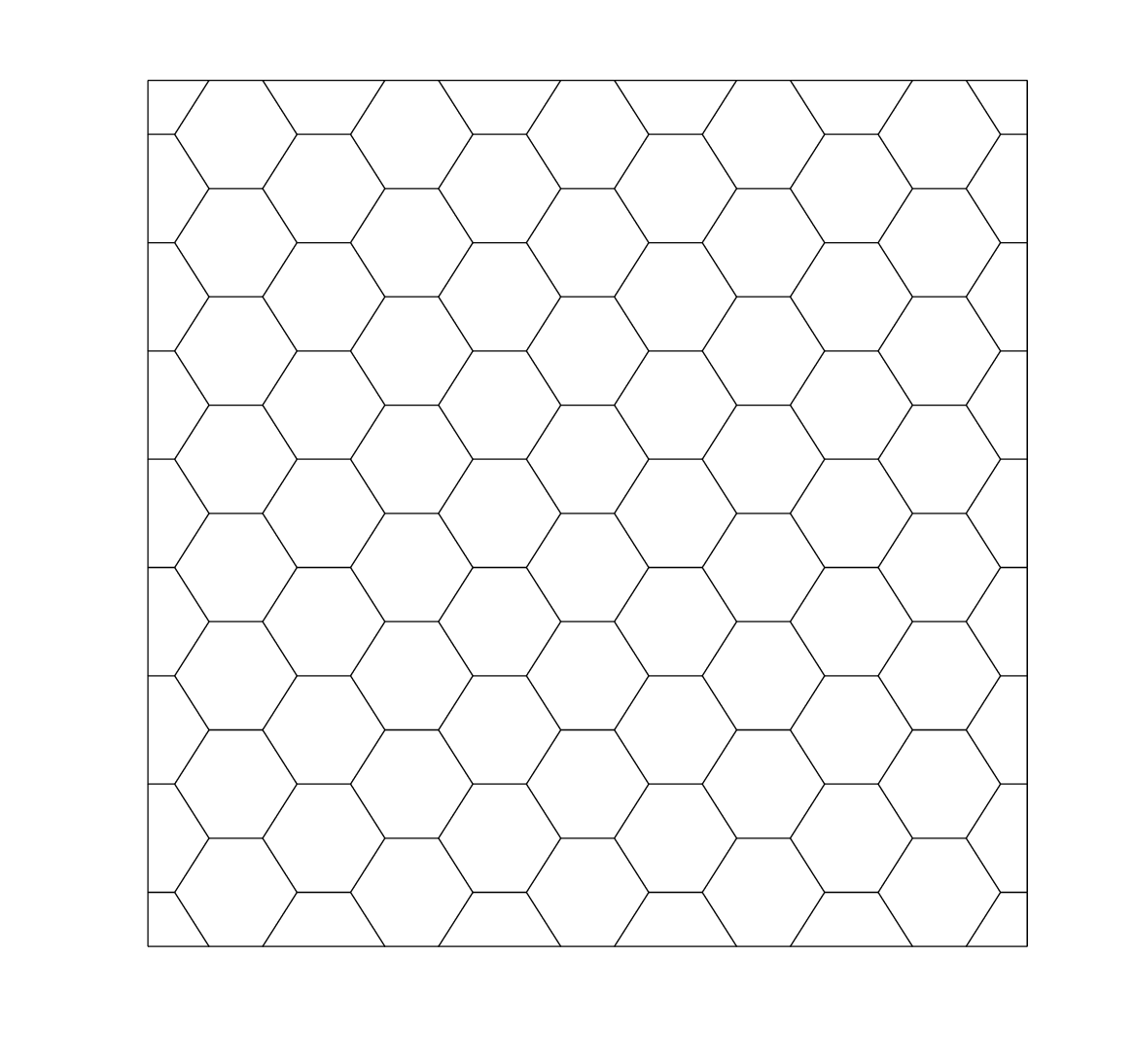}
\end{minipage}
\begin{minipage}{5.2cm}
\centering\includegraphics[height=5.1cm, width=5.1cm]{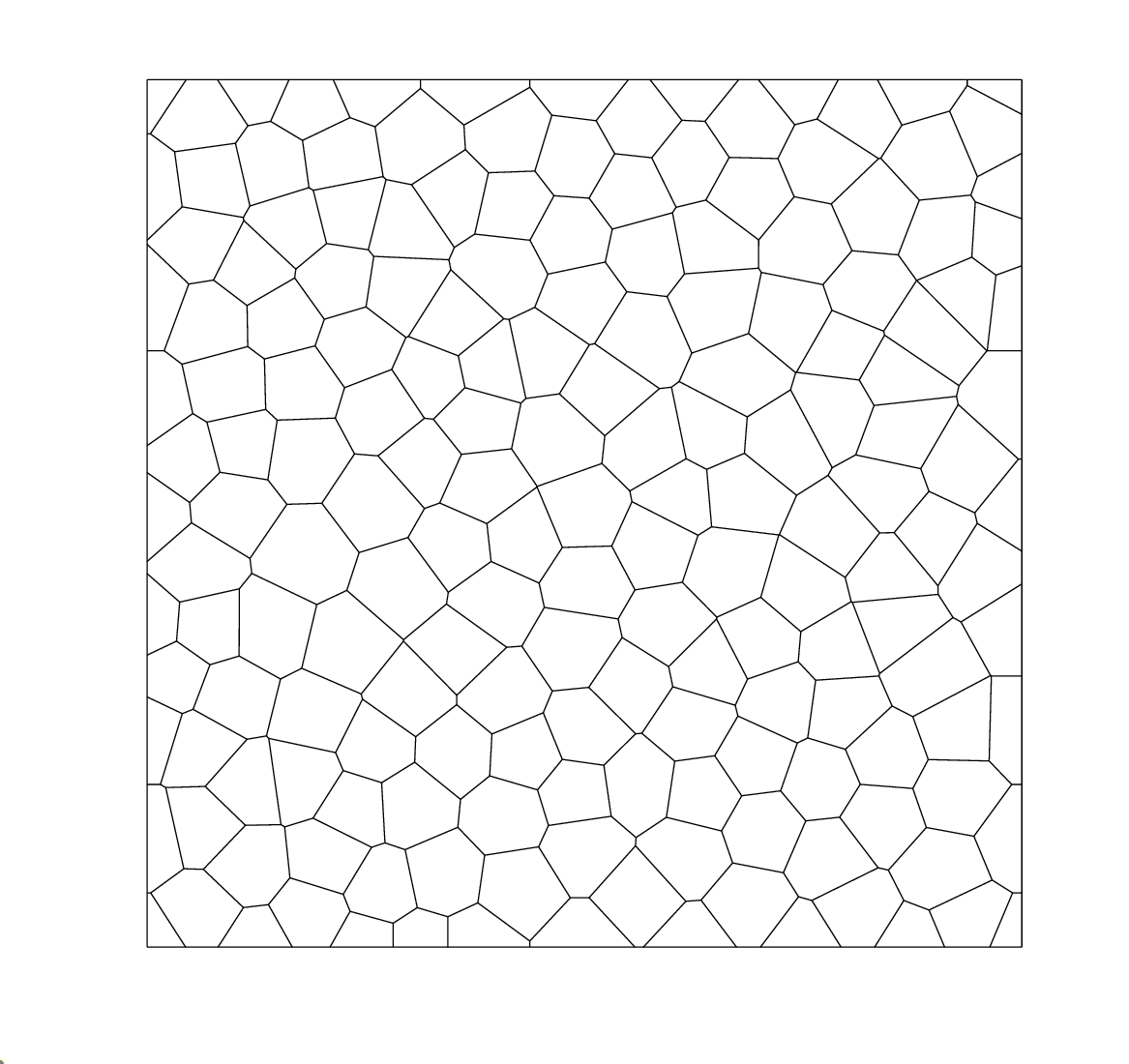}
\end{minipage}
\caption{\label{fig:mesh} Sample meshes: tria (top left), quad (top right), hexa (bottom  left), voro (bottom right).}
\end{center}
\end{figure}

\subsection{Test 1: L-shaped domain}
For the first test, we will consider the non-convex domain $\Omega:=(-1,1)\times(-1,1) \setminus [0, 1]\times [-1, 0]$, which is a L-shaped domain, with boundary condition $u = 0$. Here $\kappa(\mathbf{x})=1$ and $\vartheta(\mathbf{x})=(3,0)$. 
We will use as approximation of the first lowest eigenvalue $\lambda_1=|\vartheta(\mathbf{x})|+9.6397238$ (see \cite{MR3133493} for more details). In Figures \ref{fig:adaptiveVEMH} and \ref{fig:adaptiveVEMV} we present the adaptively refined meshes obtained with VEM procedure for different initial meshes.


\begin{figure}[h!]
\begin{center}
\caption{\label{fig:adaptiveVEMH} Test 1: Adaptively refined meshes obtained with VEM scheme at refinement steps 0, 1 and 8 initiated with an hexagonal mesh (Adaptive VEMH).}
\begin{minipage}{4.2cm}
\centering\includegraphics[height=4.1cm, width=4.1cm]{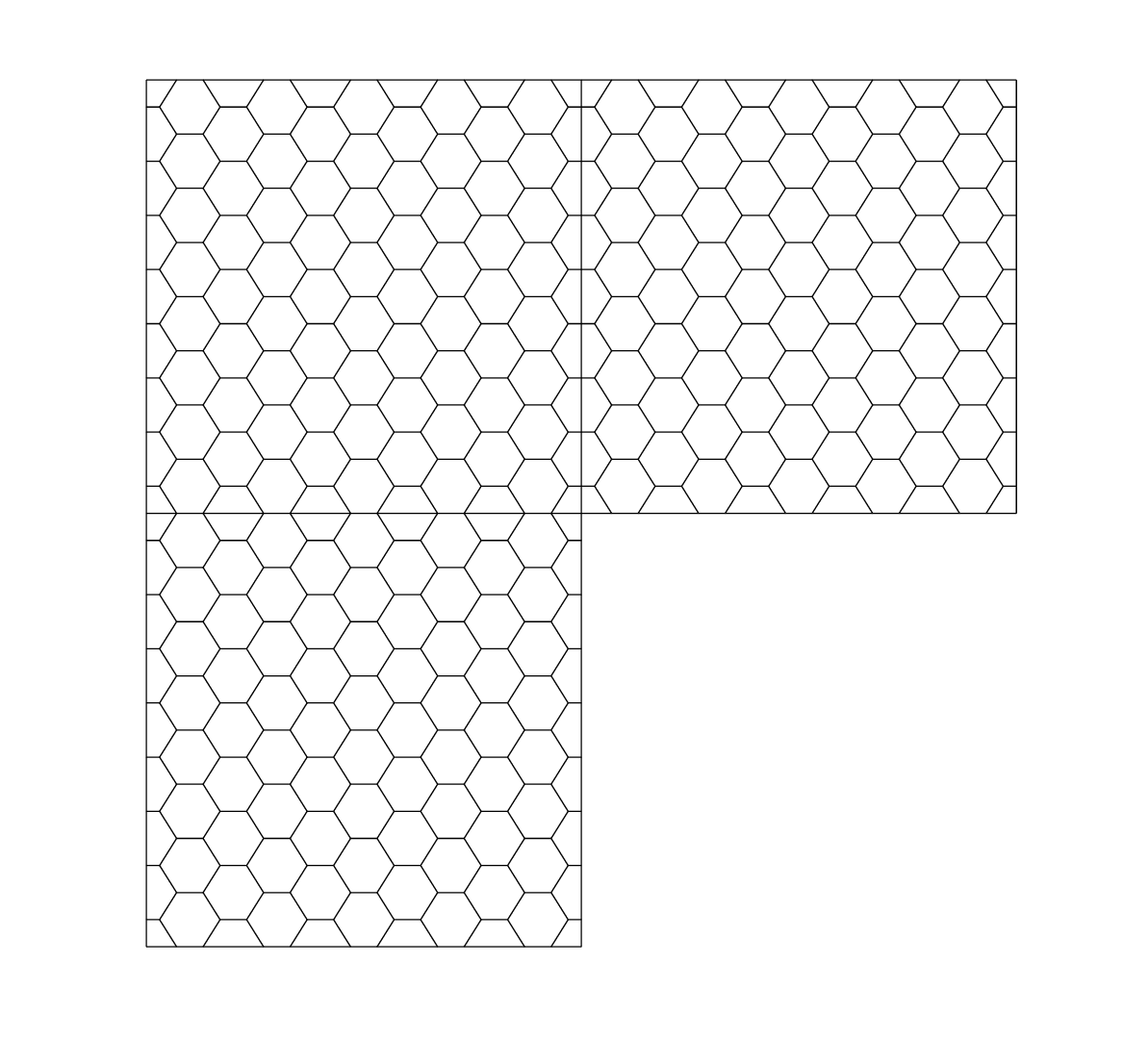}
\end{minipage}
\begin{minipage}{4.2cm}
\centering\includegraphics[height=4.1cm, width=4.1cm]{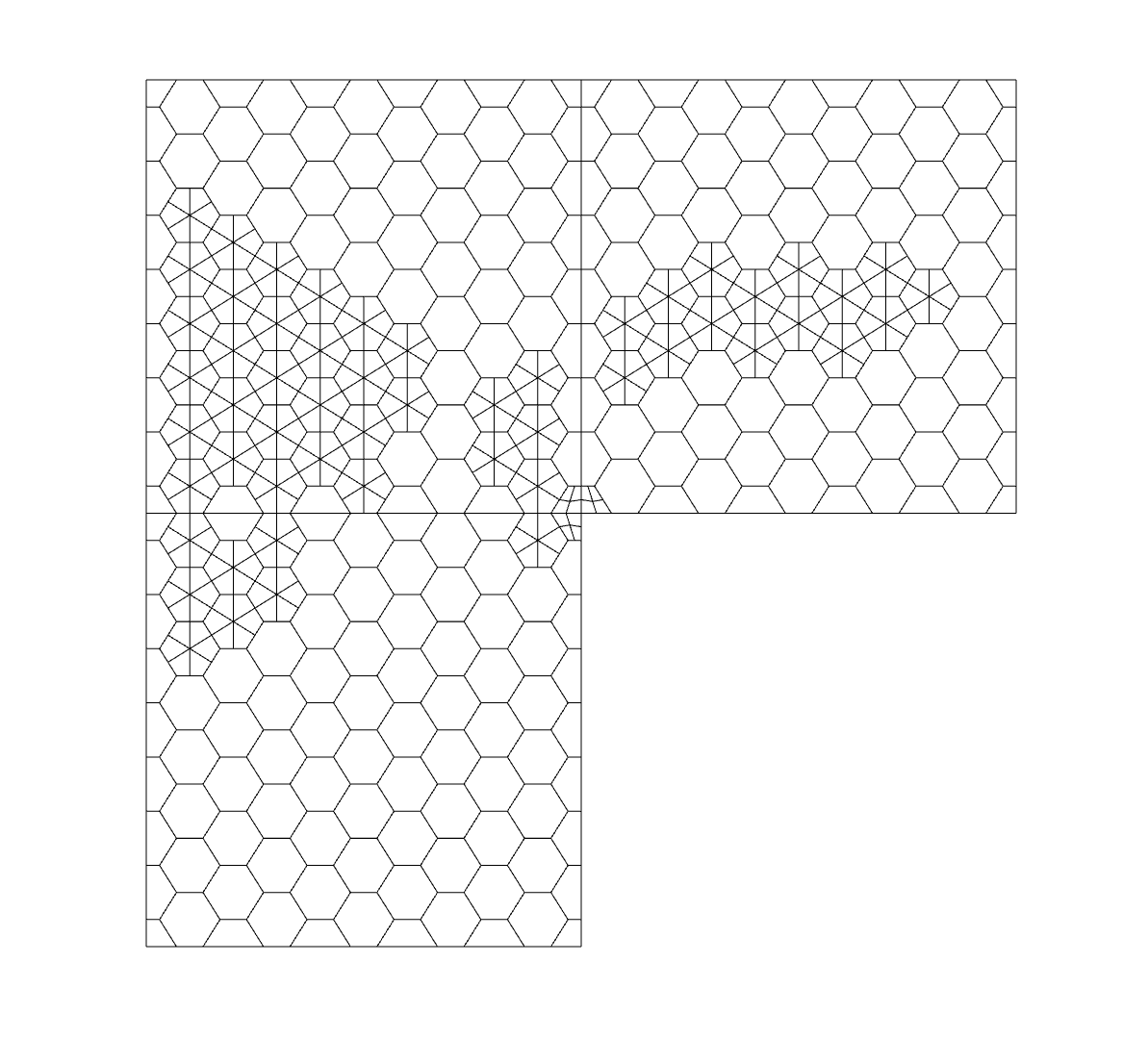}
\end{minipage}
\begin{minipage}{4.2cm}
\centering\includegraphics[height=4.1cm, width=4.1cm]{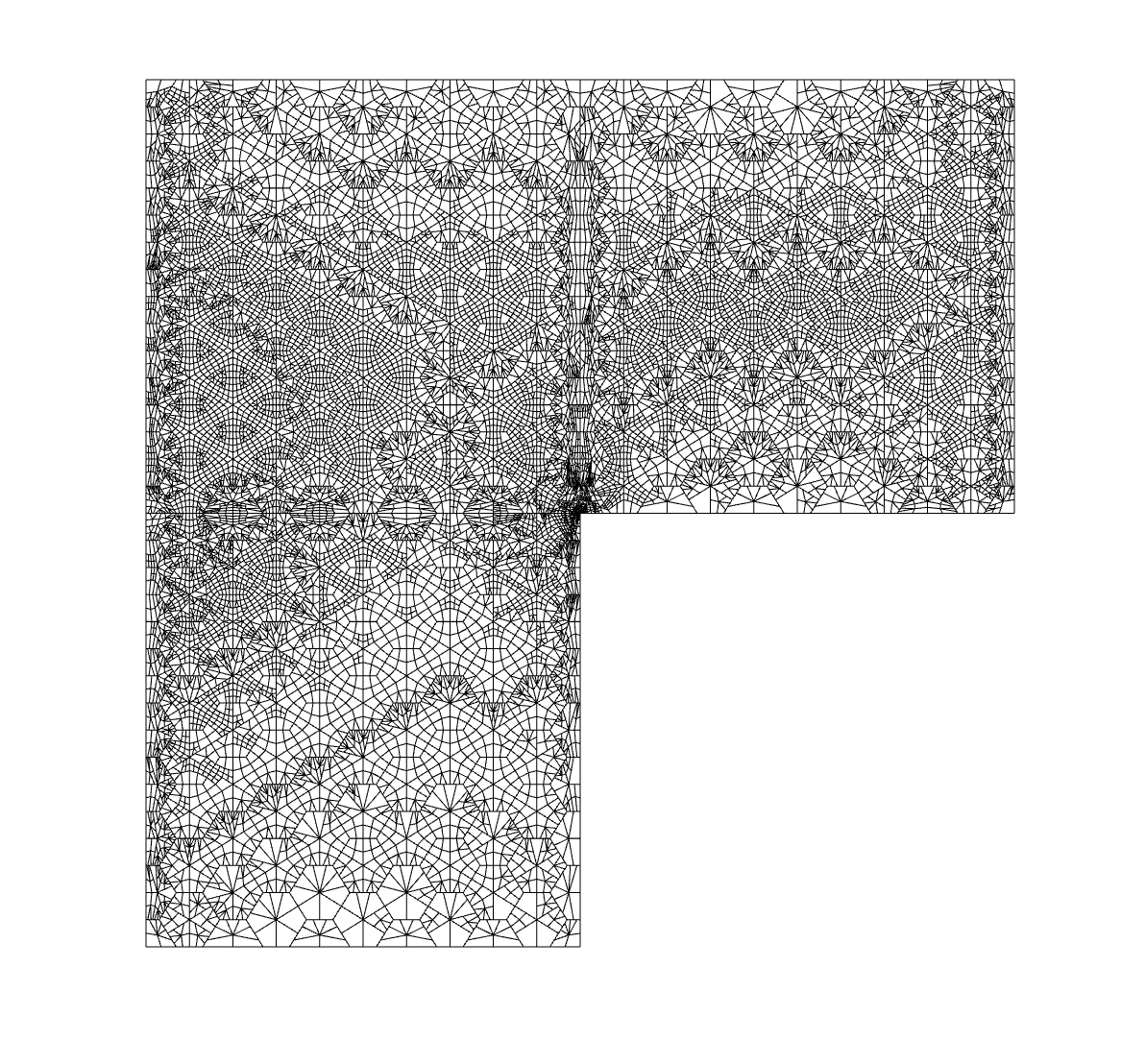}
\end{minipage}
\end{center}
\end{figure}

 \begin{figure}[h!]
\begin{center}
\caption{\label{fig:adaptiveVEMV} Test 1: Adaptively refined meshes obtained with VEM scheme at refinement steps 0, 1 and 8 initiated with an voronoi mesh (Adaptive VEMV).}
\begin{minipage}{4.2cm}
\centering\includegraphics[height=4.1cm, width=4.1cm]{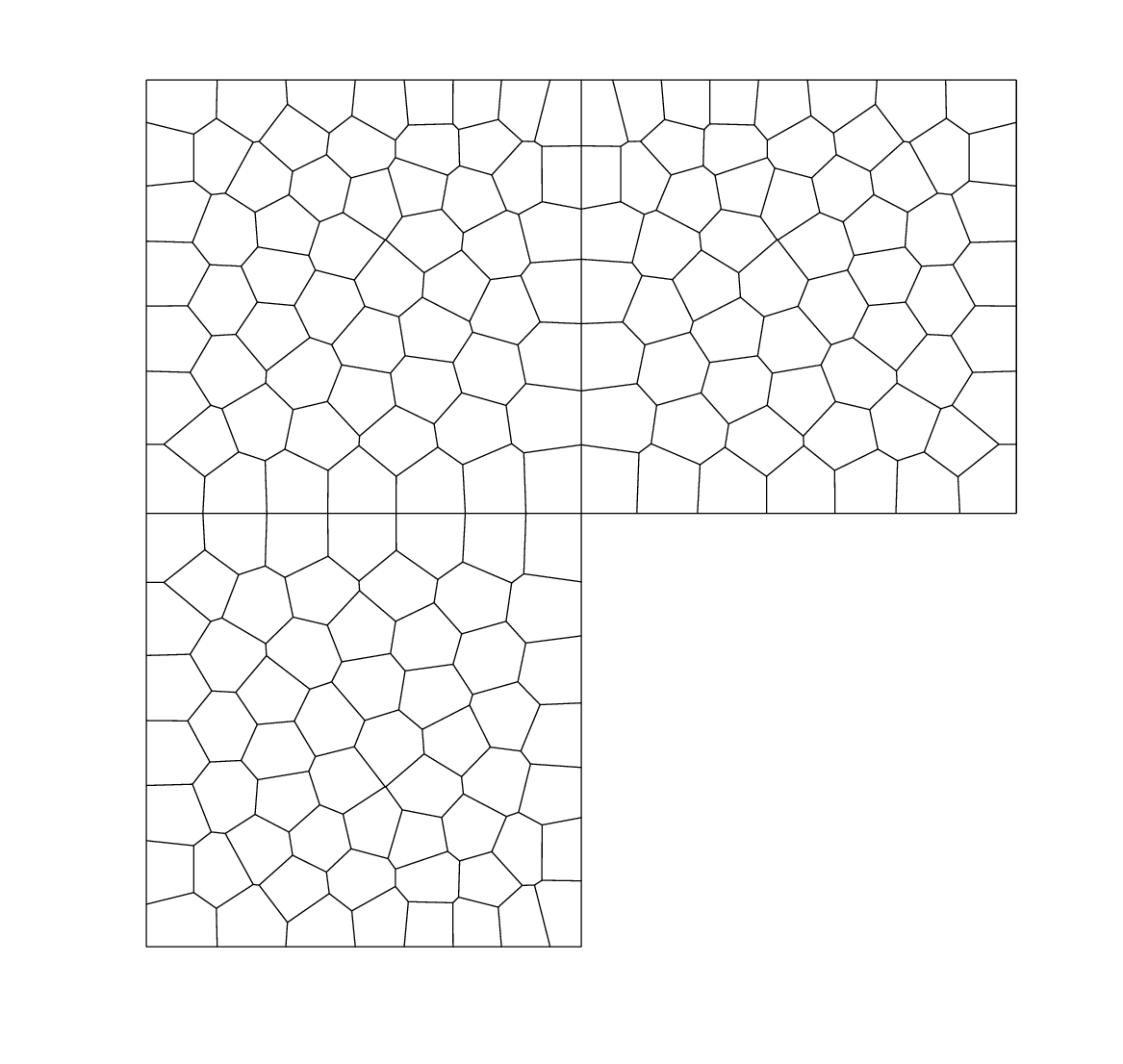}
\end{minipage}
\begin{minipage}{4.2cm}
\centering\includegraphics[height=4.1cm, width=4.1cm]{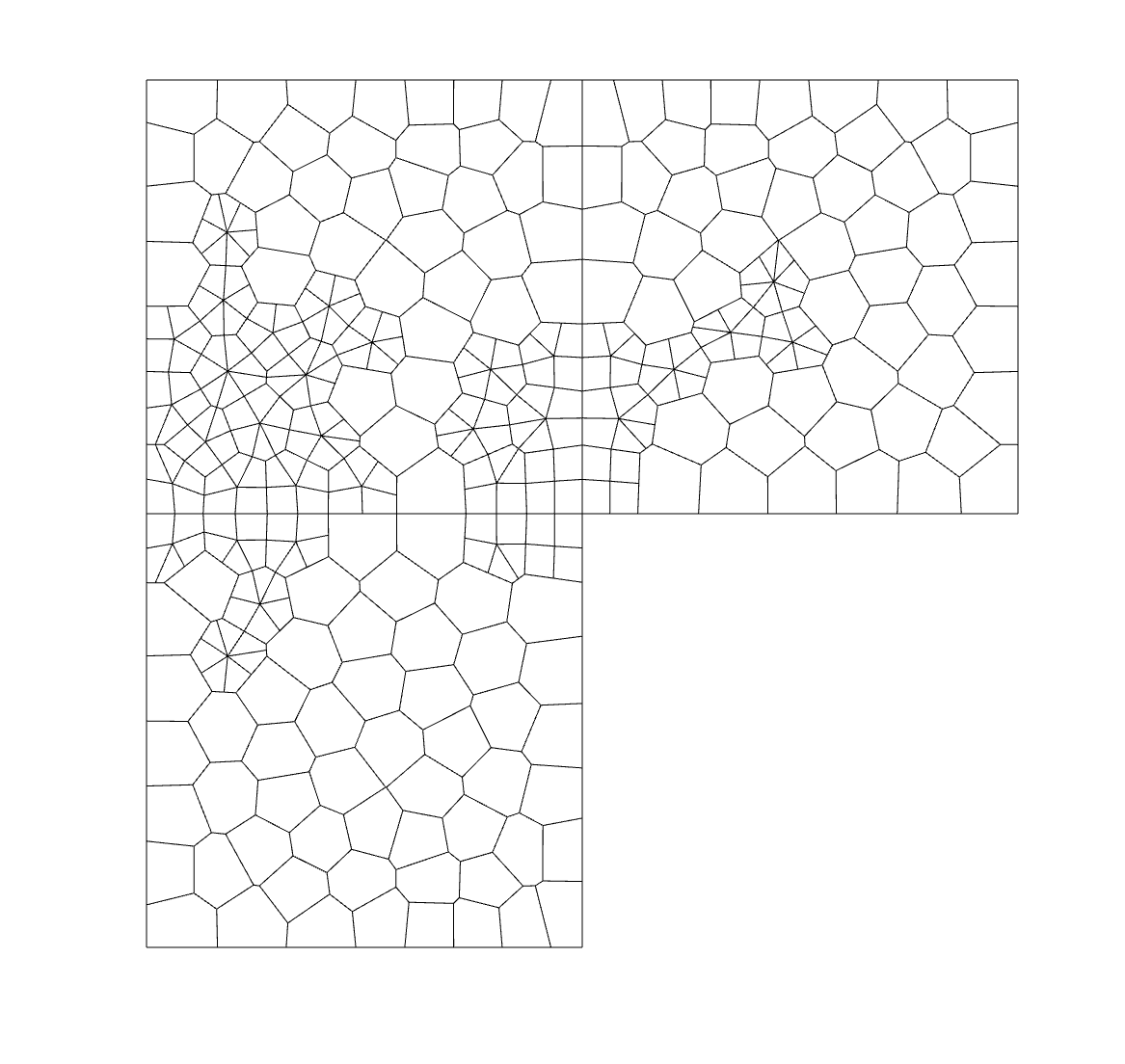}
\end{minipage}
\begin{minipage}{4.2cm}
\centering\includegraphics[height=4.1cm, width=4.1cm]{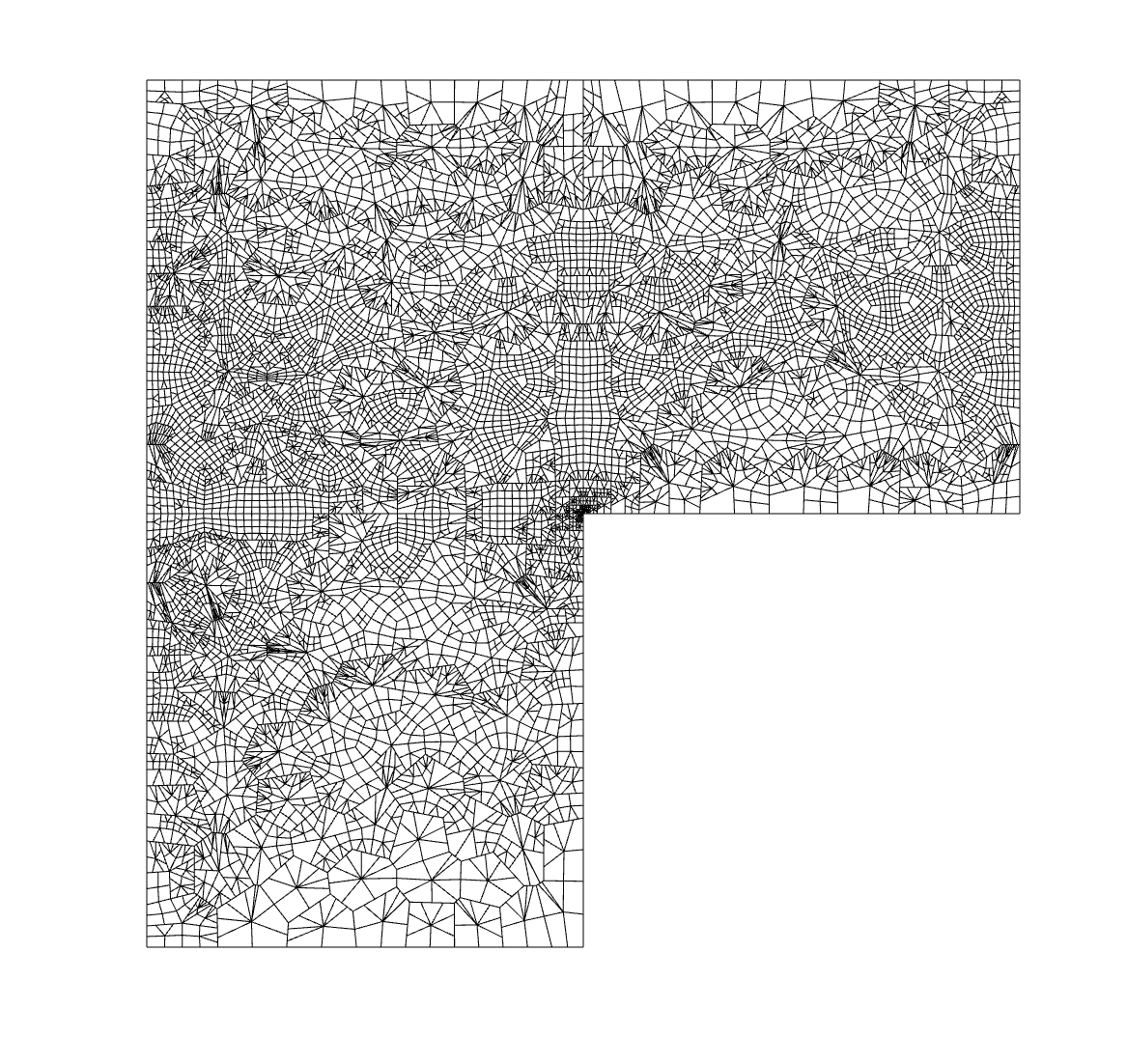}
\end{minipage}
\end{center}
\end{figure}
According to \cite{MR3133493} and as seen in Figure \ref{FIG:errorC}, using quasi-uniform meshes, the convergence rate for the eigenvalues should be $|\lambda_1-\lambda_{h,1}|=\mathcal{O}(h^{4/3})=\mathcal{O}(N^{-2/3})$ , where $N$ denotes the number of degrees of freedom. Then, the proposed a posteriori estimator  should be able to recover the optimal order 1 , when the adaptive refinement is performed near the singularity point. In Figure \ref{FIG:errorC} we present error curves where we observe that the uniform refinement leads to a convergence rate close to that predicted by the theory, while the adaptive VEM schemes allows us to recover the optimal order of convergence 1. Clearly, when we perform the refinements with triangles, the estimator works analogously as in then finite element method and it is clear that with triangles the geometrical singularities are identified in order to perform the necessary refinements.

 \begin{figure}[h!]
\begin{center}
\begin{minipage}{9.5cm}
\centering\includegraphics[height=8.9cm, width=9.0cm]{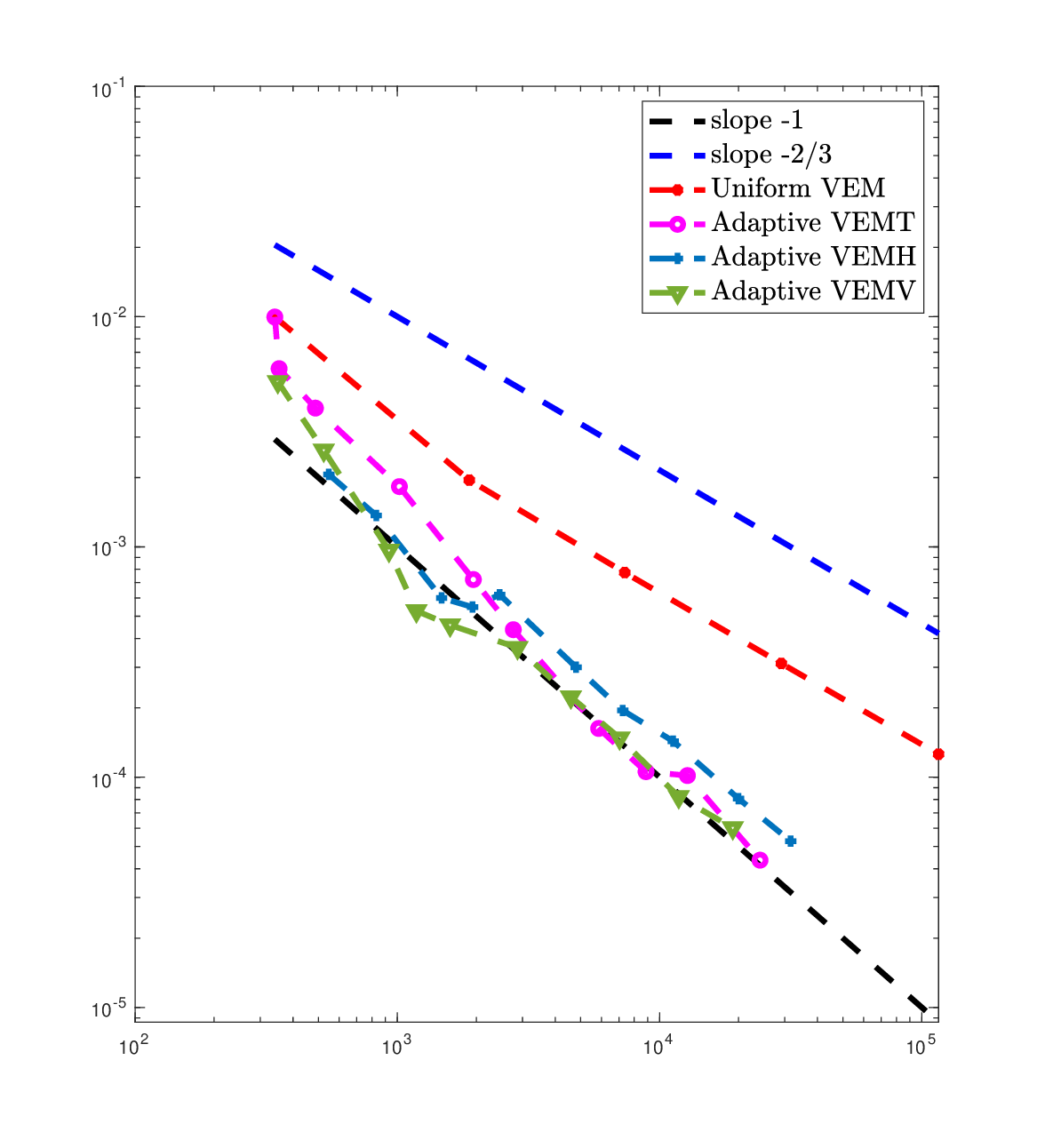}
\end{minipage}
\caption{Test 1. Error curves of $|\lambda_1-\lambda_{h,1}|$ for uniformly refined meshes (“Uniform VEM”), adaptively refined meshes for VEM with triangles (“Adaptive VEMT”), adaptively refined meshes for VEM with hexagons (“Adaptive VEMH”) and  adaptively refined meshes for VEM with voronoi (“Adaptive VEMV”).}
\label{FIG:errorC}
\end{center}
\end{figure}
We report in Table \ref{TABLA:3}, the estimators  $\boldsymbol{\eta}^2$ and  the effectivity indexes  $eff(\boldsymbol{\eta}):=\dfrac{|\l_{1}-\l_{h1}|}{\boldsymbol{\eta}^2}$ at each step of the adaptive VEM scheme. We include in the table the terms $R^2:=\sum_{E\in\CT_h}R_E^2$ which arise from the volumetric residuals, $\boldsymbol{\Theta}^{2}:=\sum_{E\in\CT_h}\Theta_{E}^{2}.$      which arise from the inconsistency of the VEM,  and $\boldsymbol{J}_h^{2}:=\sum_{E\in\CT_h}\left\{\sum_{\ell\in\CT_h}J_{\ell}^2\right\}$  which arise from the edge residuals. In the same way, Table \ref{TABLA:4} shows the analogous to Table \ref{TABLA:3}, but associated to the dual estimator.

\begin{table}[h!]
\begin{center}
\caption{Components of the error estimator and effectivity indexes on the adaptively refined meshes with VEMH.}
\resizebox{13cm}{!}{
\begin{tabular}{|c|c|c|c|c|c|c|c|c|}
\hline
$N$   & $\l_{h1}$ &  $R^2$   & $\boldsymbol{\Theta}^{2}$ & $\boldsymbol{J}_h^{2}$ &  $\boldsymbol{\eta}^2$ & $eff(\boldsymbol{\eta})$ \\
\hline
548  & 11.914   &2.0790e+00  & 9.0613e-02 &  3.9921e+00 &  6.1617e+00  & 3.3487e-04\\
   831&   11.906 &  1.4377e+00 &  7.0345e-02 &  2.5901e+00 &  4.0982e+00  & 3.3412e-04\\
   1475  & 11.897 &  4.8131e-01 &  3.4615e-02  & 9.7827e-01 &  1.4942e+00  & 4.0159e-04\\
   1935 & 11.896 &  3.4110e-01  & 2.2026e-02  & 6.7937e-01  & 1.0425e+00 &  5.2545e-04\\
   2456  & 11.897 &  3.1849e-01 &  1.7953e-02 &  5.9370e-01 &  9.3015e-01 &  6.6679e-04\\
   4802  & 11.893 &  1.5318e-01 &  1.1243e-02  & 3.0325e-01 &  4.6767e-01 &  6.4188e-04\\
   7236  & 11.892 &  9.6737e-02 &  6.9575e-03  & 1.9345e-01&   2.9714e-01&   6.5600e-04\\
   11269  & 11.891 &  7.0837e-02 &  4.9896e-03  & 1.3640e-01&   2.1223e-01&   6.7467e-04\\
   19984  & 11.891  & 3.8875e-02 &  3.4400e-03  & 7.8216e-02 &  1.2053e-01 &  6.6959e-04\\
   31615  & 11.890 &  2.3515e-02 &  2.2224e-03  & 4.9141e-02 &  7.4879e-02 &  7.0400e-04\\\hline

\end{tabular}}
\label{TABLA:3}
\end{center}
\end{table}
From Tables \ref{TABLA:3} and \ref{TABLA:4}  we observe that the effectivity indexes are bounded and far from zero. Also, the volumetric and edge residual terms are, roughly speaking, of the same order, none of them being asymptotically negliglible. 
\begin{table}[h!]
\begin{center}
\caption{Components of the error estimator and effectivity indexes on the adaptively refined meshes with VEMH.}
\resizebox{13cm}{!}{
\begin{tabular}{|c|c|c|c|c|c|c|c|c|}
\hline
$N$   & $\l_{h1}$ &  $R^{*2}$   & $\boldsymbol{\Theta^{*}}^{2}$ & $\boldsymbol{J}_h^{*2}$ &  $\boldsymbol{\eta}^{*2}$ & $eff(\boldsymbol{\eta}^*)$ \\
\hline
 548  & 11.914 &  2.5045e+00   &8.1576e-02  & 3.8235e+00&   6.4095e+00&   3.2193e-04\\
   831 &  11.906 &  1.3928e+00 &  5.2504e-02 &  1.8218e+00 &  3.2671e+00 &  4.1911e-04\\
   1475 &  11.897 &  5.8009e-01 &  2.8127e-02  & 8.9549e-01 &  1.5037e+00 &  3.9905e-04\\
   1935 &  11.896 &  4.2274e-01 &  1.8788e-02 &  6.9178e-01 &  1.1333e+00&   4.8334e-04\\
   2456 &  11.897 &  3.4216e-01 &  1.4299e-02 &  5.0855e-01 &  8.6501e-01 &  7.1700e-04\\
   4802 &  11.893 &  1.7841e-01&   8.2262e-03 &  2.7103e-01 &  4.5767e-01 &  6.5591e-04\\
   7236 &  11.892 &  1.1455e-01&   5.9439e-03 &  1.9062e-01 &  3.1112e-01 &  6.2652e-04\\
   11269 &  11.891 &  7.9144e-02 &  3.9980e-03  & 1.2025e-01 &  2.0339e-01 &  7.0399e-04\\
   19984&   11.891 &  4.4065e-02&   2.6448e-03  & 6.8238e-02 &  1.1495e-01&   7.0211e-04\\
   31615&   11.890 &  2.7508e-02&   1.7239e-03 &  4.5678e-02 &  7.4909e-02&   7.0372e-04\\\hline
\end{tabular}}
\label{TABLA:4}
\end{center}
\end{table}

Finally, Figure \ref{FIG:eigenfunction} shows the eigenfunctions  corresponding to the first lowest eigenvalue for the primal and dual problem at different levels of refinement.
 \begin{figure}[h!]
\begin{center}
\begin{minipage}{5.2cm}
\centering\includegraphics[height=5.1cm, width=5.1cm]{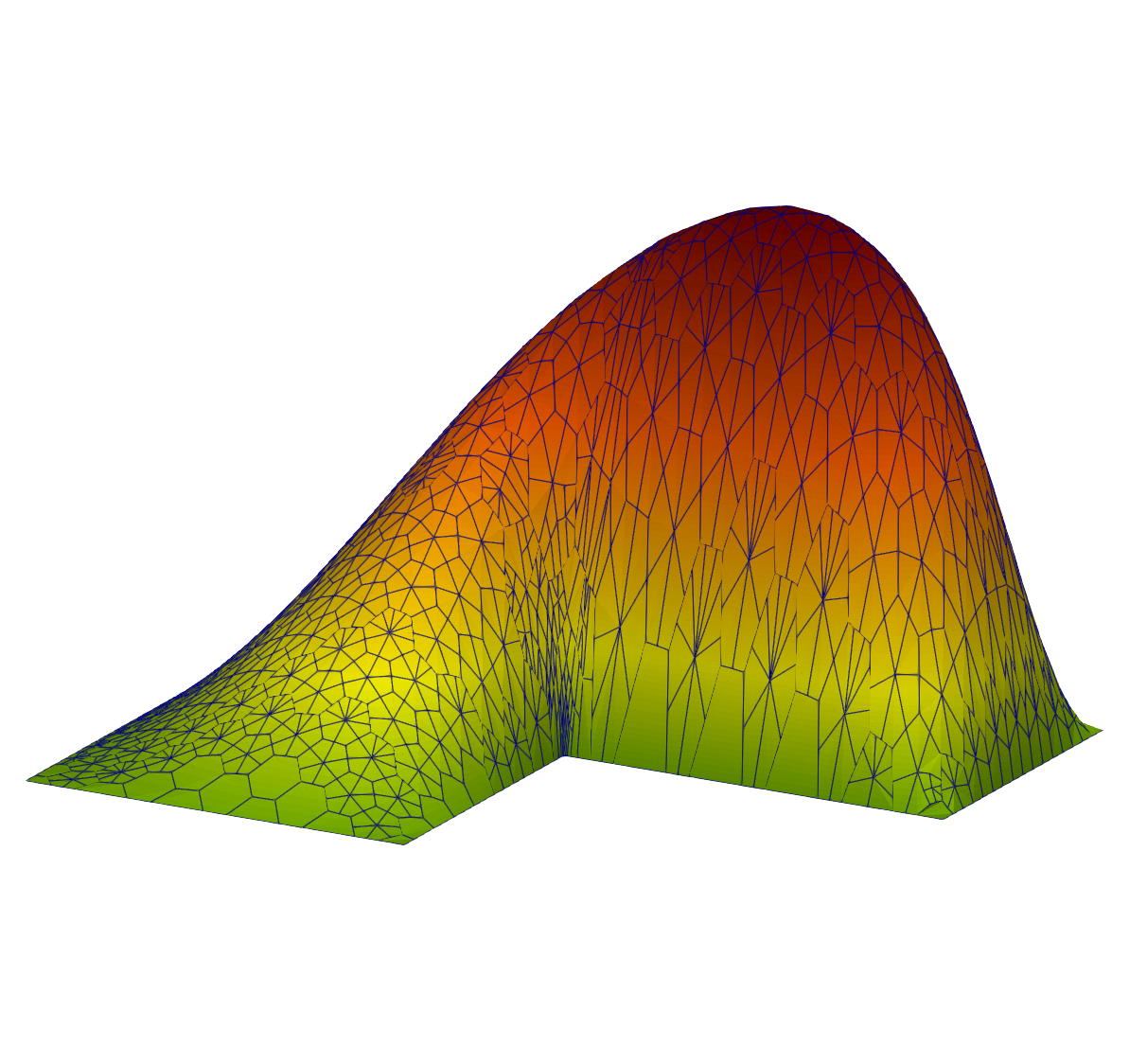}
\end{minipage}
\begin{minipage}{5.2cm}
\centering\includegraphics[height=5.1cm, width=5.1cm]{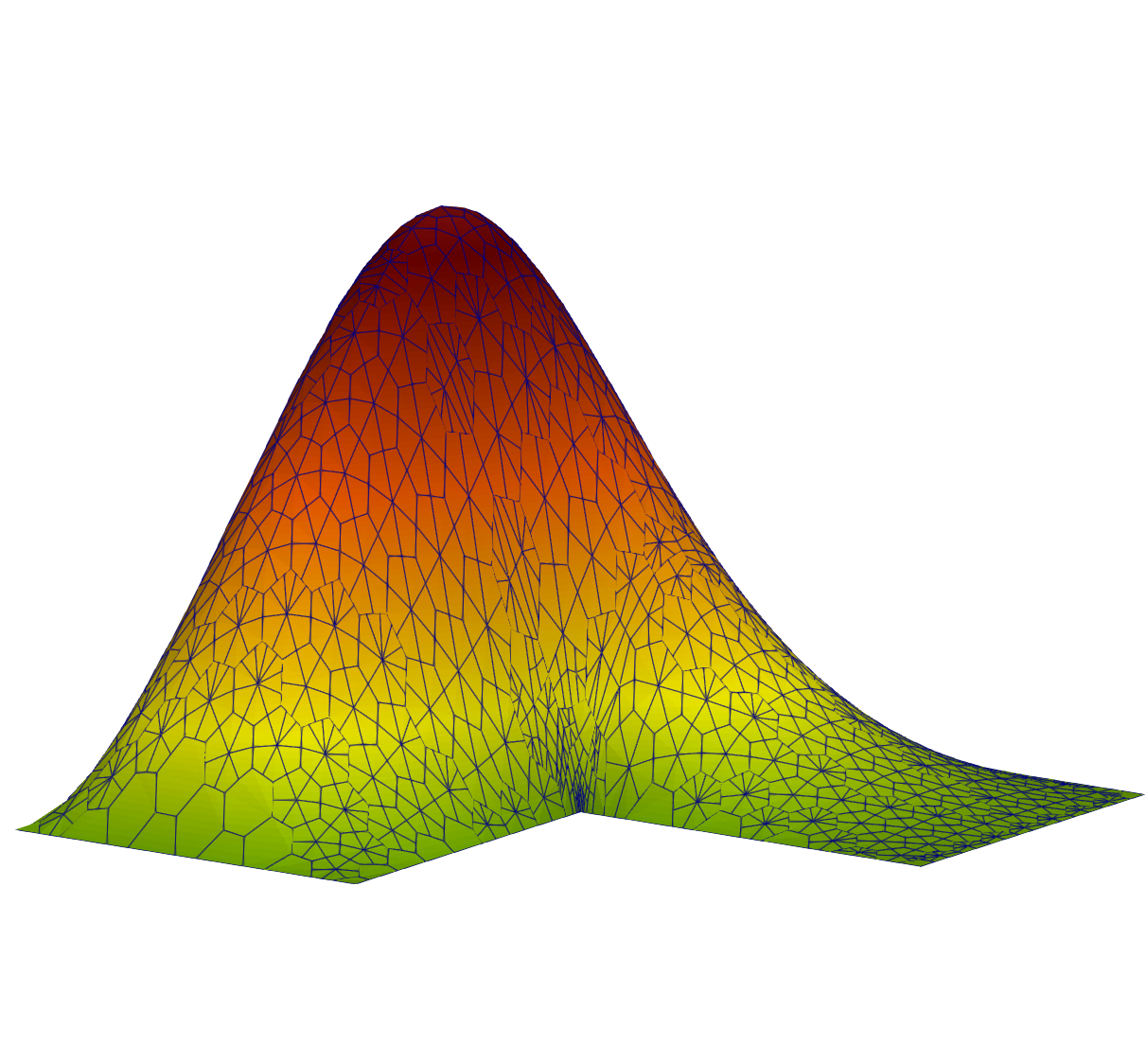}
\end{minipage}
\caption{Test 1. First eigenfunction for the primal problem  (left) and the dual problem (right).}
\label{FIG:eigenfunction}
\end{center}
\end{figure}

\subsection{Test 2: H-shaped domain}
For the following tests, we consider a two dimensional domain with for geometrical singularities,  which we call  the H-shaped domain. This domain may represent, for instance, the union of two pools containing fluids. Let us represent the H-shaped domain by $\Omega$ and its geometry is defined by 
More precisely, the geometry of this domain is given by
\begin{equation*}
\Omega:=\big\{ (0,3/2)\times(0,3) \big\} \setminus \big\{ \{ [1/2,1]\times [0,5/4]  \} \cup \{ [1/2,1]\times [15/8,3] \}  \big\}.
\end{equation*}
As in the L-shaped domain, the presence of four singularities  will lead, once again, to singular eigenfunctions with non sufficient regularity which is reflected on the convergence order on the computed a priori estimates. Hence, our proposed estimators must be capable of 
identify these singularities of the geometry and perform and adaptive refinement,
with different polygonal meshes, in order to recover optimal order of convergence. In the forthcoming snapshots we present the adaptive refinement of our estimator using different meshes.

\begin{figure}[h!]
\begin{center}
\begin{minipage}{4.2cm}
\centering\includegraphics[height=5.1cm, width=4.1cm]{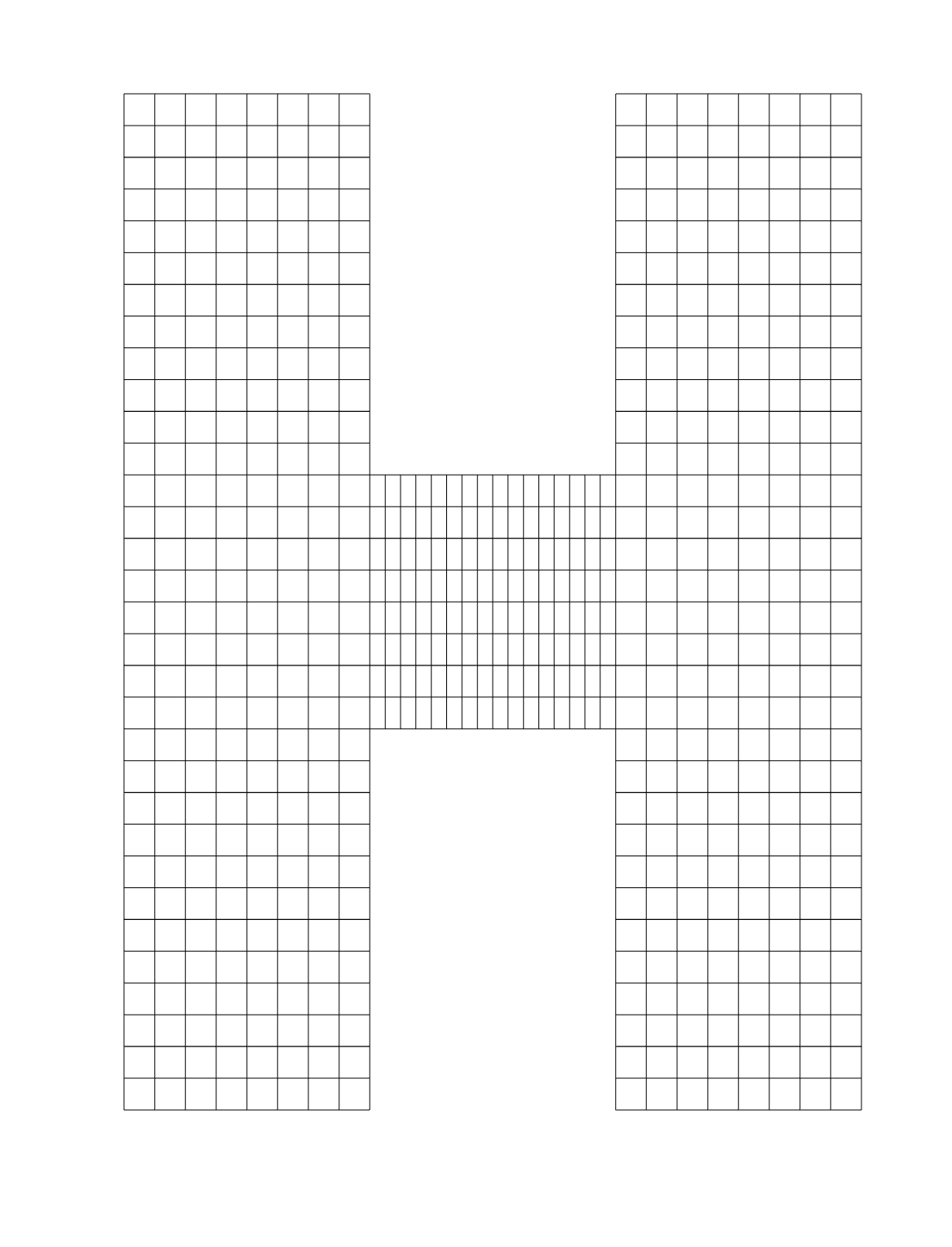}
\end{minipage}
\begin{minipage}{4.2cm}
\centering\includegraphics[height=5.1cm, width=4.1cm]{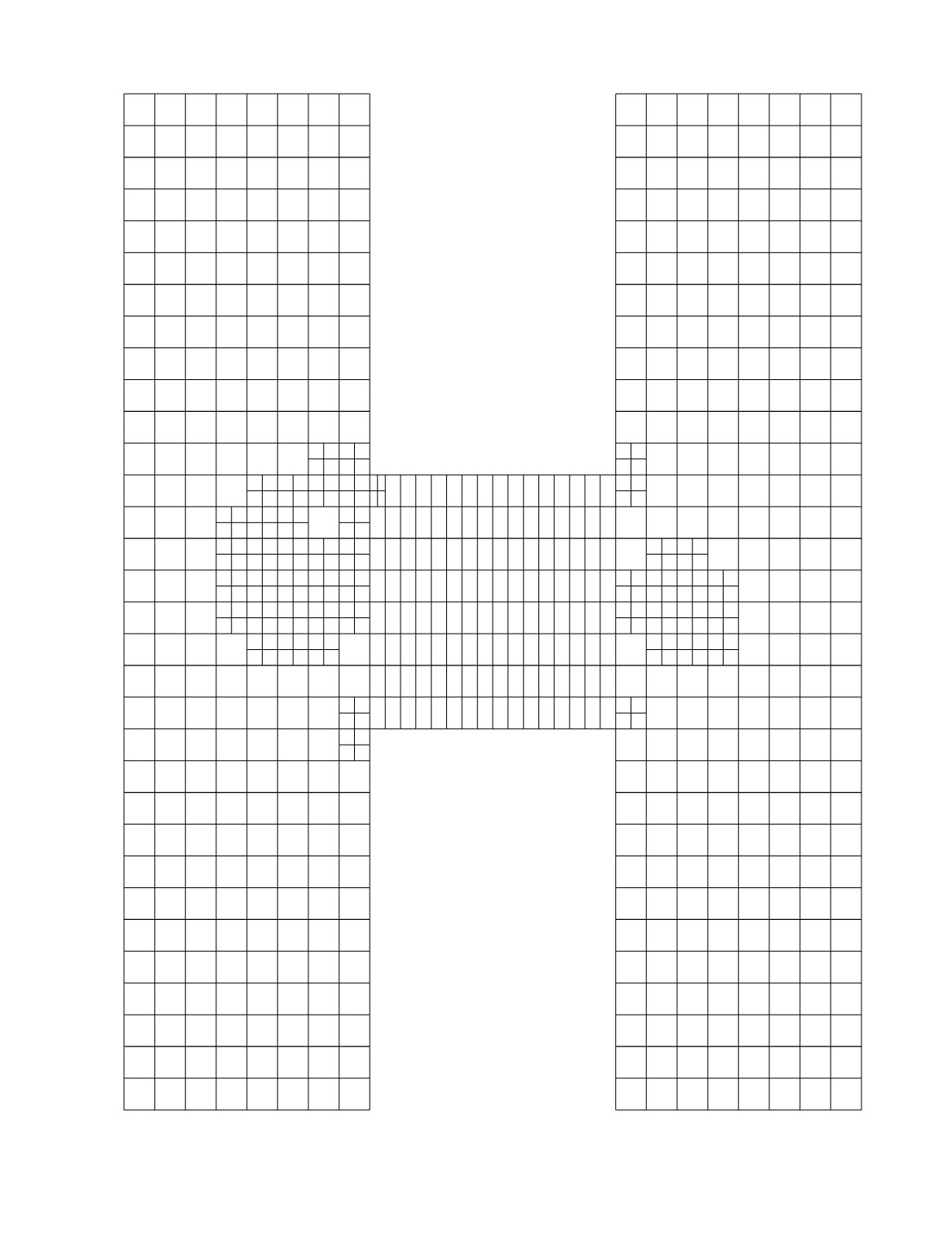}
\end{minipage}
\begin{minipage}{4.2cm}
\centering\includegraphics[height=5.1cm, width=4.1cm]{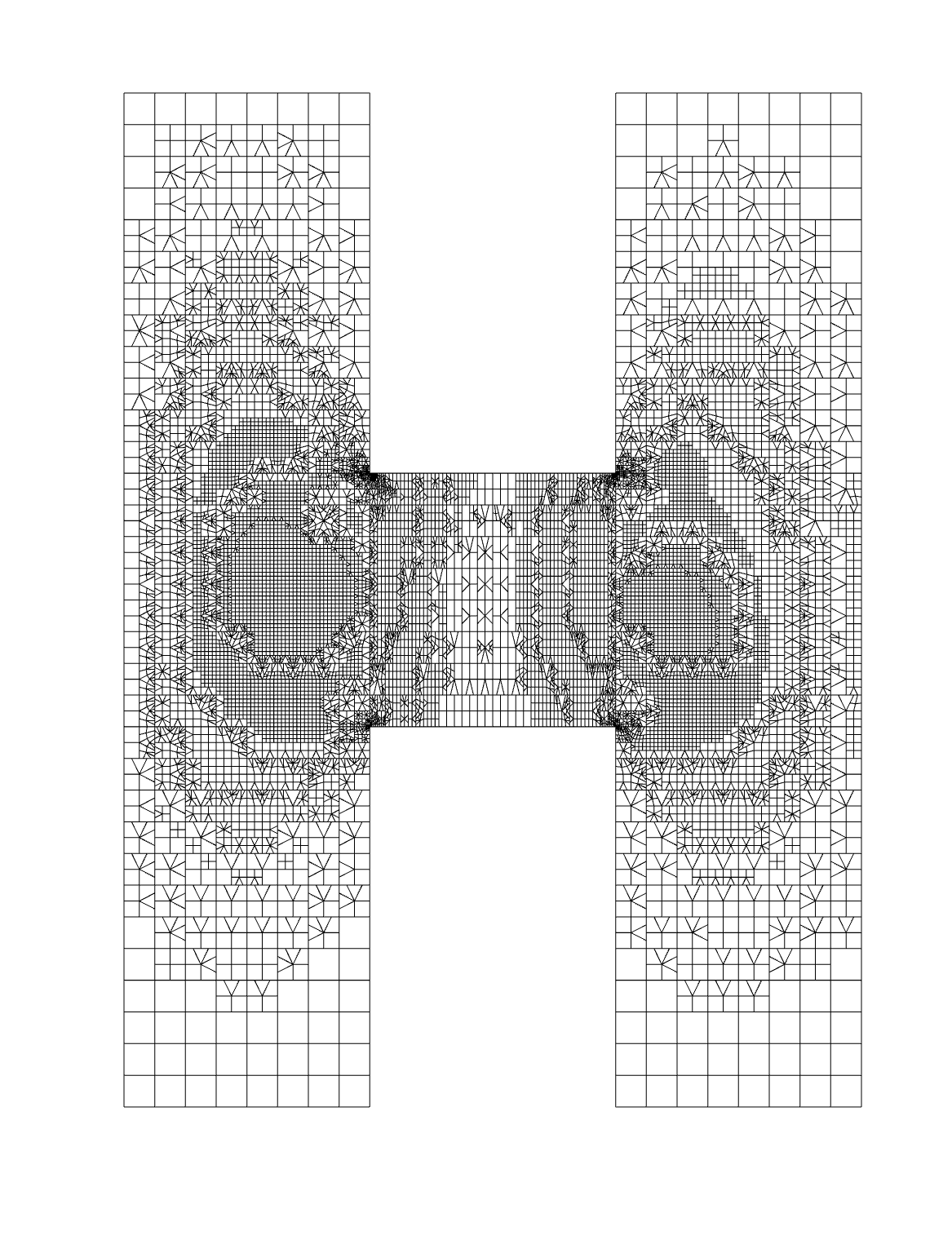}
\end{minipage}
\caption{\label{fig:adaptiveVEMHSQ} Test 2: Adaptively refined meshes obtained with VEM scheme at refinement steps 0, 1 and 8 initiated with a square mesh (Adaptive VEMSQ).}
\end{center}
\end{figure}

\begin{figure}[h!]
\begin{center}
\begin{minipage}{4.2cm}
\centering\includegraphics[height=5.1cm, width=4.1cm]{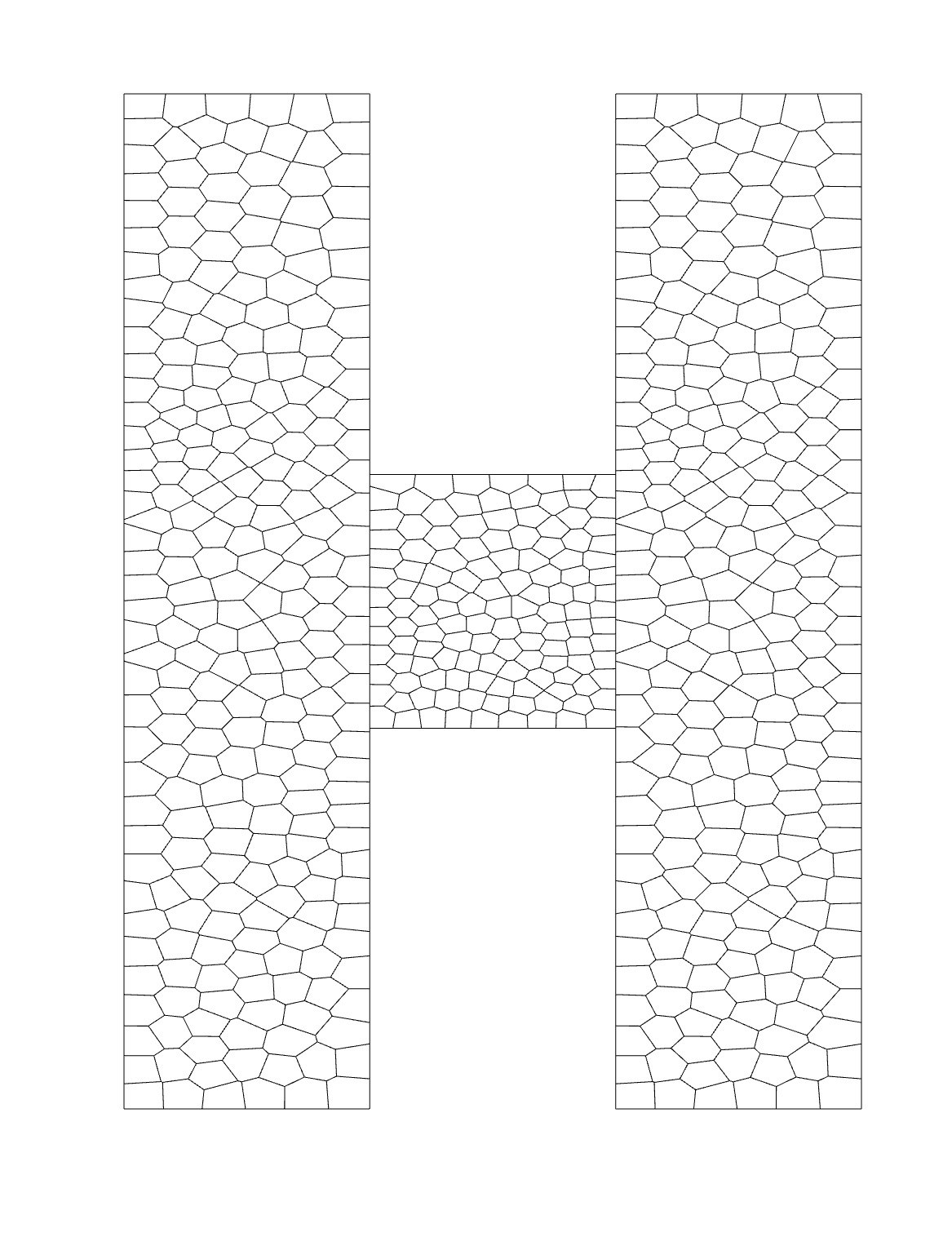}
\end{minipage}
\begin{minipage}{4.2cm}
\centering\includegraphics[height=5.1cm, width=4.1cm]{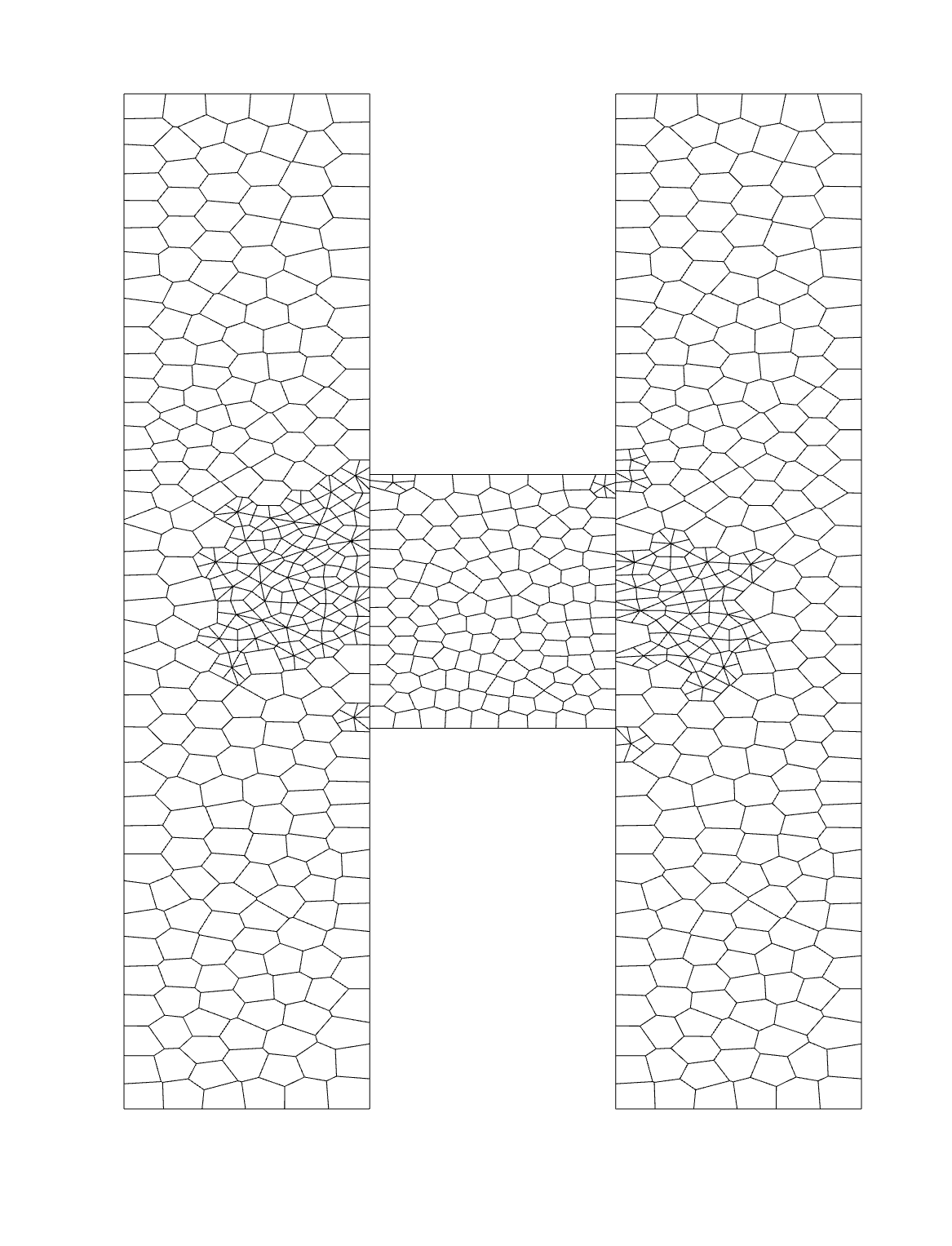}
\end{minipage}
\begin{minipage}{4.2cm}
\centering\includegraphics[height=5.1cm, width=4.1cm]{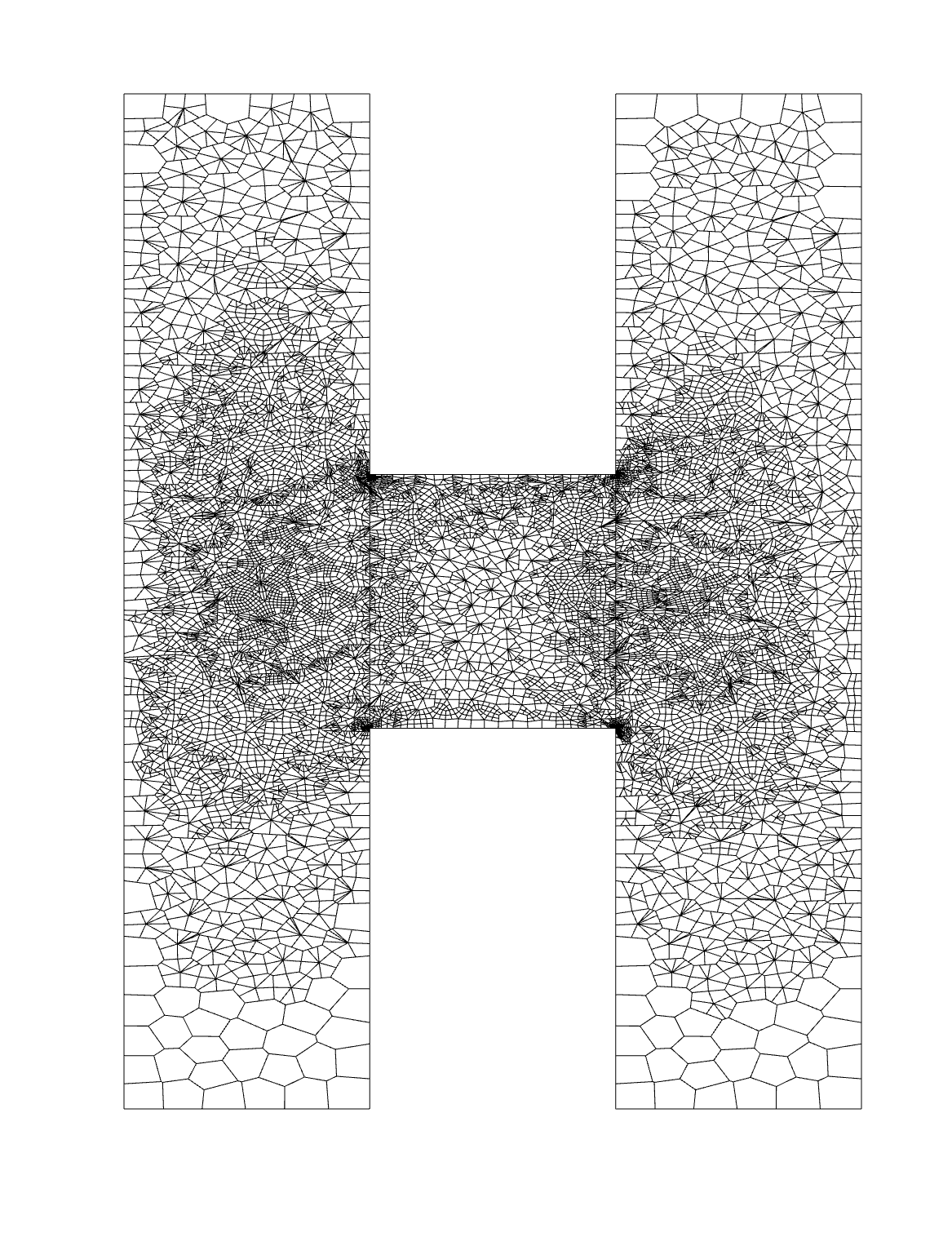}
\end{minipage}
\caption{\label{fig:adaptiveVEMHV} Test 2: Adaptively refined meshes obtained with VEM scheme at refinement steps 0, 1 and 8 initiated with a voronoi mesh (Adaptive VEMV).}
\end{center}
\end{figure}

From Figures \ref{fig:adaptiveVEMHSQ} and \ref{fig:adaptiveVEMHV}  we are able to observe that the refinements are precisely as we expect. On the other hand when polygonal meshes are considered, the estimator works perfectly when the refinements begin, in particular   when squares and voronoi meshes are considered as we observe on Figures \ref{fig:adaptiveVEMHSQ} and \ref{fig:adaptiveVEMHV} .

In Tables \ref{TABLA:11} and \ref{TABLA:12} we present the same information as in Tables \ref{TABLA:3} and \ref{TABLA:4} for this test. Similar conclusions to those of the previous test can be drawn from these tables.

\begin{table}[h!]
\begin{center}
\caption{Components of the error estimator and effectivity indexes on the adaptively refined meshes with VEMV.}
\resizebox{13cm}{!}{
\begin{tabular}{|c|c|c|c|c|c|c|c|c|}
\hline
$N$   & $\l_{h1}$ &  $R^2$   & $\boldsymbol{\Theta}^{2}$ & $\boldsymbol{J}_h^{2}$ &  $\boldsymbol{\eta}^2$ & $eff(\boldsymbol{\eta})$ \\
\hline
 1286  & 19.748e+01&   4.3111e+00&   3.7424e-01&   4.5192e-02  & 4.7305e+00&   1.6444e-03\\
   1567  & 19.717e+01&   2.6490e+00&   2.7764e-01&   3.6748e-02  & 2.9633e+00&   1.6740e-03\\
   2307  & 19.692e+01  & 1.1729e+00  & 1.7112e-01   &2.3501e-02&   1.3675e+00&   2.1471e-03\\
   3041&   19.682e+01&   8.4430e-01&   1.2692e-01&   2.1386e-02   &9.9260e-01&   2.5003e-03\\
   4108  & 19.675e+01  & 6.3121e-01&   7.6930e-02  & 7.6026e-03&   7.1575e-01  & 2.7884e-03\\
   5920 &  19.664e+01&   3.9877e-01  & 5.4853e-02&   4.7708e-03  & 4.5839e-01&   3.0739e-03\\
   8338   &19.657e+01  & 2.6452e-01&   4.1996e-02  & 3.2092e-03 &  3.0972e-01  & 3.1830e-03\\
   10493 &  19.655e+01  & 2.0562e-01&   3.5491e-02  & 2.6566e-03 &  2.4377e-01  & 3.4499e-03\\
   16770&   19.651e+01 &  1.3207e-01  & 2.4200e-02 &  2.0860e-03   &1.5835e-01 &  3.1906e-03\\
   17141  & 19.651e+01  & 1.2889e-01 &  2.3274e-02 &  2.0754e-03  & 1.5424e-01  & 3.2878e-03\\\hline
   
\end{tabular}}
\label{TABLA:11}
\end{center}
\end{table}

\begin{table}[h!]
\begin{center}
\caption{Components of the error estimator and effectivity indexes on the adaptively refined meshes with VEMV.}
\resizebox{13cm}{!}{
\begin{tabular}{|c|c|c|c|c|c|c|c|c|}
\hline
$N$   & $\l_{h1}$ &  $R*^2$   & $\boldsymbol{\Theta^{*}}^{2}$ & $\boldsymbol{J}_h^{*2}$ &  $\boldsymbol{\eta}*^2$ & $eff(\boldsymbol{\eta}^{*})$ \\
\hline
1286 &  19.748&   4.1233e+00&   4.2172e-01&   8.2429e+00 &  1.2788e+01&   6.0830e-04\\
   1567&   19.717&   2.2769e+00&   2.7066e-01&   5.4508e+00&   7.9983e+00&   6.2019e-04\\
   2307  & 19.692  & 1.0692e+00&   1.5419e-01&   2.8333e+00&   4.0566e+00&   7.2381e-04\\
   3041&   19.682 &  7.9111e-01   &1.0782e-01   &2.0435e+00   &2.9424e+00   &8.4348e-04\\
   4108  & 19.675   &5.7602e-01&   7.0627e-02 &  1.4768e+00&   2.1234e+00&   9.3988e-04\\
   5920 &  19.664&   3.4994e-01  & 5.1348e-02   &1.0383e+00  & 1.4396e+00  & 9.7878e-04\\
   8338 &  19.657  & 2.3910e-01 &  3.8194e-02&   7.4518e-01&   1.0225e+00&   9.6419e-04\\
   10493 &  19.655  & 1.9210e-01 &  3.1877e-02&   6.1777e-01&   8.4175e-01 &  9.9905e-04\\
   16770   &19.651 &  1.1519e-01 &  2.0240e-02  & 3.9690e-01  & 5.3233e-01&   9.4912e-04\\
   17141   &19.651   &1.1363e-01   &1.9662e-02  & 3.9010e-01   &5.2339e-01  & 9.6888e-04\\\hline
 \end{tabular}}
\label{TABLA:12}
\end{center}
\end{table}
Figure \ref{FIG:errorH} shows a logarithmic plot of the errors between the calculated approximations of the second smallest positive eigenvalue and the `'exact" one, versus the number of degrees of freedom $N$ of the meshes. The exact value of the second eigenvalue is obtained by using the least squares fit. The figure shows the results obtained with “uniform” meshes and with adaptively refined meshes and shows how the optimal order of convergence is recovered.
 \begin{figure}[h!]
\begin{center}
\begin{minipage}{9.5cm}
\centering\includegraphics[height=8.9cm, width=9.0cm]{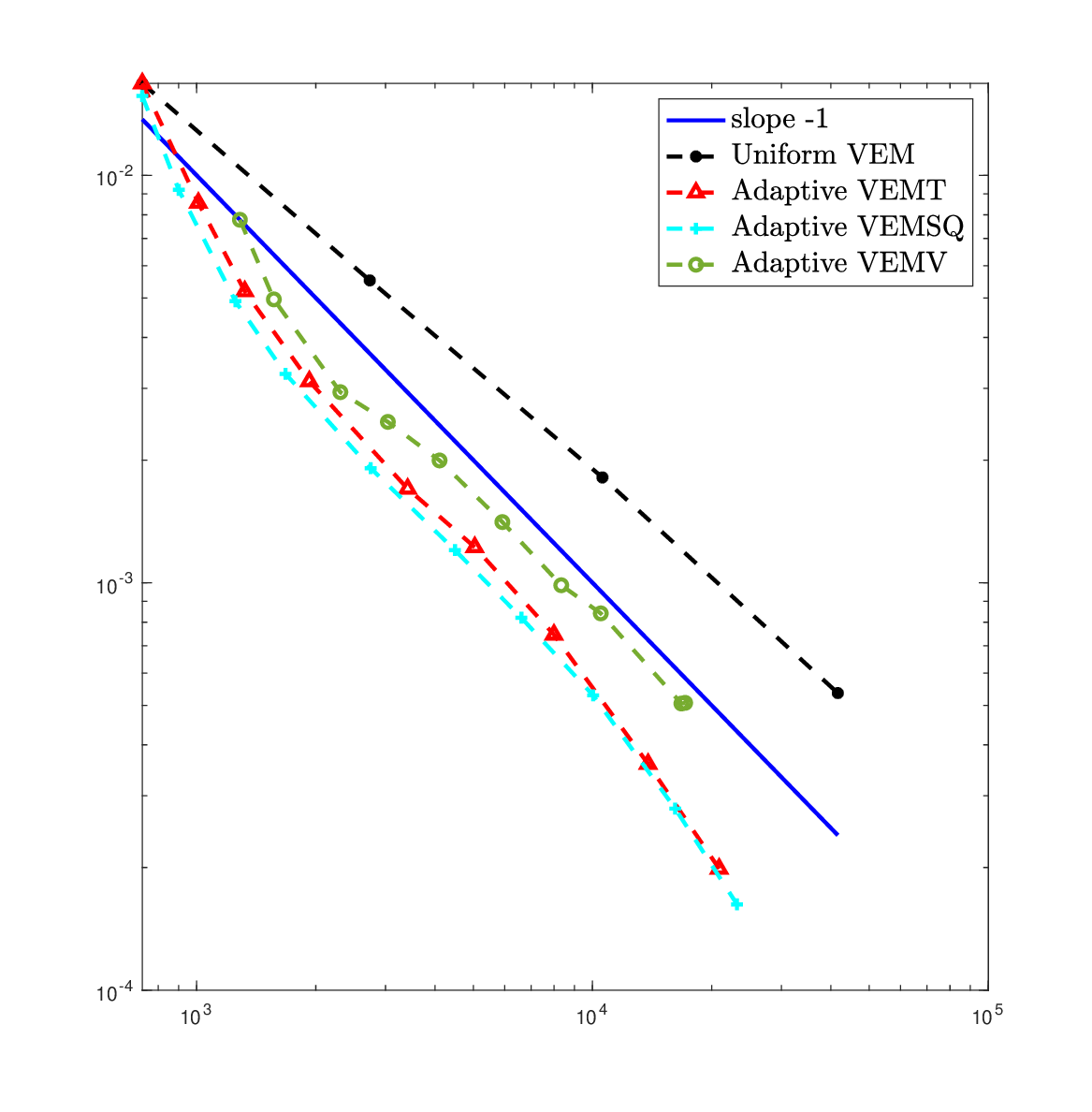}
\end{minipage}
\caption{Test 1. Error curves of $|\lambda_2-\lambda_{h,2}|$ for uniformly refined meshes (“Uniform VEM”), adaptively refined meshes for VEM  with triangles (“Adaptive VEMT”), adaptively refined meshes for VEM  with squares (“Adaptive VEMSQ”) and  adaptively refined meshes for VEM with voronoi (“Adaptive VEMV”).}
\label{FIG:errorH}
\end{center}
\end{figure}

We end the report of results related of the H-shaped domain with plots of the second computed eigenfunctions for the primal and dual eigenvalue problems. On this case, the eigenfunctions have been obtained with a voronoi mesh as we observe in Figure \ref{FIG:eigenfunctionH}. We notice the similarity between the solution of the primal and dual eigenvalue problems, where the singularities emerge due the geometrical definition of the domain.
\begin{figure}[h!]
\begin{center}
\begin{minipage}{6.2cm}
\centering\includegraphics[height=6.1cm, width=6.1cm]{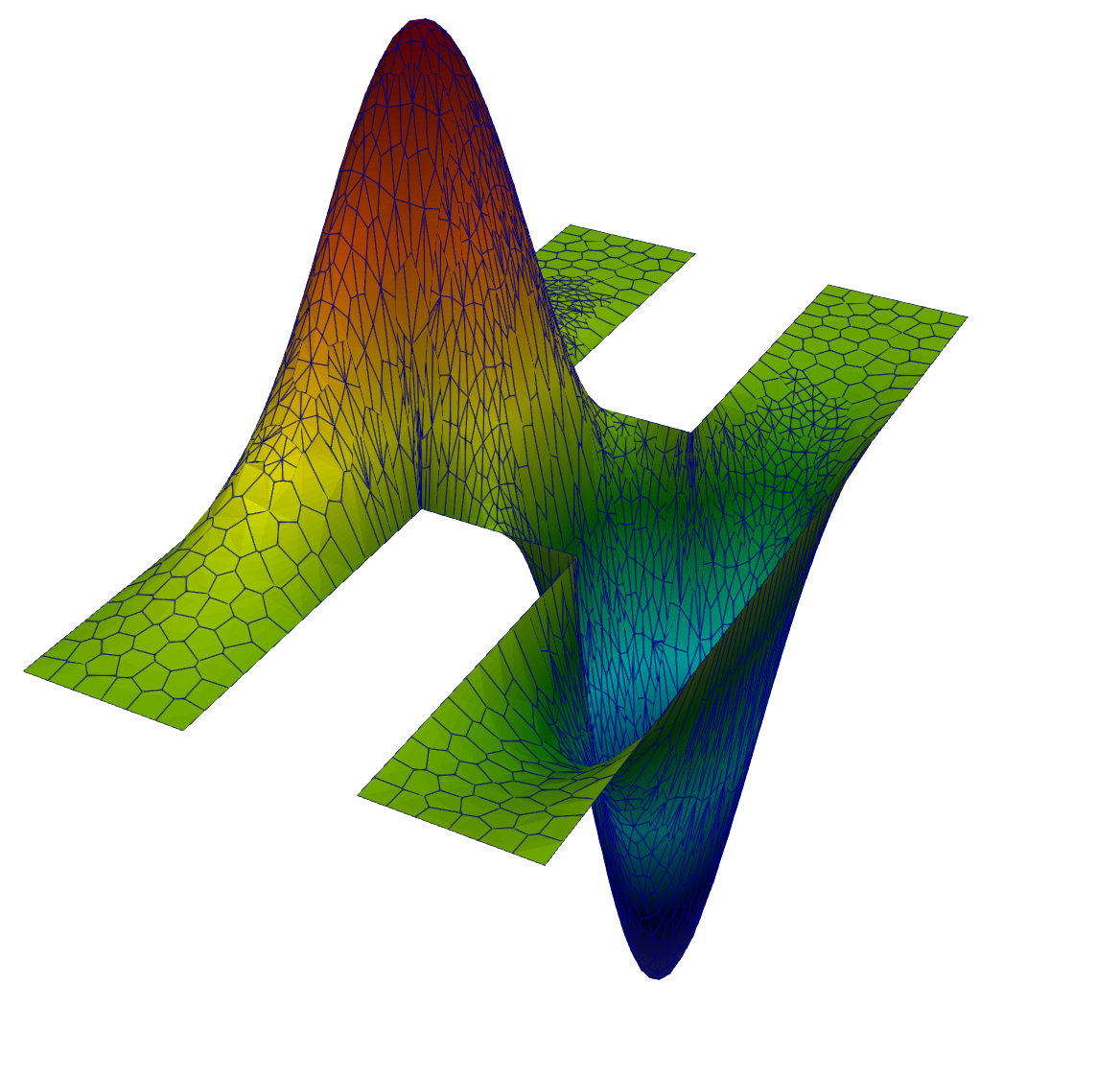}
\end{minipage}
\begin{minipage}{6.2cm}
\centering\includegraphics[height=6.1cm, width=6.1cm]{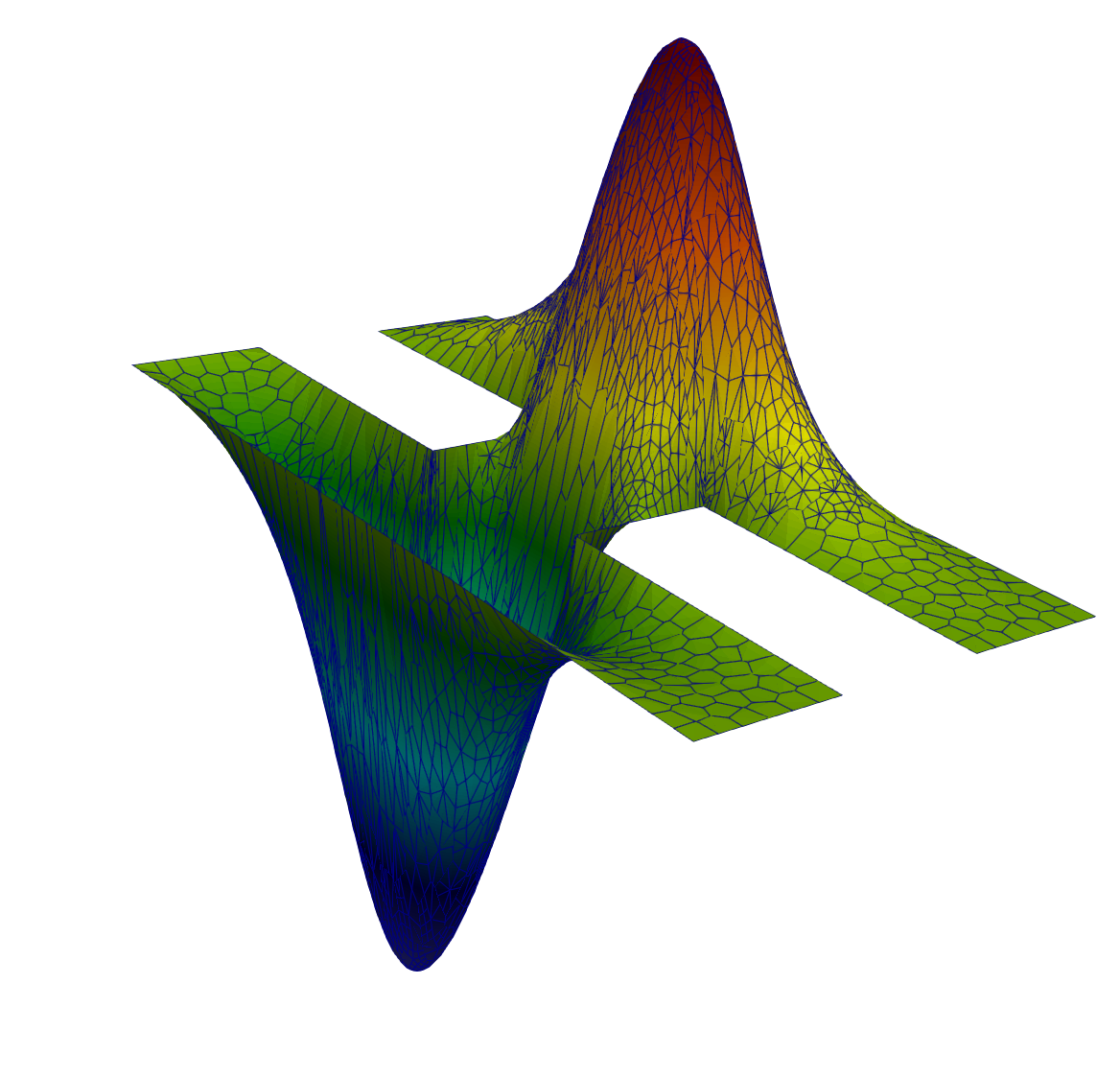}
\end{minipage}
\caption{Test 2. Second eigenfunction for the primal problem  (left) and the dual problem (right).}
\label{FIG:eigenfunctionH}
\end{center}
\end{figure}


\bibliographystyle{siamplain}
\bibliography{references}
\end{document}